\newtheorem{thm}{Theorem}[section]
\newtheorem{lemma}[thm]{Lemma}
\newtheorem{mlemma}[thm]{Main Theorem}
\newtheorem{mthm}[thm]{Main Theorem}
\newtheorem{cor}[thm]{Corollary}
\newtheorem{prop}[thm]{Proposition}
\newtheorem{conj}[thm]{Conjecture}
\newtheorem{op}[thm]{Open Problem}
\newtheorem{Example}[thm]{Example}
\def\sq{\square}
\def\aA{\mathbb A}
\def\zz{\mathbb Z}
\def\nn{\mathbb N}
\def\pp{\mathbb P}
\def\rr{\mathbb R}
\def\qqq{\mathbb Q}
\def\sm{\smallsetminus}
\def\Ga{\Gamma}
\def\la{\lambda}
\def\ga{\gamma}
\def\ep{\epsilon}
\def\al{\alpha}
\def\be{\beta}
\def\ve{\varepsilon}
\def\cN{\mathcal D}
\def\cQ{\mathcal Q}
\def\cR{\mathcal R}
\def\CR{\mathcal R}
\def\cf{\mathcal F}
\def\cF{\mathcal F}
\def\cB{\mathcal B}
\def\CT{\mathcal T}
\def\ssu{\subset}
\def\<{\langle}
\def\>{\rangle}
\def\Z{ {\text {\rm Z} } }
\def\scg{\textsc{G}}
\def\Ups{\Upsilon}
\def\.{\hskip.06cm}
\def\ts{\hskip.03cm}
\def\mts{\hspace{-.04cm}}
\def\0{{\mathbf 0}}
\def\nin{\noindent}
\def\Rven{{{\textsc{R}_{n+\varepsilon}}}}
\def\rR{{\textsc{R}}}
\def\CT{\Phi_T}
\def\ba{\mathbf{a}}
\def\bb{\mathbf{b}}
\def\bc{\mathbf{c}}
\def\bz{\mathbf{z}}
\def\pid{L}
\def\BT{B_T}
\def\tweight{{weight }}
\def\weight{\text{\rm weight}}
\def\mult{m}
\def\G{{\textmd{W}}}
\def\U{{\textmd{U}}}
\def\W{\G}
\def\hW{\G_0}
\def\Fpr{F_\circ}
\def\Gpr{G_\circ}
\def\Hpr{H_\circ}
\def\cBB{\mathcal B'}
\begin{document}

\title{Counting with irrational tiles}

\author[Scott Garrabrant]{\ Scott~Garrabrant$^\star$ \ \ }

\author[Igor~Pak]{\ \ Igor~Pak$^\star$}

\date{\today}

\thanks{\thinspace ${\hspace{-.45ex}}^\star$Department of Mathematics,
UCLA, Los Angeles, CA, 90095.
\hskip.06cm
Email:
\hskip.06cm
\texttt{\{coscott,pak\}@math.ucla.edu}}


\begin{abstract}
We introduce and study the number of tilings of unit height
rectangles with irrational tiles.  We prove that the class of
sequences of these numbers coincides with the class of
\emph{diagonals of~$\.\nn$-rational generating functions}
and a class of certain \emph{binomial multisums}.
We then give asymptotic applications and establish
connections to hypergeometric functions and
Catalan numbers. \end{abstract}

\maketitle

\bigskip

\section{Introduction}\label{s:intro}

\noindent
The study of combinatorial objects enumerated by rational generating
functions (GF) is classical and goes back to the foundation of
Combinatorial Theory.  Rather remarkably, this class includes a large
variety of combinatorial objects, from integer points in polytopes and
horizontally convex polyominoes, to magic squares and discordant permutations
(see e.g.~\cite{Sta,FS}).  Counting the number of tilings of a \emph{strip}
(rectangle $[k\times n]$ with a fixed height~$k$) is another popular
example in this class, going back to Golomb, see~\cite[$\S$7]{Gol}
(see also~\cite{BL,CCH,KM,MSV}).  The nature of GFs of such tilings
is by now completely understood (see Theorem~\ref{t:classical} below).

In this paper we present an unusual generalization to tile counting functions
with irrational tiles, of rectangles $[1\times (n + \ve)]$, where $\ve \in \rr$
is fixed.  This class of functions turns out to be very rich and interesting;
our main result (theorems~\ref{t:main} and~\ref{t:main-new} below) is a
complete characterization of these functions.
We then use this result to construct a number of tile counting functions useful
for applications.

\smallskip

Let us first illustrate the notion of tile counting functions with
several examples.  Start with Fibonacci number $F_n$ which count
the number of tilings of $[1\times n]$ with the set~$T$ of
two rectangles $[1\times 1]$ and $[1\times 2]$, see Figure~\ref{f:fib}.

\begin{figure}[hbt]
\begin{center}
\psfrag{1}{}
\psfrag{2}{}
\psfrag{n}{\small $[1\times 10]$}
\psfrag{T}{$T$}
\epsfig{file=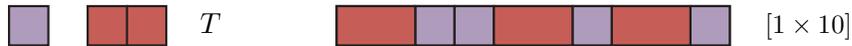,width=10.6cm}
\end{center}
\caption{Fibonacci tiles $T$ and a tiling of $[1\times 10]$.}
\label{f:fib}
\end{figure}

Consider now a more generic set of tiles as in Figure~\ref{f:binom},
where each tile has height~1 and rational side lengths.
Note that the dark shaded tiles here are \emph{bookends}, i.e.\ every tiling
of $[1\times n]$ must begin and end with one, and they are not allowed
to be in the middle.  Also, no reflections or rotations are allowed, only
parallel translations of the tiles.  We then have exactly
$f_T(n)=\binom{n-2}{2}$ tilings of $[1\times n]$, since the two light tiles
must be in this order and can be anywhere in the sequence of $(n-2)$ tiles.

\begin{figure}[hbt]
\begin{center}
\psfrag{m}{\small $[1\times 14]$}
\psfrag{T}{$T$}
\epsfig{file=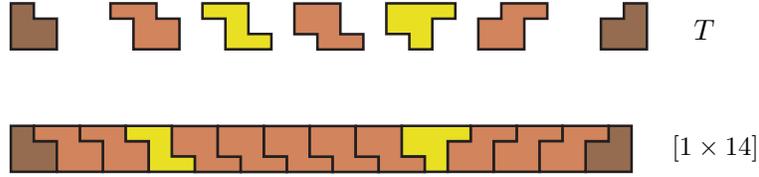,width=9.6cm}
\end{center}
\caption{Set $T$ of $5$ rational tiles and two bookends; a tiling
of $[1\times 14]$ with~$T$.}
\label{f:binom}
\end{figure}

More generally, let $f_T(n)$ be the number of tilings on $[1\times n]$ with
a fixed set of rational tiles of height~1 and two bookends as above.\footnote{For
simplicity, we allow bookends in~$T$ to be \emph{empty tiles}. In general,
bookends play the role of boundary
coloring for Wang tilings~\cite{Wang} \ts (cf.~\cite{GP,PY}). Note that
irrational tilings are agile enough not to require them at all.
This follows from our results, but the reader might enjoy
finding a direct argument.
}
Denote by $\cf_1$ the set of all such functions.  It is easy to see
via the  transfer-matrix method (see e.g.~\cite[$\S$4.7]{Sta}), that
the GF \. $F_T(x) = f(0) + f(1)x+f(2)x^2+\ldots$ \ts is rational:
$$
F_T(x) \. = \, \frac{P(x)}{Q(x)} \ \, \quad \text{for some} \quad P,Q \in \zz[x]\..
$$
In the two examples above, we have GFs \. $1/(1-x-x^2)$ \. and \.
$x^4/(1-x)^3$\ts, \ts respectively.

Note, however, that the combinatorial nature of~$f(n)$ adds further constraints
on possible GFs~$F_T(x)$. The following result gives a complete characterization
of such GFs.  Although never stated in this form, it is well known in a sense
that it follows easily from several existing results (see $\S$\ref{ss:fin-nn}
for references and details).

\begin{thm}\label{t:classical}
Function $f(n)$ is in $\cf_1$, i.e. equal to $f_T(n)$ for all $n\ge 1$ and
some rational set of tiles~$T$ as above,
if and only if its {\rm GF} \. $F(x) = f(0) + f(1)x+f(2)x^2+\ldots$ is $\nn$-rational.
\end{thm}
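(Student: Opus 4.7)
The plan is to prove the biconditional in two steps, using closure properties of $\nn$-rational series and a common rescaling device.

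For the forward direction, I would first rescale $T$ by a common denominator $d$ of its side lengths, so that every rescaled tile has positive integer length. A tiling of $[1\times n]$ by $T$ corresponds to a tiling of $[1\times dn]$ by the integer-length tile set $T' = dT$, so $f_T(n) = g(dn)$ where $g(m)$ is the tile count for $T'$. The GF of $g$ factors as $G(x) = B_L(x)\ts B_R(x)/(1 - T_\circ(x))$ with $B_L, B_R, T_\circ \in \nn[x]$ and $T_\circ(0) = 0$, hence is $\nn$-rational by Kleene closure under sum, product, and star of a proper series. The class of $\nn$-rational series in one variable is closed under $d$-sections (shown by grouping $d$ consecutive input symbols into a single super-letter in the associated $\nn$-weighted NFA while preserving non-negativity), so $F(x) = \sum_n g(dn)\ts x^n$ is $\nn$-rational.

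For the backward direction, let $F$ be $\nn$-rational. By the Kleene--Schützenberger theorem, $F(x) = \mathbf u^T (I - xM)^{-1}\mathbf v$ for $\nn$-vectors $\mathbf u, \mathbf v$ and an $\nn$-matrix $M$ on a finite state set $S$, so $[x^n]F$ is the weighted count of length-$n$ paths in the associated digraph $\Gamma$ from a source distribution $\mathbf u$ to a sink distribution $\mathbf v$. I would then realize this path count as a tile count. The natural approach is to assign each edge $(i \to j)$ of $\Gamma$ an interior tile whose rational length carries both a bulk integer part $L$ and a ``potential difference'' $\lambda_j - \lambda_i$ for potentials $\lambda_s \in \qqq$ indexed by the states, together with bookends carrying compensating boundary potentials $\pm\lambda_s$ and the source/sink weights $\mathbf u, \mathbf v$. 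A tile sequence yields integer total length exactly when the potential contributions telescope, which happens exactly for valid $\mathbf u \to \mathbf v$ paths in $\Gamma$; an $L$-section as in the forward direction then recovers $F_T = F$.

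The hardest step will be choosing the potentials $\lambda_s$ so that the telescoping criterion selects only valid paths, even though the number of tiles per tiling grows unboundedly with $n$. One cannot use generic $\zz$-linear independence of the $\lambda_s$ over $\qqq$ modulo $\zz$ (impossible for rationals). Rather, I would pick an integer modulus $N$ tailored to $\Gamma$ so that every non-telescoping NFA state-imbalance lies in a controlled sublattice of $(\zz/N\zz)^{|S|}$, and set $\lambda_s = \alpha_s/N$ for suitable $\alpha_s$ separating these imbalances; the required $N$ depends only on the NFA, not on $n$. The verification that only valid paths produce integer-length tilings is then a matter of tracking residues mod $N$ through the tiling process, and combined with the Schützenberger representation of $F$, this yields a rational tile set $T$ with $F_T = F$.
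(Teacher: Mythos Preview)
The paper does not give its own proof of this theorem; it cites it as a consequence of Sch\"utzenberger's equivalence between $\nn$-rational series and weighted automata, together with the known correspondence between strip tilings and regular languages (see~$\S$\ref{ss:fin-nn} and the references there). The mechanism underlying that correspondence is that the \emph{boundary profiles} of the tiles encode the automaton states: two tiles can sit next to each other only when the right boundary of the first equals the left boundary of the second. This is also exactly how the paper handles the analogous backward direction in the irrational setting (proof of Lemma~\ref{D0}, via Lemma~\ref{lemGraph}).

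Your backward direction tries something different---encoding states arithmetically via rational potentials $\lambda_s$ so that a tile sequence has integer total length only when it comes from a valid automaton path---and this has a genuine gap. The total length of a sequence of rectangular tiles depends only on the \emph{multiset} of tiles used, not on their order. So if $(e_1,\ldots,e_k)$ is a valid path, every permutation of these same tiles, sandwiched between the same bookends, yields a tiling of exactly the same integer length, even though almost none of these permutations are valid paths. Concretely, for the three-state automaton with edges $a\colon 1\to 2$ and $b\colon 2\to 3$, the valid path $(a,b)$ and the non-path $(b,a)$, each between the ``start at $1$'' and ``end at $3$'' bookends, have identical total length for \emph{every} choice of $\lambda_1,\lambda_2,\lambda_3$. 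No modulus~$N$ can repair this: the obstruction is order-insensitivity of length, not a residue phenomenon. The fix is to carry the state in the tile's boundary shape rather than in its length, which is precisely the transfer-matrix construction the cited references (and the paper itself, in~$\S$\ref{s:Diag}) use.

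Your forward direction is fine in outline (rescale to integer coordinates, build an $\nn$-weighted automaton, take the $d$-section), but the displayed formula $G(x)=B_L(x)\,B_R(x)/(1-T_\circ(x))$ only holds when every tile is a rectangle. With general rational tiles one needs the full transfer matrix to track boundary matching; the conclusion that $G$ is $\nn$-rational still holds, just not via that formula.
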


\smallskip

\noindent
Here the class $\CR_1$  of \emph{$\nn$-rational functions} is defined to be
the smallest class of GFs $G(x)=g(0)+g(1)x+g(2)x^2+\ldots,$
such that:

\smallskip

$(1)$ \ $0, x \in \ts \CR_1$,

$(2)$ \ $G_1, G_2 \in \ts \CR_1$ \ \, $\Longrightarrow$ \ \, $G_1+G_2, G_1\cdot G_2 \in \ts \CR_1$,

$(3)$ \ $G \in \ts \CR_1$, $g(0)=0$ \ \, $\Longrightarrow$ \ \, $1/(1-G) \in \ts \CR_1$\ts.

\smallskip

\noindent
This class of rational GFs is classical and closely related to
\emph{deterministic finite automata} and \emph{regular languages},
fundamental objets in the Theory of Computation (see e.g.~\cite{MM,Sip}),
and Formal Language Theory (see e.g.~\cite{BR,SS}).\footnote{Although
we never state the connection explicitly, both theories give a
motivation for this work, and are helpful in
understanding the proofs (cf.~$\S$\ref{ss:fin-nn}).
}

\smallskip

We are now ready to state the main result.  Let $T$ be a finite set of tiles as above
(no bookends), which all have height~1 but now allowed to have irrational
length intervals in the boundaries.
Denote by $f(n)=f_{T,\ts\ve}(n)$ the number of tilings with $T$ of rectangles
$[1\times (n + \ve)]$, where $\ve \in \rr$ is fixed.  Denote by $\cf$ the set of all
such functions.

Observe that $\cf$ is much larger than $\cf_1$. For example, take $2$ irrational tiles
$\bigl[1\times (\frac12 \pm\al)\bigr]$, for some \ts $\al \notin\qqq$, \ts $0<\al<1/2$,
and let $\ve=0$ (see Figure~\ref{f:half}). Then $f(n) = \binom{2n}{n}$, and the
GF equal to \ts $F(x) = 1/\sqrt{1-4x}$\ts.

\begin{figure}[hbt]
\begin{center}
\psfrag{a}{\footnotesize $\bigl[1\times (\frac12 -\al)\bigr]$}
\psfrag{b}{\footnotesize $\bigl[1\times (\frac12 +\al)\bigr]$}
\psfrag{n}{\small $[1\times 4]$}
\epsfig{file=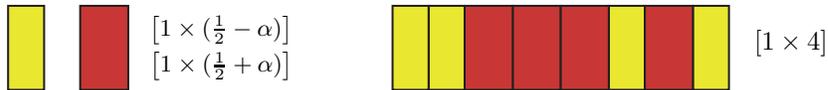,width=10.4cm}
\end{center}
\caption{Set of $2$ irrational tiles; a tiling of $[1\times 4]$ with $8$ tiles.}
\label{f:half}
\end{figure}

Let $\CR_k$ denote the multivariate $\nn$-rational functions defined as a
as the smallest class of GFs \ts $F\in \nn[[x_1,\ldots,x_k]]$, which
satisfies condition

$(1')$ \ $0, \ts x_1,\ldots,x_k \ts \in \ts \CR_1$.

\noindent
and conditions~$(2)$, $(3)$ as above.

\smallskip

\begin{mthm}\label{t:main}
Function $f=f(n)$ is in $\cf$ if and only if
$$f(n) \, = \, \bigl[x_1^{n}\ldots x_k^{n}\bigr]\,
F(x_1,\ldots,x_k) \quad \text{for some}
\quad F\in \CR_k\,.$$
\end{mthm}

\smallskip
\nin
The theorem can be viewed as a
multivariate version of Theorem~\ref{t:classical}, but strictly speaking
it is not a generalization; here the number~$k$ of variables is not specified,
and can in principle be very large even for small~$|T|$
(cf.~$\S$\ref{ss:fin-gessel}).
Again, proving that $\cf$ is a subset of diagonals of rational functions
$F \in \zz[x_1,\ldots,x_k]$ is relatively straightforward
by an appropriate modification of the transfer-matrix method,
while our result is substantially stronger.

\smallskip

\begin{mthm}\label{t:main-new}
Function $f=f(n)$ is in $\cf$ if and only if it can be written as
$$
f(n) \, = \,
   \. \sum_{(v_1,\ldots,v_d)\in\mathbb{Z}^{d}}\,\ts\prod_{i=1}^{r}\.
   \binom{a_{i1} v_1 + \ldots + a_{id}v_d + a'_{i} n + a''_{i}}{b_{i1} v_1 + \ldots + b_{id}v_d  + b'_{i} n + b''_{i}}\,,
$$
for some $r, d\in\mathbb{N}$, and $a_{ij}, b_{ij}, a'_{i}, b'_{i}, a''_{i}, b''_{i}\in \zz$, for all $1\le i\le r$,
$1\le j\le d$.\footnote{The binomial coefficients here are defined to be zero for negative parameters
(see~$\S$\ref{ss:def-binom} for the precise definition); this allows binomial multisums in
the r.h.s.\ to be finite.}
\end{mthm}

\smallskip

The \emph{binomial multisums} (multidimensional sums)
as in the theorem is a special case of a very broad class of
\emph{holonomic functions}~\cite{PWZ}, and a smaller class of
\emph{balanced multisums} defined in~\cite{Gar} (see~$\S$\ref{ss:fin-multisums}).
For examples of binomial multisums, take the
\emph{Delannoy numbers}~$D_n$ (sequence {\tt A001850} in~\cite{OEIS}),
and the \emph{Ap\'{e}ry numbers}~$A_n$ (sequence {\tt A005259} in~\cite{OEIS})~:
$$(\lozenge) \qquad
D_n\, = \, \sum_{k=0}^n \. \binom{n+k}{n-k}\binom{2\ts k}{k}\ts,
\qquad
A_n\, = \, \sum_{k=0}^n \. \sum_{j=0}^k \. \binom{n}{k} \binom{n+k}{k} \binom{k}{j}^3\ts.
$$

\smallskip

In summary, Main Theorems~\ref{t:main} and~\ref{t:main-new}
give two different characterizations of tile counting functions
$f_{T,\ts\ve}(n)$,
for some fixed $\ve\in \rr$ and an irrational set of tiles~$T$.
Theorem~\ref{t:main} is perhaps more structural, while
Theorem~\ref{t:main-new} is easier to use to give explicit constructions
(see Section~\ref{s:three}).  Curiously, neither direction of
either main theorem is particularly easy.

The proof of the main theorems occupies much of the paper.
We also present a number of applications of the main theorems,
most notably to construction of tile counting function with given
asymptotics (Section~\ref{s:app}).  This requires the full power
of both theorems and their proofs.  Specifically, we use the
fact that this class of functions are closed under addition
and multiplication -- this is easy to see for the tile counting
functions and the diagonals, but not for the binomial multisums.

\smallskip

The rest of the paper is structured as follows.  We begin with
definitions and notation (Section~\ref{s:def}).  In the next key
Section~\ref{s:three}, we expand on the definitions of classes $\cF$,
$\cB$ and~$\cR_k$, illustrate them with examples
and restate the main theorems. Then, in Section~\ref{s:app},
we give applications to asymptotics of tile counting functions
and to the Catalan numbers conjecture (Conjecture~\ref{conj:cat}).
In the next four sections~\ref{s:enum}--\ref{s:tech-proofs} we
present the proof of the main theorems, followed by the proofs
of applications (sections~\ref{s:alt} and~\ref{s:tech-apps}).
We conclude with final remarks in Section~\ref{s:fin}.

\bigskip

\section{Definitions and notation}\label{s:def}

\subsection{Basic notation}\label{ss:def-binom}
Let $\mathbb{N}=\{0,1,2,\ldots\}$, $\pp = \{1,2,\ldots\}$, and let
$\aA = \overline{\qqq}$ be the field of algebraic numbers.
For a GF \ts $G\in \zz[[x_1,\ldots,x_k]]$, denote by
\. $\bigl[x_1^{c_1}\ldots x_k^{c_k}\bigr]\,G$ \.
the coefficient of \ts $x_1^{c_1}\ldots x_k^{c_k}$  \ts in~$G$,
and by $[1]\ts G$ the constant term in~$G$.

For sequences $f, g :\nn \to \rr$, we use notation \ts
$f\sim g$ \ts to denote that $f(n)/g(n)\to 1$ as $n\to \infty$.
Here and elsewhere we only use the $n\to \infty$ asymptotics.

We assume that $0!=1$, and $n!=0$ for all $n <0$.  We also
extend binomial coefficients to all $a,b\in\mathbb{Z}$ as follows:
\[
 {a\choose b}=
  \begin{cases}
   \frac{a!}{(a-b)!b!} & \text{\ \ if  \ \ \ } 0\leq b\leq a\ts,\\
   { \quad 1 }        & \text{\ \ if \ \ \ } a=-1,\, \, b=0\ts,\\
   { \quad 0 }      & \text{\ \ otherwise\ts.}
  \end{cases}
\]

\noindent
{\textmd{CAVEAT}:} \ts This is not the way binomial coefficients
are normally extended to negative inputs; this notation
allows us to use ${a+b-1\choose b}$ to denote the number
of ways to distribute $b$ identical objects into $a$
distinct groups, for all $a,b\ge 0$.

\subsection{Tilings}
For the purposes of this paper, a \textit{tile} is an axis-parallel
simply connected (closed) polygon in~$\rr^2$.  A \textit{region}
is a union of finitely many axis-parallel polygons.
We use $|\tau|$ to denote the area of tile~$\tau$.

We consider only finite sets of tiles
$T=\{\tau_1,\ldots,\tau_r\}$.
A \textit{tiling} of a region $\Gamma$ with the set of tiles~$T$,
is a collection of non-overlapping translations of tiles in~$T$
(ignoring boundary intersections), which covers~$\Ga$.
We use $\CT(\Gamma)$ to denote the number of tilings of
$\Gamma$ with~$T$.

A set of tiles $T$ is called \textit{tall} if every tile
in $T$ has height~1.  We study only tilings with tall tiles
of rectangular regions \ts $\rR_a = [1\times a]$, where~$a>0$.

\subsection{Graphs}  \label{ss:def-graphs}
Throughout the paper, we consider finite
\emph{directed weighted multi-graphs}~${\scg}=(V,E)$.  This means
that between every two vertices $v,v'\in V$ there is a finite
number of (directed) edges $v\to v'$, each with its own weight.
A \emph{path}~$\ga$ in~$\scg$ is a sequence of oriented edges
$(v_1,v_2)$, $(v_2,v_3)$, \ldots, $(v_{\ell-1},v_\ell)$;
vertices~$v_1$ and~$v_\ell$ are called start and end of the path.
A \emph{cycle} is a path with $v_1=v_\ell$.  The \emph{\tweight}
of a path or a cycle, denoted $w(\ga)$, is defined to be the
sum of the weights of its edges.

\bigskip

\section{Three classes of functions}\label{s:three}

\subsection{Tile counting functions}  Fix $\varepsilon\ge 0$ and let
$T$ be a set of tall tiles.  In the notation above, $f(n)=\CT(\Rven)$
is the number of tiling of of rectangles $[1\times (n + \ve)]$ with~$T$.
We refer to $f(n)$ as the \textit{tile counting function}.  In notation
of the introduction, $\cf$ is the set of all such functions.
\begin{Example}{\rm \label{ex:list}
We define functions \ts $g_1,\ldots,g_6:\mathbb{N}\rightarrow\mathbb{N}$ \ts as follows:
$$
g_1(n)=
  \begin{cases}
   1 & \text{if } n \text{ is even}\\
   0  & \text{if } n \text{ is odd}
  \end{cases} \, \qquad g_2(n)=2 \,,\qquad g_3(n)=n^2 \,,$$
$$ g_4(n)=2^n \, \qquad g_5(n)=F_n \, \qquad g_6(n)={2n\choose n}\,,$$
where $F_n$ is the $n$-{th} Fibonacci number.  Let us show that these functions are all in $\mathcal{F}$.

\smallskip

First, function $g_1$ counts tilings of a length $n$ rectangle by a single rectangle
$\rR_2$.  Second, consider a set of six tiles $T_2$ as in Figure~\ref{f:ex-tcf},
with dark shaded tiles of area
$\al>0$, $\al \notin \qqq$, the light shaded tiles of area~$1$, and set $\ve = 2\al$.
Now observe that $\Rven$ rectangle can be
tiled with~$T_2$ in exactly two ways: one way using either the first
or the second triple of tiles.

Third, take any two rationally independent irrational numbers $\alpha>\beta>0$,
and set $\varepsilon=\alpha+\beta$.  Consider the set of three rectangles
$T_3 = \{\rR_1, \rR_{1+\al}, \rR_{1+\be},\rR_{1+\al+\be}\}$.  Now observe that there are exactly
$n^2$ tilings of $\rR_{n+\alpha+\beta}$.  Fourth,
take a set $T_4$ with one unit square and two tiles which can only form a unit square,
and observe that $\rR_{n}$ has exactly $2^n$ tilings.  The remaining two examples
are given in the introduction.

\begin{figure}[hbt]
\begin{center}
\psfrag{1}{\small $1$}
\psfrag{2}{\small $2$}
\psfrag{a}{\small $\al$}
\psfrag{1a}{\small \mts \mts $1+\be$}
\psfrag{1b}{\small \mts \mts $1+\al$}
\psfrag{1ab}{\small \mts \mts $1+\al+\be$}
\psfrag{T1}{\small $T_1$}
\psfrag{T2}{\small $T_2$}
\psfrag{T3}{\small $T_3$}
\psfrag{T4}{\small $T_4$}
\epsfig{file=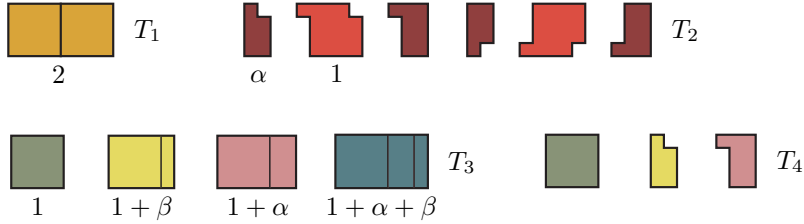,width=10.75cm}
\end{center}
\caption{Tile sets $T_1,\ldots,T_4$ in the example.}
\label{f:ex-tcf}
\end{figure}

}\end{Example}

\subsection{Diagonals of $\mathbb{N}$-rational generating functions}
As in the introduction, let $\mathcal{R}_k$ be the smallest
class of GFs in~$k$ variables $x_1,\ldots,x_k$, satisfying
\begin{enumerate}
\item $0$,~$x_1,\ldots,x_k\in \mathcal{R}_k$\ts,
\item If $F, G\in \mathcal{R}_k$, then $F+F$ and $F\cdot G\in \mathcal{R}_k$.
\item If $F \in \mathcal{R}_k$, and $[1]\ts F=0$, then $\frac{1}{1-F}\in \mathcal{R}_k$.
\end{enumerate}
A GF in $\mathcal{R}_k$ is called an \emph{$\mathbb{N}$-rational
generating function in $k$ variables}. Note that if $G(x_1,\ldots,x_k)\in \cR_k$,
then so is $G(x_1^m,\ldots,x_k^m)$,for all integer~$m\ge 2$.

A \emph{diagonal} of $G\in \nn[[x_1,\ldots,x_k]]$ is a
function $f:\nn \to \nn$ defined by
$$
f(n) \, = \, \bigl[x_1^{n}\ldots x_k^{n}\bigr]\,G(x_1,\ldots,x_k)\..
$$
Denote by $\cN$ the set of
diagonals of all $\mathbb{N}$-rational generating functions,
over all $k\in \pp$.

\begin{Example}{\rm In notation of Example~\ref{ex:list}, let us show that
$g_1,\ldots,g_6\in\cN$~:
$$
g_1(n)=\bigl[x^n]\frac{1}{1-x^2}\.,\ g_2(n) =
\bigl[x^n\bigr]\,\frac{1}{1-x}+\frac{1}{1-x}\,,\ g_3(n)=
\bigl[x^ny^n\bigr]\,x\left(\frac{1}{1-x}\right)^2y\left(\frac{1}{1-y}\right)^2\mts\mts,
$$
\smallskip
$$
g_4(n)=\bigl[x^n\bigr]\,\frac{1}{1-2x}\.,\ g_5(n)=
\bigl[x^n\bigr]\,\frac{1}{1-x-x^2}\.,\ g_6(n)=\bigl[x^ny^n\bigr]\,\frac{1}{1-x-y}\..
$$
\smallskip
}\end{Example}

\subsection{Binomial multisums} \label{ss:three-binom}
Following the statement of Main Theorem~\ref{t:main-new}, denote by $\mathcal{B}$
the set of all functions $f:\mathbb{N}\rightarrow\mathbb{N}$
that can be expressed as
$$
 f(n) \, = \,
   \. \sum_{v\in\mathbb{Z}^{d}}\,\ts\prod_{i=1}^{r}\. {\alpha_{i}(v,n)\choose\beta_{i}(v,n)},
$$
for some $\alpha_{i} = \ba_{i} v + a'_{i} n + a''_{i}$, $\be_{i} = \bb_{i} v + b'_{i} n + b''_{i}$,
where $r, d\in\mathbb{N}$, $\ba_{i}, \bb_{i}: \zz^{d} \to \zz$ are integer linear functions,
and $a'_{i}, b'_{i}, a''_{i}, b''_{i}\in \zz$, for all~$i$.

\smallskip

Note that the summation over all $v\in \zz^{d}$ is infinite, so it is unclear
from the definition whether the multisums $f(n)$ are finite.  However,
the binomial coefficients are zero for the negative values of $\be_{i}$
and $\al_{i}- \be_{i}$, so the summation is in fact over integer points in
a convex polyhedron defined by these inequalities.

\begin{Example}\label{ex:list-binom}
{\rm In notation of Example~\ref{ex:list}, it follows from
the definition that $g_2,g_6 \in\mathcal{B}$.
To see $g_1,g_3,g_4,g_5\in\mathcal{B}$, note that
$$
g_1(n)\. = \. \sum_{v\in\mathbb{Z}} {n\choose 2v}{2v\choose n}, \ \
g_3(n)\. = \. {n\choose 1}{n\choose 1}, \ \
g_4(n) \. =  \. \sum_{v\in\mathbb{Z}}{n\choose v}, \ \
g_5(n)\. =\. \sum_{v\in\mathbb{Z}}{n-v\choose v}.
$$
For the last formula for the Fibonacci numbers $g_5(n)=F_n$ is classical, see e.g.\
\cite[p.~14]{Rio} or~\cite[Exc.~1.37]{Sta}.
}\end{Example}

\medskip

\subsection{Main theorems restated} Surprisingly, the class~$\cB$ of binomial
multisums as above coincides with both tile counting functions and diagonals
of $\nn$-rational functions, and plays an intermediate role connecting them.

\begin{mlemma}\label{t:main-countR}
\. $\mathcal{F}=\cN=\mathcal{B}.$
\end{mlemma}

The proof of Main Theorem~\ref{t:main-countR} is split into three parts.
Lemmas~\ref{countR1},~\ref{D0} and~\ref{D1} state $\mathcal{F}\subseteq\mathcal{B}$,
$\cN\subseteq\mathcal{F}$ and $\mathcal{B}\subseteq\cN$, respectively.
Each is  proved in a separate section, and together they imply the Main Theorem.

\begin{cor}\label{c:add-mult}
The classes of functions $\mathcal{F}=\cN=\mathcal{B}$ are closed under
addition and (pointwise) multiplication.
\end{cor}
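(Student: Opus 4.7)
The plan is to invoke Main Theorem~\ref{t:main-countR}, which gives $\cF=\cN=\cB$, so that verifying the two closure properties in any one of the three classes is enough. I would work exclusively in $\cN$, the class of diagonals of $\nn$-rational generating functions, because closure under sum and product is essentially built into the recursive definition of $\cR_k$ itself.

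Concretely, take $f_1(n)=[x_1^n\cdots x_k^n]\ts F_1$ and $f_2(n)=[y_1^n\cdots y_m^n]\ts F_2$ with $F_1\in\cR_k$ and $F_2\in\cR_m$, in disjoint sets of variables $\{x_i\}$ and $\{y_j\}$. The first step is the observation that any $F\in\cR_k$ can also be regarded as an element of $\cR_{k+m}$: the rules $(1)$--$(3)$ only gain more generators as the variable set grows, so the derivation of $F$ inside $\cR_k$ still lives verbatim inside $\cR_{k+m}$. Product closure is then immediate: set $F_\times=F_1\cdot F_2\in\cR_{k+m}$ by rule~$(2)$, and the diagonal coefficient $[x_1^n\cdots x_k^ny_1^n\cdots y_m^n]\ts F_\times$ factors as $f_1(n)\ts f_2(n)$.

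For the sum, the idea is to pad each summand by the ``all ones'' series in its missing variables, which is $\nn$-rational via a geometric series in a single monomial. Setting
$$
F_+ \, = \, F_1\cdot\frac{1}{1-y_1\cdots y_m}\.+\.F_2\cdot\frac{1}{1-x_1\cdots x_k}\,,
$$
we have $F_+\in\cR_{k+m}$, since the monomials $y_1\cdots y_m$ and $x_1\cdots x_k$ both have constant term~$0$ and so rule~$(3)$ applies. Because $[y_1^n\cdots y_m^n]\ts(1-y_1\cdots y_m)^{-1}=1$ for every $n\ge 0$, extracting the full diagonal of $F_+$ yields exactly $f_1(n)+f_2(n)$.

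The only real obstacle is Main Theorem~\ref{t:main-countR} itself, which is already in hand; once that is granted, both closures reduce to the one-line check above. As the introduction remarks, one could also verify the two closures directly for $\cF$ by appropriate parallel and product constructions of tile sets, but routing through $\cN$ sidesteps any delicate bookkeeping of irrational side-lengths, and it completely avoids the binomial multisum class $\cB$, where neither operation is obvious from the definition.
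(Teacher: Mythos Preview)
Your proof is correct and follows essentially the same route as the paper: invoke Main Theorem~\ref{t:main-countR} to reduce to~$\cN$, then prove closure there using disjoint variable sets (this is exactly Lemma~\ref{l:D2-multi}). The only cosmetic difference is that the paper pads with $\prod_i 1/(1-y_i)$ rather than your $1/(1-y_1\cdots y_m)$; both are $\nn$-rational with diagonal coefficient~$1$, so either choice works.
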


This follows from the Main Theorem~\ref{t:main-countR} and Lemma~\ref{l:D2-multi},
which proves the claim for diagonals $f\in \cN$.

\smallskip

Before we proceed to further applications, let us obtain the following
elementary corollary of the Main Theorem~\ref{t:main-countR}.  Note that each
of these tile counting five functions can be constructed directly via ad hoc
argument in the style of Example~\ref{ex:list}.  We include it as an illustration
of the versatility of the theorem.

\begin{cor}\label{c:posapp}
The following functions $f_1,\ldots,f_5: \mathbb{N}\rightarrow\mathbb{N}$
are tile counting functions:
\renewcommand{\theenumi}{\roman{enumi}}%
\begin{enumerate}
\item $f_1$ has finite support,
\item $f_2$ is periodic,
\item $f_3(n)=a_pn^p+\ldots+a_1n+a_0$, where $a_i\in\mathbb{N}$,
\item $f_4(n)=m^n$, where $m\in\mathbb{N}$,
\item $f_5(n)=m^n-1$, where $m\in\mathbb{N}$, $m\ge 1$\ts.
\end{enumerate}
\end{cor}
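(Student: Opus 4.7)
The plan is to use the equivalence $\cf = \cN = \cB$ from Main Theorem~\ref{t:main-countR} together with closure of $\cN$ under addition (Corollary~\ref{c:add-mult}): it suffices to exhibit each $f_i$ as a diagonal of an explicit $\nn$-rational generating function, possibly after decomposing it as a sum of simpler diagonals. A handful of building blocks carry most of the work: the monomial $x^c$ lies in $\cR_1$ as a $c$-fold product of $x$; any positive integer scalar lies in $\cR_k$ since $m = 1 + \cdots + 1$; and the geometric series $1/(1-G)$ lies in $\cR_k$ whenever $G \in \cR_k$ has $[1]\ts G = 0$. Each of these is immediate from axioms $(1)$--$(3)$ defining $\cR_k$.

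For (i), the indicator $[n = n_0]$ is the diagonal of $x^{n_0} \in \cR_1$, and an arbitrary finitely supported $f_1 : \nn \to \nn$ is a nonnegative integer combination of such indicators, hence a sum of diagonals. For (ii), the indicator of $\{n : n \equiv j \pmod p\}$ is the diagonal of $x^j/(1-x^p) \in \cR_1$, since $x^p \in \cR_1$ has no constant term; any periodic $f_2$ is a finite nonnegative combination of these.

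For (iii), since $[x^n]\, x/(1-x)^2 = n$, the function $n^i$ is the diagonal of $\prod_{j=1}^i x_j/(1-x_j)^2 \in \cR_i$; together with the constant $a_0 = [x^n]\, a_0/(1-x)$ and closure under addition, this shows $f_3 \in \cN$. For (iv), $1/(1-mx) \in \cR_1$ has the right coefficients directly, since $mx \in \cR_1$ has no constant term.

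The substantive case is (v), since $\cN$ is defined by positive closure operations and does not obviously allow subtractions; the main obstacle is to rewrite $m^n - 1$ in an honestly $\nn$-rational way. For $m = 1$ we have $f_5 \equiv 0$, which is covered by (i). For $m \geq 2$ I would combine the two geometric series algebraically:
$$
\sum_{n \ge 0} (m^n - 1)\, x^n \, = \, \frac{1}{1-mx} \. - \. \frac{1}{1-x} \, = \, \frac{(m-1)\, x}{(1-mx)(1-x)}\..
$$
The right-hand side is a product of atoms all of which live in $\cR_1$: the scalar $(m-1) \in \nn$, the variable $x$, and the series $1/(1-mx)$, $1/(1-x)$. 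Hence the whole expression lies in $\cR_1$, so $f_5 \in \cN = \cf$ by the Main Theorem, finishing the corollary.
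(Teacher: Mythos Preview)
Your proof is correct and follows essentially the same approach as the paper: reduce to $\cN$ via the Main Theorem, exhibit each $f_i$ as the diagonal of an explicit $\nn$-rational series, and invoke closure under addition and multiplication (Corollary~\ref{c:add-mult}). The only cosmetic difference is in~(v): the paper derives the generating function from the recurrence $f_5(n+1)=m\ts f_5(n)+(m-1)$ and writes $G(x)=(m-1)/\bigl((1-x)(1-mx)\bigr)$, whereas you subtract the two geometric series and obtain the same product with an extra factor of~$x$ --- both decompositions lie in $\cR_1$ for the same reason.
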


\begin{proof}  By the main theorem, it suffices to show that each
function $f_i$ is in~$\cN$.  Clearly, function $f_1$
is the diagonal of a polynomial, so $f_1\in\cN$.

The functions
\[
 f_{k,p}(m)=
  \begin{cases}
   1 & \text{if } m=k\mbox{ mod }p\\
   0       & \text{otherwise}
  \end{cases}
\]
are the diagonals of the generating functions
$\frac{x^k}{1-x^p},$
for all $0\leq k<p$, so are clearly in $\cN$, and~$f_2$ can be expressed
as a sum of these $f_{k,p}$ functions.  Since $\cN$ is closed under
addition, this implies $f_2 \in \cN$.
Similarly, the polynomial $f(n)=1$ and $f(n)=n$ are the diagonals of
$1/(1-x)$ and $x/(1-x)^2$ respectively, and thus in~$\cN$.  Since~$\cN$
is closed under addition and multiplication, we have $f_3 \in \cN$.

The function $f_4$ is the diagonal of $\frac{1}{1-mx}$, and therefore
in~$\cN$.  Similarly, the function $f_5$ satisfies the
recurrence $f_5(n+1)=mf_5(n)+(m-1)$,
and thus the diagonal of the generating function $G(x)$ satisfying
$G=mx\ts G+(m-1)/(1-x)$.  Note that
$$
G(x) \. = \. (m-1)\cdot \frac{1}{(1-x)}\cdot \frac{1}{(1-m\ts x)}\..
$$
Therefore, $G \in\cR_1$, which implies $f_5\in \cN$. \end{proof}

\medskip

\subsection{Two more examples}
Recall that our definition of binomial coefficients is
modified to have $\binom{-1}{0}=1$, see~$\S$\ref{ss:def-binom}.
This normally does not affect any (usual) binomial sums,
e.g.\ the Delannoy and Ap\'{e}ry numbers defined in the
introduction remain unchanged when the summations
in~$(\lozenge)$ are extended to all integers.  Simply put,
whenever $\binom{-1}{0}$ appears there, some other binomial
coefficient in the product is equal to zero.

The proof of Main Theorem~\ref{t:main-countR} is constructed
by creating a large number of auxiliary variables for
the $\nn$-rational functions, and auxiliary indices
for the binomial multisums.  These auxiliary indices
are often constrained to a small range, and $\binom{-1}{0}$
does appear in several cases.

\begin{Example} \label{ex:lucas} {\rm
Denote by $L_n$ the \emph{Lucas numbers} \ts $L_n = L_{n-1} + L_{n-2}$,
where $L_1=1$ and $L_2=3$, see e.g.~\cite[$\S$4.3]{Rio}
(sequence {\tt A000204} in~\cite{OEIS}).
They have a combinatorial interpretation as the number of matchings
in an $n$-cycle, and are closely related to Fibonacci numbers $F_n$~:
$$(\circledcirc) \qquad
L_n \. =  \. F_{n} + F_{n-2} \quad \text{for} \  n\ge 2\ts.
$$
From Corollary~\ref{c:add-mult}, the function
$f(n):= L_n$ is in~$\cF$.  In fact, it is immediate that
$L_n \in \cR_1$~:
$$
L_n \, = \, [x^n] \. \frac{1+x^2}{1-x-x^2}\..
$$
To see directly that Lucas numbers are in~$\cF$, take five tiles
as in Figure~\ref{f:lucas}, with two right bookends,
emulating~$(\circledcirc)$.
On the other hand, finding a binomial sum is less intuitive,
as $\cB$ is not obviously closed under addition.  In fact, we have:
$$
L_n \, = \, \sum_{(k,i) \in \zz^2} \. \binom{n-k-2\ts i}{k} \binom{1}{i}\ts,
$$
where we use~$(\circledcirc)$, the formula for $g_5(n)$ in
Example~\ref{ex:list-binom}, and make~$i$ constrained to~$\{0,1\}$.
Note that we avoid using $\binom{-1}{0}$.
\begin{figure}[hbt]
\begin{center}
%
%
\epsfig{file=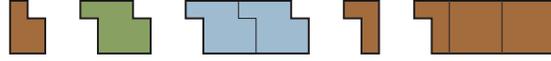,width=7.5cm}
\end{center}
\caption{Five tiles giving Lucas numbers~$L_n$.}
\label{f:lucas}
\end{figure}
}
\end{Example}

\begin{Example}  \label{ex:powers}
{\rm Let $f(n)=2^n+3^n$.  Checking that $f\in \cF$ and
$f\in \cN$ is straightforward and similar to $g_4(n)$
in the examples above.  However, finding a binomial
multisum is more difficult:
$$
f(n) \. = \. \sum_{(i,j,k,\ell,m)\in \zz^5} \binom{n}{i}
\binom{m}{j}\binom{1}{k}\binom{m-k}{m}\binom{\ell+k-1}{\ell}
\binom{i}{m+\ell}\binom{m+\ell}{i}\ts.
$$
Note here that the term $\binom{1}{k}$ gives $k\in\{0, 1\}$.  Also,
$\binom{i}{m+\ell}\binom{m+\ell}{i}$  terms give $m+\ell=i$.
Similarly, $\binom{m-k}{m}\binom{\ell+k-1}{\ell}$ give that
$m=0$ if $k=1$, and $\ell=0$ if $k=0$.  Therefore,
$$
f(n) \, = \, \sum_{(j,m)\in \zz^2} \binom{n}{m}
\binom{m}{j} \. + \. \sum_{\ell\in \zz} \binom{n}{\ell}
\, = \, 2^n \. + \.3^n\ts,
$$
where two sums correspond to the cases $k=0$ and $k=1$, respectively.
Note that $\binom{-1}{0}=1$ is essential in this calculation.
It would be interesting to see if Theorem~\ref{t:main-new}
holds without modification.
}
\end{Example}

\bigskip

\section{Applications}\label{s:app}

\subsection{Balanced multisums}\label{ss:app-balanced}
Define a \emph{positive multisum} to be a function
\ts $g:\mathbb{N}\rightarrow\mathbb{N}$ \ts
that can be expressed as
$$
 g(n) \, = \,
   \. \sum_{v\in\mathbb{Z}^{d}}\,\ts\prod_{i=1}^{r}\. \frac{\al_{i}(v,n)!}{\be_{i}(v,n)! \. \ga_{i}(v,n)!},
$$
for some $\al_{i} = \ba_{i} v + a'_{i} n + a''_{i}$, $\be_{i} = \bb_{i} v + b'_{i} n + b''_{i}$, 
$\ga_{i} = \bc_{i} v + c'_{i} n + c''_{i}$, 
where $r, d\in\mathbb{N}$, $\ba_{i},\bb_i,\bc_i: \zz^{d} \to \zz$ are integer linear functions,
and $a'_{i},\ldots,c''_{i}\in \zz$, for all~$i$.  Here the sum is
over all $v\in\mathbb{Z}^{d}$ for which $\al_i(v,n),\be_i(v,n),\ga_i(v,n)\ge 0$, for all~$i$.

Positive multisum is called \emph{balanced} if \ts $\al_i=\be_i+\ga_i$ for all~$i$.
Denote by $\cBB$ the set of finite sums of balanced positive multisums:
$$
f(n) \. = \. g_1(n) \ts + \. \ldots \. + \ts g_k(n)\ts.
$$

\begin{thm} \label{t:main-cbb}
\ts $\cB=\cBB$.
\end{thm}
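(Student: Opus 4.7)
The plan is to prove $\cBB\subseteq\cB$ and $\cB\subseteq\cBB$ separately. The single subtle point is the modified convention $\binom{-1}{0}=1$ from $\S$\ref{ss:def-binom}: this is the only place where the binomial and balanced-factorial frameworks disagree, and both directions will turn on small balanced ``indicator'' gadgets engineered around it.

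For $\cBB\subseteq\cB$, I will show that a single balanced positive multisum \. $g(n)=\sum_v \prod_i \frac{\alpha_i!}{\beta_i!\gamma_i!}$ \. (summed with $\beta_i,\gamma_i\ge 0$) equals the binomial multisum
$$
g(n) \. = \. \sum_{v\in\zz^d} \. \prod_i \. \binom{\alpha_i}{\beta_i} \binom{\beta_i}{\beta_i} \binom{\gamma_i}{\gamma_i}
$$
summed freely over $v\in\zz^d$. Under the modified convention, $\binom{\beta_i}{\beta_i}$ equals $1$ for $\beta_i\ge 0$ and $0$ for $\beta_i<0$ (the case $\beta_i=-1$ gives $\binom{-1}{-1}$, not the anomalous $\binom{-1}{0}$, and so vanishes); likewise $\binom{\gamma_i}{\gamma_i}$ indicates $\gamma_i\ge 0$. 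Together these factors kill every $v$ outside the positive-multisum's support, and in particular wipe out any stray contribution at $(\alpha_i,\beta_i)=(-1,0)$. Hence each balanced positive multisum is in $\cB$, and finite sums of them land in $\cB$ by Corollary~\ref{c:add-mult}.

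For $\cB\subseteq\cBB$, I split each factor of a binomial multisum as $\binom{\alpha_i}{\beta_i}=A_i+B_i$, where $A_i=\frac{\alpha_i!}{\beta_i!(\alpha_i-\beta_i)!}$ on $\{0\le\beta_i\le\alpha_i\}$ (and $0$ elsewhere) and $B_i$ is the indicator of the stray locus $\{\alpha_i=-1,\,\beta_i=0\}$; these have disjoint supports, so the identity is pointwise. Expanding $\prod_i(A_i+B_i)$ into $2^r$ subset terms gives
$$
f(n) \. = \. \sum_{S\subseteq[r]} \. \sum_{v} \. \prod_{i\in S} B_i(v,n) \. \prod_{i\notin S} A_i(v,n).
$$
For each $S$ the $B_i$ factors enforce linear equalities $\alpha_i(v,n)=-1$ and $\beta_i(v,n)=0$, which I realise inside the balanced positive multisum framework by the gadgets
$$
\frac{0!}{(\alpha_i+1)!(-\alpha_i-1)!} \qquad\text{and}\qquad \frac{0!}{\beta_i!(-\beta_i)!}.
$$
These are balanced (since $0=(\alpha_i+1)+(-\alpha_i-1)$, etc.) and equal $1$ exactly where simultaneous non-negativity of the two denominator arguments forces the corresponding equality. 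The remaining $i\notin S$ factors are already in balanced form $\frac{\alpha_i!}{\beta_i!(\alpha_i-\beta_i)!}$, so each $S$-summand is a single balanced positive multisum, and $f=\sum_S g_S$ exhibits $f\in\cBB$.

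The main obstacle is the bookkeeping around the modified $\binom{-1}{0}=1$ convention, the single stray point where the two frameworks diverge; without it both directions would be essentially immediate. A secondary asymmetry is that $\cB$ consists of single sums-of-products while $\cBB$ allows finite sums: I route the $\cBB\subseteq\cB$ direction through the already-available closure in Corollary~\ref{c:add-mult}, while the $\cB\subseteq\cBB$ direction produces its finite sum for free from the $2^r$-term subset expansion.
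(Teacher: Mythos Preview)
Your proof is correct and follows essentially the same approach as the paper's. The only cosmetic difference is in the $\cBB\subseteq\cB$ direction: the paper multiplies by the single indicator $\binom{\alpha_i-1}{0}$ (which forces $\alpha_i\ge 0$ and hence rules out the stray $\binom{-1}{0}$), whereas you use $\binom{\beta_i}{\beta_i}\binom{\gamma_i}{\gamma_i}$ to force $\beta_i,\gamma_i\ge 0$ directly; both work because of the balance $\alpha_i=\beta_i+\gamma_i$, and the $\cB\subseteq\cBB$ direction (the $2^r$ subset expansion with the balanced indicators $\frac{0!}{(\alpha_i+1)!(-\alpha_i-1)!}$ and $\frac{0!}{\beta_i!(-\beta_i)!}$) is identical to the paper's up to taking complements of the index set.
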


The Delannoy and Ap\'{e}ry numbers defined in equation~$(\lozenge)$ 
in the introduction are examples of balanced multisums, as are Lucas numbers,
see Example~\ref{ex:lucas}.  These formulas use only one balanced positive multisum, 
i.e.\ have $k=1$.
However, as Example~\ref{ex:powers} suggests, the sums $f(n)=2^n+3^n$ can we written
with $k=2$, as the lengthy binomial multisum for~$f(n)$ involves using
the $\binom{-1}{0}=1$ notation.
Therefore, one can think of Theorem~\ref{t:main-cbb} as a \emph{tradeoff}:
we prohibit using the $\binom{-1}{0}$ notation, but now allow taking finite sums of
balanced multisums (cf.~$\S$\ref{ss:fin-multisums}).

We give direct proof of the theorem in Section~\ref{s:tech-apps}.
Note that $\cBB$ is trivially closed under addition and multiplication, 
so Theorem~\ref{t:main-cbb} together with the main theorem immediately 
implies Corollary~\ref{c:add-mult}.  

\subsection{Growth of tile counting functions}\label{ss:app-asympt}
We say that a function $f$ is \textit{eventually polynomial} if there exist
an $N\in\mathbb{N}$ and a polynomial $q$ such that for all $n\geq N$,
we have $f(n)=q(n)$.  We say that a function $f$ \ts\textit{grows exponentially},
if there exist $c_1,c_2>0$ and $N\in\mathbb{N}$, such that for all $n\geq N$,
we have $e^{c_1n}\leq f(n)\leq e^{c_2 n}$.

\begin{thm}\label{t:mainR}
Let $f\in \cF$ be a tile counting function. There exists an integer $m\geq 1$,
such that every function $f_i(n):=f(n\ts m+i)$ either grows exponentially
or is eventually polynomial, where $0 \le i \le m-1$.
\end{thm}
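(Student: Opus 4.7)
The plan is to invoke Main Lemma~\ref{t:main-countR} to rewrite $f$ as the diagonal of an $\nn$-rational series, $f(n) = [x_1^n \cdots x_k^n]\, F(x_1, \ldots, x_k)$ with $F \in \cR_k$. The inductive definition of $\cR_k$ yields the standard representation of $F$ as the generating function for weighted walks in a finite directed multigraph $\scg$ with designated source and sink, whose edges carry monomials in $x_1, \ldots, x_k$ and weights in $\nn$; this is the weighted-automaton picture alluded to in the introduction. Under this correspondence, $f(n)$ counts walks from source to sink whose aggregate edge-label is $x_1^n \cdots x_k^n$, i.e., whose ``color vector'' equals $(n,\ldots,n)\in\nn^k$.

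First I would decompose $\scg$ into its strongly connected components (SCCs) and classify walks by their skeleton --- the sequence of SCCs visited together with the transition edges between them. Within a fixed skeleton, the remaining freedom is a choice of closed walks inside each visited SCC, whose total color contribution must bring the grand total up to $(n,\ldots,n)$. The integer $m$ is then chosen to capture the arithmetic constraints of the system: essentially, the least common multiple of the gcds of cycle color-sums (after each coordinate projection) across all SCCs together with the divisibility conditions coming from the fixed skeletal contributions, so that within each arithmetic progression $nm+i$ only a consistent family of skeletons contributes.

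Second, for each fixed skeleton, the contribution to $f_i(n)=f(nm+i)$ is a sum, over feasible cycle color vectors, of products of within-SCC walk counts. A Perron--Frobenius analysis of the transfer matrix of each SCC along the diagonal direction $(1,\ldots,1)$ shows that the number of closed walks with color vector $(N,\ldots,N)+O(1)$ inside a single SCC is either polynomial in $N$ or of the form $\rho^N N^{O(1)}$ with $\rho>1$. Since there are finitely many skeletons and polynomial plus polynomial remains polynomial while any exponential summand dominates, $f_i$ must fall into exactly one of the two regimes. The main obstacle is making this multivariate Perron--Frobenius step rigorous: ruling out intermediate rates such as $e^{\sqrt n}$ and confirming that the $\nn$-non-negativity of coefficients forbids oscillatory cancellations. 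A cleaner alternative route, which I would pursue if the graph-theoretic analysis becomes cumbersome, is to use Theorem~\ref{t:main-cbb} to write $f$ as a finite sum of balanced positive multisums and then invoke Stirling: each such multisum is asymptotically $e^{n\Phi^*}$ times a polynomial correction, where $\Phi^*$ is the supremum of a sum-of-entropies functional over a limiting polytope, with $\Phi^*=0$ yielding polynomial growth along the progression and $\Phi^*>0$ yielding exponential growth.
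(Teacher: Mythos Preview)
Your proposal follows a genuinely different route from the paper, but it contains a real gap that neither of your two suggested strategies closes.

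The paper does \emph{not} work with the diagonal/automaton picture at all.  Instead it uses the binomial multisum representation $f(n)=\sum_{v}\prod_i\binom{\alpha_i(v,n)}{\beta_i(v,n)}$ from Main Theorem~\ref{t:main-countR} and argues by induction on the number $k$ of indices $i$ for which $\alpha_i,\beta_i,\alpha_i-\beta_i$ are all non-constant.  At each step the sum is split according to whether each $\beta_i$ and $\gamma_i=\alpha_i-\beta_i$ is bounded or large; the piece where every $\beta_i,\gamma_i$ is large is shown to be identically zero unless $f$ grows exponentially, and every other piece has smaller~$k$.  The base case $k=0$ reduces to counting lattice points in a parametric rational polytope, and the conclusion that this count is \emph{eventually equal to a quasi-polynomial} comes from the generalized Ehrhart result of Chen--Li--Sam (Lemma~\ref{l:ehr}).

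The gap in your plan is precisely that last step.  Both your Perron--Frobenius route and your Stirling/entropy route are asymptotic: at best they would show that each $f_i$ has either exponential or polynomial \emph{growth rate}.  The theorem, however, asserts that in the non-exponential case $f_i$ is \emph{eventually equal to a polynomial}, i.e.\ $f_i(n)=q(n)$ exactly for all large~$n$.  No amount of saddle-point or spectral analysis will deliver that exact equality; you need an algebraic or Ehrhart-type statement.  In the paper this is supplied by Lemma~\ref{l:ehr}, and the entire inductive reduction is designed to land in a situation where that lemma applies.  Your SCC/skeleton decomposition and the entropy functional $\Phi^\ast$ do not naturally produce a parametric lattice-point count, so even if you patched the multivariate Perron--Frobenius step you would still owe an argument for exact polynomiality when $\Phi^\ast=0$ (or $\rho=1$).
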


In particular, Theorem~\ref{t:mainR} implies that the growth of $f$ is
at most exponential.  Further, if the growth of $f$ is subexponential,
then $f$ must have polynomial growth.  This rules out many natural
combinatorial and number theoretic sequences, e.g.\ the number
of partitions~$p(n)$, or the $n$-th prime~$p_n$, cf.~\cite{FGS}.

The proof of Theorem~\ref{t:mainR} uses the geometry of integer
points in convex polyhedra; it is given in Section~\ref{s:alt}.
The theorem should be contrasted with the following asymptotic
characterization of diagonals of rational functions,
which follows from several known results:

\begin{thm}[See $\S$\ref{ss:fin-asym}]\label{t:growth-DFinite}
Let $f(n)$ be a diagonal of $P/Q$, where
$P,Q\in \zz[x_1,\ldots,x_k]$.  Suppose further that
$f(n) = \exp O(n)$ as $n\to \infty$.
Then there exists an integer~$m\ge 1$,
s.t.
$$
f(n) \, \sim \, A \ts \la^n  \ts n^\al \ts (\log n)^\be\.,
\quad \text{for all} \quad n = i \ \, \text{\rm mod} \,~m\ts,\ \ 0 \le i \le m-1\ts,
$$
where $\al\in \qqq$, $\be \in \nn$, and $\la \in \aA$.
\end{thm}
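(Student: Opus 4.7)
The plan is to reduce the statement to the well-known asymptotic classification of D-finite sequences, and then use the hypothesis of at most exponential growth to cut the classification down to the stated form.

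\textbf{Step 1: D-finiteness of the diagonal.} First I would invoke Lipshitz's theorem, which says that the diagonal of any rational function $P/Q \in \qqq(x_1,\ldots,x_k)$ is D-finite, i.e.\ satisfies a linear recurrence
\[
p_r(n)\ts f(n+r) + p_{r-1}(n)\ts f(n+r-1) + \ldots + p_0(n)\ts f(n) \, = \, 0
\]
with polynomial coefficients $p_j(n) \in \qqq[n]$. Equivalently, the generating function $F(x) = \sum f(n) x^n$ satisfies a linear ODE with polynomial coefficients over~$\qqq$.

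\textbf{Step 2: General asymptotic form for D-finite sequences.} Next, I would apply the Birkhoff--Trjitzinsky theorem on formal asymptotic expansions of solutions of linear recurrences with polynomial coefficients. This gives a finite basis of formal solutions each of the shape
\[
\exp\bigl(Q(n^{1/s})\bigr)\. \mu^{n/s}\. n^{\al}\.(\log n)^{\be}\.\bigl(A + O(n^{-1/s})\bigr),
\]
where $s\in\pp$, $Q$ is a polynomial with no constant term, $\mu\in \aA$, $\al\in\qqq$ and $\be\in\nn$. The sequence $f(n)$ itself is a $\cc$-linear combination of such asymptotic basis elements (see e.g.\ Flajolet--Sedgewick, Ch.~VIII, or Wimp--Zeilberger).

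\textbf{Step 3: Reducing to $\la^n n^\al(\log n)^\be$ using the growth hypothesis.} The hypothesis $f(n) = \exp O(n)$ forces every basis element whose coefficient in $f$ is nonzero to satisfy $\deg Q \le 1$ and the exponent of $n$ in $\exp Q(n^{1/s})$ to be~$1$; otherwise $f(n)$ would be super- or sub-exponential in an incompatible way (a standard dominant-term argument, because the $\exp(Q(n^{1/s}))$ factors with $\deg Q / s > 1$ or with $\deg Q/s < 1$ and nonconstant $Q$ cannot cancel one another in a nontrivial linear combination). Hence every surviving basis element has the form $\la_j^{\ts n}\ts n^{\al_j}(\log n)^{\be_j}$ with $\la_j\in\aA$.

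\textbf{Step 4: Isolating a single dominant term on each arithmetic progression.} Now group the terms by modulus: let $\la_1,\ldots,\la_t$ be the $\la_j$ of maximal modulus. Each can be written as $\la_j = |\la|\ts \om_j$ where $\om_j$ is an algebraic number of modulus~$1$; since $f(n)\in\zz$ and the recurrence has coefficients in~$\qqq$, Galois-invariance forces each $\om_j$ to be a root of unity (this uses Kronecker's theorem: an algebraic number all of whose conjugates have modulus~$1$ is a root of unity). Take $m$ to be the least common multiple of their orders. Then along each arithmetic progression $n \equiv i \pmod m$, each $\om_j^{\ts n}$ equals a fixed constant $\om_j^{\ts i}$, and the contribution of the top-modulus terms becomes $|\la|^n\ts n^{\al^\ast}(\log n)^{\be^\ast}$ times a constant $A_i$, with $\al^\ast$ the maximum of the $\al_j$ over~$j$ with nonzero $A_i$ and $\be^\ast$ chosen similarly. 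The subdominant basis elements contribute a term of strictly smaller exponential order, yielding $f(n) \sim A_i\ts \la^n\ts n^{\al^\ast}(\log n)^{\be^\ast}$ on each class, as required.

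\textbf{Main obstacle.} The most delicate point is Step~3: ruling out the stretched-exponential and sub-exponential basis elements $\exp(Q(n^{1/s}))$ that formally appear in the Birkhoff--Trjitzinsky expansion. In general, nontrivial cancellations among these can occur, but the quantitative bound $e^{c_1 n}\le f(n)\le e^{c_2 n}$ from the hypothesis (or just the one-sided upper bound $\exp O(n)$ combined with integrality) must be combined with the linear independence of the asymptotic scales to eliminate them. Once that is done, the remainder is essentially bookkeeping with roots of unity via Kronecker's theorem.
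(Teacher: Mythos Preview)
Your outline matches the paper's approach, which does not actually prove this theorem but defers to the literature: diagonals of rational functions are $D$-finite (the paper cites Gessel rather than Lipshitz, but the point is the same), integer-valued, and by hypothesis at most exponential, hence $G$-sequences; the asymptotic form is then quoted from the Birkhoff--Trjitzinsky/Turrittin circle of results as collected in \cite{FS,Odl,BRS,Gar}. So at the level of strategy you and the paper agree, and the paper offers no finer argument to compare against.

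That said, your Step~4 contains a genuine error. You state Kronecker's theorem as ``an algebraic number all of whose conjugates have modulus~$1$ is a root of unity.'' This is false: Kronecker's theorem requires an algebraic \emph{integer}. For instance, $(3+4i)/5$ is algebraic with minimal polynomial $5x^2-6x+5$, both conjugates $(3\pm 4i)/5$ have modulus~$1$, yet it is not a root of unity. Nothing in your setup forces the dominant $\la_j$ (or the ratios $\om_j$) to be algebraic integers: they arise as singularities of a $D$-finite generating function, which are algebraic but need not be integral. So the Galois/Kronecker step as written does not go through, and with it the reduction to a single period~$m$. The literature the paper points to handles this via the arithmetic theory of $G$-functions (size conditions on denominators, the Chudnovsky--Andr\'e results), not by a bare Kronecker argument; your sketch would need to invoke that machinery explicitly rather than Kronecker. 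Your identification of Step~3 as the main obstacle is reasonable, but Step~4 is at least as delicate.
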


In our case, the subexponential growth implies $\la=1$,
which gives asymptotics \ts $A \ts n^\al \ts (\log n)^\be$.
Theorem~\ref{t:mainR} implies further that $\al\in \nn$,
$\be=0$, and $A\in \qqq$ in that case.

\begin{Example}{\rm
The following binomial sums show that nontrivial exponents $\al \notin \zz$
and $\be>0$ can indeed appear for $f\in \cF$ and $\la >1$~:
$$
\binom{2n}{n} \, \sim \,  \frac1{\sqrt{\pi}} \.\ts 4^n \ts n^{-1/2}\,,
\qquad \quad
\sum_{k=1}^n \. \binom{2k}{k}^2 \ts 16^{n-k} \, \sim \, \frac{1}{\pi}
\. \ts 16^n \ts \log n\..
$$
}\end{Example}

Following these examples, we conjecture that~$\al$ is always half-integer:

\begin{conj}\label{conj:growth}
Let $f\in \cF$ be a tile counting function.  Then there exists
an integer~$m\ge 1$, s.t.
$$
f(n) \, \sim \, A \ts \la^n  \ts n^\al\ts (\log n)^\be\.,
\quad \text{for all} \quad n = i \ \, \text{\rm mod} \,~m\ts,\ \ 0 \le i \le m-1\ts,
$$

\nin
where $\al\in \zz/2$, $\be \in \nn$, and $\la \in \aA$.
\end{conj}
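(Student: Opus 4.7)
The plan is to pass through the balanced positive multisum description of $\cF$ provided by Theorem~\ref{t:main-cbb} and reduce the asymptotics of $f$ to a multivariate Laplace/saddle-point problem. By the Main Theorem and Theorem~\ref{t:main-cbb}, write
$$f(n) \. = \. g_1(n) \. + \. \ldots \. + \. g_k(n)\ts,$$
where each $g_j(n)=\sum_{v\in \zz^{d_j}}\prod_{i=1}^{r_j}\al_{ij}!/(\be_{ij}!\ga_{ij}!)$ is balanced, i.e.\ $\al_{ij}=\be_{ij}+\ga_{ij}$. Because the lexicographic ordering on $(\la,\al,\be)\in \aA\times(\zz/2)\times \nn$ used to select the leading term in Theorem~\ref{t:growth-DFinite} is closed under maxima over finite lists, and since the dominant contribution cannot be canceled due to the positivity of each summand, it suffices to prove the conjectured asymptotic form for each $g_j$ individually, after restriction to a common arithmetic progression.

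Fix one summand $g_j$ and apply Stirling's formula to each balanced ratio. Since $\al_i=\be_i+\ga_i$ one has
$$\frac{\al_i!}{\be_i!\ga_i!} \. = \. \frac{1}{\sqrt{2\pi}}\.\sqrt{\frac{\al_i}{\be_i\ts\ga_i}}\. \exp\bigl(\al_i\ts H(\be_i/\al_i)\bigr)\.\bigl(1+O(1/n)\bigr)\ts,$$
with $H$ the binary entropy. Substituting $v=nu$ and approximating the lattice sum by its Riemann integral over the polytope $P\ssu\rr^{d_j}$ cut out by $\be_{ij},\ga_{ij}\ge 0$ gives
$$g_j(n) \. \sim \. n^{d_j-r_j/2}\int_{P}R(u)\exp\bigl(n\ts\Phi(u)\bigr)\.du\ts,$$
where $\Phi$ is a concave sum of rescaled binary entropies with integer-linear arguments and $R$ is an explicit algebraic function. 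The location of the maximum $u^*\in P$ of $\Phi$, together with its Hessian, controls the answer via the standard multidimensional Laplace method. In the non-degenerate case an interior critical point contributes $A\ts\la^n n^{(d_j-r_j)/2}$ with $\la = \exp(\Phi(u^*))$; a non-degenerate critical point lying on a face of codimension $c$ contributes $A\ts\la^n n^{(d_j-r_j-c)/2}$ after partial Gaussian integration normal to the face. In every such case the exponent of $n$ is a half-integer. The value $\la$ is algebraic because $u^*$ has rational coordinates (it is the unique critical point of a strictly concave rational combination of entropies with integer-linear arguments), so $\la$ is a product of rational powers of positive rationals.

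The main obstacle lies in the degenerate cases: namely, when the Hessian of $\Phi$ at $u^*$ has positive nullity, when multiple maxima of $\Phi$ on different faces of $P$ contribute at the same exponential rate, or when the prefactor $R$ has a zero or a pole at $u^*$ matching the order of the critical point. Each of these is the precise mechanism producing the $(\log n)^\be$ factors in Theorem~\ref{t:growth-DFinite}, but a priori only ensures $\al\in\qqq$. The challenge is to show that in \emph{all} such degenerate scenarios the contribution still has $\al\in\zz/2$. The key tool should be the rigid structure of $\Phi$: each component term is a rescaled binary entropy in integer-linear arguments, so the Hessian of $\Phi$ at any rational point is a $\qqq$-rational combination of rank-one matrices, which should constrain the Newton polyhedron of $\Phi-\Phi(u^*)$ to have half-integer exponents; combining this with Varchenko's theorem on oscillatory integrals should yield the desired half-integrality. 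Finally, interference between maxima of equal modulus but roots-of-unity phases is what forces passage to the arithmetic progression $n\equiv i\pmod m$; one expects $m$ to be the least common denominator of the rational coordinates of the critical points of $\Phi$, in parallel with the Pemantle--Wilson analysis of diagonals of positive rational functions.
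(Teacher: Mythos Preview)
This statement is labelled as a \emph{conjecture} in the paper; the authors explicitly present it as open (see the discussion after Theorem~\ref{t:growth-DFinite} and the remarks in~$\S$\ref{ss:fin-asym}). There is no proof in the paper to compare against.

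Your outline is a reasonable heuristic for why half-integer exponents are generic, but it is not a proof, and you say so yourself. In the non-degenerate interior case the Laplace method indeed produces a half-integer power of~$n$, but the conjecture must hold for \emph{every} $f\in\cF$, including those whose entropy functional~$\Phi$ has a degenerate Hessian at the maximum, or whose maximum sits on a boundary face of~$P$ where the algebraic prefactor~$R$ vanishes or blows up. You defer these cases to ``Varchenko's theorem'' together with a hoped-for constraint on the Newton polyhedron of $\Phi-\Phi(u^*)$, but nothing concrete is carried out: the fact that~$\Phi$ is a sum of rescaled binary entropies in integer-linear arguments does not by itself rule out a principal face yielding an exponent in, say, $\tfrac13\zz\smallsetminus\tfrac12\zz$. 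Likewise, your claim that the critical point~$u^*$ has rational coordinates is unsupported: the first-order conditions are linear relations among the quantities $\log(\be_i/\ga_i)$, which yield monomial identities among the linear forms but do not in general force $u^*\in\qqq^{d_j}$. (The algebraicity of~$\la$ is already known from Theorem~\ref{t:growth-DFinite}, so this gap is harmless for that particular conclusion, but your stated argument for it is not valid.)

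In short, what you have written is a sketch of why the conjecture is plausible in the generic case, followed by an honest inventory of the remaining obstructions; those obstructions are not resolved, and the conjecture remains open.
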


See $\S$\ref{ss:fin-asym} for a brief overview of related
asymptotic results.

\medskip

\subsection{Catalan numbers}\label{ss:app-cat}
Recall the Catalan numbers:
$$
C_n\, =\, \frac{1}{n+1}{2n\choose n}\.. 
$$
We make the following mesmerizing  conjecture.

\begin{conj}\label{conj:cat}
The Catalan numbers $C_n$ is \emph{not} a tile counting function.
\end{conj}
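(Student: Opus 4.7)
The plan is to argue by contradiction. Suppose $C_n \in \cF$. By Main Theorem~\ref{t:main-countR}, the Catalan numbers would then be simultaneously a diagonal of some $\nn$-rational series $F(x_1,\ldots,x_k)\in\cR_k$ and a finite sum of balanced positive multisums, i.e.\ an element of $\cBB$ (Theorem~\ref{t:main-cbb}). The strategy is to exploit the rigid positivity structure in both representations and to derive a contradiction from the precise asymptotic \ts $C_n \sim \frac{1}{\sqrt{\pi}}\.\ts 4^n n^{-3/2}$.

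First I would sharpen Theorem~\ref{t:mainR}. On each arithmetic subsequence $C_{mn+i}$ the growth is purely exponential of rate $4^m$ with power-law exponent $-3/2$, which is consistent with Conjecture~\ref{conj:growth}. What one actually needs is a lower bound of the form $\alpha \ge -(k-1)/2$ coming from a multivariate saddle-point analysis of $[x_1^n\cdots x_k^n]\ts F$ in the spirit of Pemantle--Wilson. The $\nn$-rationality of $F$, via its weighted-automaton interpretation, should restrict the geometry of the critical variety (a Perron--Frobenius type constraint placing the dominant singularity on the positive real locus with a nondegenerate Hessian), and this in turn should force a lower bound on $\alpha$ in terms of $k$. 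For $C_n$ this already demands $k \ge 4$, and the analysis would then need to be iterated to rule out each such $k$.

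Second, I would analyse the leading constant. A Pemantle--Wilson style expansion produces a leading factor of the form \ts $c\cdot (2\pi)^{-(k-1)/2}\ts|H|^{-1/2}$, where $c$ and the Hessian determinant $H$ are algebraic numbers determined by the automaton data encoding $F\in\cR_k$. The candidate value for $C_n$ is $1/\sqrt{\pi}$, and the hoped-for contradiction is that no choice of $k$ and no algebraic data compatible with an element of $\cR_k$ produces this exact transcendence--theoretic shape. This is where I expect the genuinely new input to be required, and where the structural difference between $\binom{2n}{n}\in\cF$ and $C_n$ should manifest.

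The principal obstacle, and the reason the statement is offered as a conjecture rather than a theorem, is precisely this last transcendence step: ruling out all algebraic realisations of $1/\sqrt{\pi}$ across the infinite family of $\nn$-rational diagonals. As a backup, one could pursue $\cF=\cBB$ arithmetically: the $2$-adic valuation $v_2(C_n)=s_2(n+1)-1$ exhibits a self-similar Stern-type pattern, and one might try to prove this incompatible with the congruence behaviour of any finite sum of balanced positive multisums supported on bounded polytopes. In either direction the heart of the matter is to isolate a new invariant of tile counting functions that is sensitive to the \emph{Dyck-path} constraint hidden in the factor $1/(n+1)$, and which $\binom{2n}{n}\in\cF$ does not obstruct; I expect that finding such an invariant is where the bulk of the work must go.
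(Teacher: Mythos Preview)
The statement you are attempting to prove is labelled a \emph{Conjecture} in the paper, and indeed the paper offers no proof of it. So there is nothing to compare your attempt against: the paper leaves Conjecture~\ref{conj:cat} open and instead proves three propositions (Propositions~\ref{cat4},~\ref{cat6},~\ref{cat7}) designed to show that the most natural attacks are obstructed.

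Your proposal is not a proof but a research plan, and you say as much. Two of your suggested routes are in fact explicitly blocked by results the paper proves. Your ``backup'' idea of using the $2$-adic valuation $v_2(C_n)=s_2(n+1)-1$ to separate $C_n$ from every finite sum of balanced positive multisums is ruled out by Proposition~\ref{cat7}: for every prime~$p$ there is an $f\in\cF$ with $\text{ord}_p(f(n))=\text{ord}_p(C_n)$, so no $p$-adic valuation pattern can distinguish Catalan numbers from tile counting functions. Likewise, any attempt to use only the \emph{order} of growth $4^n n^{-3/2}$ is neutralised by Proposition~\ref{cat4}, which produces $f\in\cF$ with $f(n)\sim A\ts C_n$ for $A$ arbitrarily close to~$1$.

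Your main line --- pinning down the leading constant $1/\sqrt{\pi}$ via Pemantle--Wilson asymptotics and then invoking a transcendence obstruction --- is not refuted by the paper, but the paper isolates exactly this as a separate, stronger open problem (Conjecture~\ref{conj:cat-asym}: no $f\in\cF$ is asymptotically Catalan). The discussion in~$\S$\ref{ss:fin-ord} and the surrounding final remarks make clear that the authors regard this as comparable in difficulty to showing $1/\pi$ is not a period; Theorem~\ref{t:hypo} shows that quite exotic constants (e.g.\ involving $\Gamma(5/8)\Gamma(7/8)$) do arise as leading constants for $f\in\cF$, so there is no cheap algebraicity argument available. In short, your sketch correctly identifies where the difficulty lies, but it does not supply the missing invariant, and two of its three concrete suggestions are already known (in this very paper) not to work.
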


Several natural approaches to the conjecture can be proved not to work.
First, we show that the naive asymptotic approach cannot be used to prove
Conjecture~\ref{conj:cat}.

\begin{prop}\label{cat4}
For every $\ep>0$, there exists a tile counting function $f\in \cF$, s.t.
$$
f(n) \, \sim \, A\cdot C_n \qquad \text{for some} \quad \ A \in (1-\ep,1+\ep)\..
$$
\end{prop}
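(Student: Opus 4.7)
The plan is to construct a tile counting function asymptotic to $A_0\cdot C_n$ for some fixed $A_0>0$, then tune the constant via shifts and positive-integer combinations (using Corollary~\ref{c:add-mult}) to bring it within $\ep$ of~$1$.

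Start from $g(n)=\binom{4n}{n,n,n,n}$, which is the diagonal of $G:=1/(1-x_1-x_2-x_3-x_4)\in\cR_4$, so $g\in\cF$ by Main Theorem~\ref{t:main-countR}. Stirling gives $g(n)\sim B\cdot 256^n/n^{3/2}$ with $B=1/\sqrt{2\pi^{3}}$. Since $256=4^4$ but $C_n\sim 4^n/(\sqrt{\pi}\,n^{3/2})$, the exponential base is too large; I reduce it via the dilation $x_i\mapsto x_i^4$ (which preserves $\cR_k$, as noted after the definition of $\cR_k$) together with monomial shifts. Define
\[
H(x_1,\ldots,x_4)\,:=\,\sum_{j=0}^{3}\,4^j\,(x_1 x_2 x_3 x_4)^j\cdot G(x_1^4,x_2^4,x_3^4,x_4^4)\,\in\,\cR_4,
\]
whose diagonal is
\[
h(n)\,=\,\sum_{j=0}^{3}\,4^j\cdot g\!\bigl((n-j)/4\bigr)\cdot\bigl[\,4\mid n-j\,\bigr]\,\in\,\cF.
\]
For $n=4\ell+j$ with $j\in\{0,1,2,3\}$ only the $j$-th summand survives, giving $h(n)=4^j g(\ell)\sim 4^j\cdot B\cdot 256^\ell/\ell^{3/2}$. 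Using $4^j\cdot 256^\ell=4^n$ and $(n/\ell)^{3/2}\to 4^{3/2}=8$, the $j$-dependence cancels and $h(n)\sim 8B\cdot 4^n/n^{3/2}=A_0\cdot C_n$ uniformly in~$n$, with $A_0=8B\sqrt{\pi}=4\sqrt{2}/\pi$.

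Now tune the constant. For any $k\in\nn$, the shift $h_k(n):=h(n-k)$ is the diagonal of $(x_1 x_2 x_3 x_4)^k\cdot H\in\cR_4$, hence in $\cF$; and since $C_{n-k}/C_n\to 4^{-k}$, we get $h_k\sim (A_0/4^k)\cdot C_n$. By Corollary~\ref{c:add-mult}, any finite nonnegative integer combination $f(n)=\sum_{k=0}^{K}c_k\,h_k(n)$ lies in $\cF$, with $f\sim A\cdot C_n$ where $A=A_0\sum_{k=0}^{K}c_k/4^k$. The set $\bigl\{\sum_{k=0}^{K}c_k/4^k: K\in\nn,\ c_k\in\nn\bigr\}$ is dense in $[0,\infty)$ (truncate a base-$4$ expansion of $1/A_0$), so for any $\ep>0$ the $c_k$ can be chosen to make $|A-1|<\ep$.

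The main obstacle is producing $h$ with a \emph{uniform} asymptotic across all residues $n\bmod 4$. A naive fourth dilation of $g$ has values only on multiples of~$4$ and vanishes elsewhere, so it cannot satisfy $f\sim A\cdot C_n$ in the ordinary sense. The weights $4^j$ in the definition of~$H$ precisely compensate for the factor $4^{-j}$ arising from shifting by~$j$ (since $256^\ell=4^{n-j}$ and $(n/\ell)^{3/2}\to 4^{3/2}=8$), ensuring that the ratio $h(n)/C_n$ has a limit independent of $n\bmod 4$; verifying this compensation is the technical heart of the construction.
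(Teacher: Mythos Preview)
Your argument is correct and follows the same two-stage blueprint as the paper's proof: first build a tile counting function asymptotic to $A_0\,C_n$ by gluing together residue-class pieces with geometric weights $4^j$ so that the limit is uniform in $n$, then tune the constant toward~$1$ using shifts (which divide the constant by powers of~$4$) and nonnegative integer combinations. The only differences are cosmetic: the paper works through the $\cB$ characterization with the cube $\binom{2m}{m}^3$ and period~$3$ (obtaining $A_0=3\sqrt{3}/\pi$), whereas you work through the $\cN$ characterization with the multinomial $\binom{4m}{m,m,m,m}$ and period~$4$ (obtaining $A_0=4\sqrt{2}/\pi$), and the paper's final tuning uses a single shift times an integer rather than a base-$4$ expansion.
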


In a different direction, we show that Conjecture~\ref{conj:cat} does
not follow from elementary number theory considerations.

\begin{prop}\label{cat6}
For every $m\in \nn$, there exists a tile counting function
$f\in \cF$, s.t.  $f(n)= C_n \mbox{ mod }m$.
\end{prop}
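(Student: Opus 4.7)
The plan is to bypass the non-closure of $\cF$ under subtraction by invoking the classical Catalan identity
$$C_n\,=\,\binom{2n}{n}-\binom{2n}{n+1}$$
together with the congruence $-1\equiv m-1\pmod{m}$, and thereby produce $f\in\cF$ whose residue modulo~$m$ agrees with that of $C_n$.

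First, I would place both $\binom{2n}{n}$ and $\binom{2n}{n+1}$ inside $\cF$. Each is a single binomial coefficient whose numerator and denominator are integer linear functions of~$n$, with no auxiliary summation variables (i.e.\ $d=0$ in the definition of~$\cB$), so both lie in the class $\cB$ of binomial multisums. By Main Theorem~\ref{t:main-countR} ($\cB=\cF$), both are tile counting functions; for $\binom{2n}{n}$ this was already noted in Example~\ref{ex:list}, and the case $\binom{2n}{n+1}$ is entirely analogous.

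Next, for $m\ge 2$, I would set
$$f(n)\,:=\,\binom{2n}{n}\,+\,(m-1)\binom{2n}{n+1}\ts,$$
that is, one copy of $\binom{2n}{n}$ added to $m-1$ copies of $\binom{2n}{n+1}$. By Corollary~\ref{c:add-mult}, the class $\cF$ is closed under addition, and so $f\in\cF$. Reducing modulo~$m$,
$$f(n)\,\equiv\,\binom{2n}{n}-\binom{2n}{n+1}\,=\,C_n\pmod{m}\ts,$$
which is the conclusion of the proposition. The degenerate case $m=1$ is handled by $f\equiv 0$.

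There is no genuine obstacle in this argument; once the machinery $\cB=\cF$ and the closure of $\cF$ under addition are in hand, the proof is a one-line congruence. The only observation worth isolating is the sign-flip $-1\equiv m-1\pmod{m}$, which converts the subtraction in the Catalan identity into an addition of tile counting functions and thus allows us to use closure under addition in place of the unavailable closure under subtraction.
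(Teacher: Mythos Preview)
Your proof is correct and follows essentially the same approach as the paper: both define $f(n)=\binom{2n}{n}+(m-1)\binom{2n}{n+1}$, observe that each summand lies in~$\cB=\cF$ so that $f\in\cF$ by closure under addition, and then use the identity $C_n=\binom{2n}{n}-\binom{2n}{n+1}$ to conclude $f(n)\equiv C_n\pmod m$.
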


\begin{prop}\label{cat7}
For every prime~$p$, there exists a tile counting function $f\in \cF$,
s.t.  $\text{ord}_p \bigl(f(n)\bigr) =\text{ord}_p (C_n)$, where
$\text{ord}_p(m)= \max\{d \. :  \. p^d|m\}$.
\end{prop}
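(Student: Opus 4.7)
The plan is to exploit the two identities \ts $C_n = \binom{2n}{n}/(n+1) = \binom{2n+1}{n}/(2n+1)$ \ts together with the elementary fact that \ts $\gcd(n+1,\ts 2n+1)=1$ \ts for every~$n$ (since $2(n+1)-(2n+1)=1$). This coprimality means a prime~$p$ can divide at most one of the two linear denominators $n+1$ and $2n+1$, so on each residue class modulo~$p$ we can choose a representation of~$C_n$ whose denominator is a $p$-adic unit.

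Concretely, I would define
\[
f(n) \, := \,  \binom{2n}{n}\,\chi_A(n) \, + \, \binom{2n+1}{n}\,\chi_B(n)\ts,
\]
where $\chi_A$ and $\chi_B$ are the indicator functions of the arithmetic progressions $A=\{n \. : \. p \nmid n+1\}$ and $B=\{n \. : \. p \mid n+1\}$, respectively. To confirm $f\in\cF$, I would note that $\chi_A,\chi_B$ are periodic and therefore in~$\cF$ by Corollary~\ref{c:posapp}(ii); that both $\binom{2n}{n}$ and $\binom{2n+1}{n}$ are single-term binomial multisums (take $r=1$, $d=0$ in the definition of~$\cB$ in~$\S$\ref{ss:three-binom}), so they lie in $\cB\subseteq\cF$ by Main Theorem~\ref{t:main-countR}; and that $\cF$ is closed under pointwise addition and multiplication by Corollary~\ref{c:add-mult}.

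The $p$-adic valuation check then splits into two cases. On~$A$ we have $f(n)=\binom{2n}{n}=(n+1)\ts C_n$ with $\text{ord}_p(n+1)=0$, giving $\text{ord}_p(f(n))=\text{ord}_p(C_n)$. On~$B$, coprimality forces $\text{ord}_p(2n+1)=0$, and $f(n)=\binom{2n+1}{n}=(2n+1)\ts C_n$ again yields $\text{ord}_p(f(n))=\text{ord}_p(C_n)$. The case $p=2$ is handled uniformly within the same formula: $2n+1$ is automatically odd on~$B$, and $n+1$ is odd on~$A$.

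I do not foresee a substantive obstacle. The only point requiring any thought is locating the right partition of~$\nn$; once one notices the ``which denominator is a $p$-unit'' dichotomy afforded by $\gcd(n+1,\ts 2n+1)=1$, the rest is a mechanical application of the closure properties already in place, and no appeal to the more delicate parts of the main theorem (such as the equivalence with $\cB$ beyond single-binomial terms) is required.
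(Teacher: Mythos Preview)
Your argument is correct and constitutes a genuinely different construction from the paper's. The paper writes a single formula
\[
f(n)\,=\,\binom{2n}{n}\,+\,(p^{2n}-1)\binom{2n}{n+1}\,=\,C_n\,+\,p^{2n}\binom{2n}{n+1},
\]
and observes that \ts $p^{2n}>C_n$ \ts forces \ts $\text{ord}_p\bigl(p^{2n}\binom{2n}{n+1}\bigr)\ge 2n>\text{ord}_p(C_n)$, so the correction term cannot disturb the $p$-adic valuation. The closure input there is Corollary~\ref{c:posapp}\,(v), that $m^n-1\in\cF$.

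Your route instead uses the coprimality $\gcd(n+1,\ts 2n+1)=1$ to choose, for each residue class mod~$p$, a $p$-unit multiple of~$C_n$: either $(n+1)\ts C_n=\binom{2n}{n}$ or $(2n+1)\ts C_n=\binom{2n+1}{n}$. The closure input you need is the lighter Corollary~\ref{c:posapp}\,(ii) for periodic indicators, plus single binomials in~$\cB$. What the paper's version buys is a uniform closed formula with no case split; what yours buys is that the multipliers are only linear (so the function stays of the same growth order as $C_n$), and the valuation argument is elementary rather than relying on the crude size bound $p^{2n}>C_n$. Both are short and self-contained.
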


The results in this subsection are proved in Section~\ref{s:tech-apps}.
See $\S$\ref{ss:fin-ord} for more on the last proposition.

\medskip

\subsection{Hypergeometric functions}\label{ss:app-hype}
We use the following special case of the
\emph{generalized hypergeometric function}:
$$
\ _{p+1}F_{p}(a_1,\ldots,a_p,1;b_1,\ldots,b_p;r) \,\, = \,
\sum_{m=0}^\infty\prod_{k=0}^{m-1}\frac{(k+a_1)(k+a_2)\ldots(k+a_p)\ts r}{(k+b_1)(k+b_2)\ldots(k+b_p)}.
$$

Let $p$ be a positive integer and $\la=(\la_1,\ldots,\la_\ell)\vdash p$ be
a partition of~$p$.
Denote by $\Ups_\la$ the following multiset of $p$ rational numbers:
$$
\Ups_\la \, = \, \bigcup_{i=1}^\ell \. \left\{\frac{1}{\lambda_i},
\frac{2}{\lambda_i},\ldots,\frac{\lambda_i-1}{\lambda_i},1\right\}\ts.
$$
For example, if $\lambda=(5,4,2,1)\vdash 12$, then
$$\Ups_\la \, = \,
\left\{\frac{1}{5},\frac{2}{5},\frac{3}{5},\frac{4}{5},1,\frac{1}{4},
\frac{1}{2},\frac{3}{4},1,\frac{1}{2},1,1\right\}\ts.
$$

\begin{thm}\label{t:hypo}
Let $\mu=(\mu_1,\ldots,\mu_k)\vdash p$, and
let $\nu=(\nu_1,\ldots,\nu_\ell)\vdash p$ be a \emph{refinement}
of~$\mu$.  Write
$$\Ups_\mu = \{a_1,\ldots,a_p\}, \quad \Ups_\nu = \{b_1,\ldots,b_p\}\ts,
$$
and fix $r=r_1/r_2\in\qqq$.  Denote \ts
$A=\ _{p+1}F_{p}(a_1,\ldots,a_p,1;b_1,\ldots,b_p;r)$,
and suppose that $A<\infty$ is well defined.  Finally, let
$c\in\nn$ be a multiple of
all prime factors of $\mu_1\cdot\mu_2\cdot\ldots\cdot\mu_k\cdot r_2$.
Then, there exists a tile counting function $f\in \mathcal{F}$, s.t.
\ts $f(n) \ts \sim \ts A \ts c^n\ts.$
\end{thm}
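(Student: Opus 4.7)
The plan is to exhibit $f$ as an explicit binomial multisum built from a truncation of the hypergeometric series, and then to invoke Main Theorem~\ref{t:main-countR} to place $f$ in~$\mathcal{F}$. The first step is to apply Gauss's multiplication identity $\prod_{k=1}^{n}(k/n)_m=(nm)!/n^{nm}$ for the Pochhammer symbol, which together with the definition of~$\Ups_\la$ gives $\prod_{a\in\Ups_\la}(a)_m=\prod_i(\la_i m)!/\la_i^{\la_i m}$. Applied to both $\Ups_\mu$ and $\Ups_\nu$ in the $m$-th summand, this converts
$$
t_m \. := \. \prod_{k=0}^{m-1}\frac{\prod_i(k+a_i)\ts r}{\prod_j(k+b_j)} \. = \. r^m\cdot\frac{N^m}{M^m}\cdot\frac{\prod_i(\mu_i m)!}{\prod_j(\nu_j m)!}\ts,
$$
where $M:=\prod_i\mu_i^{\mu_i}$ and $N:=\prod_j\nu_j^{\nu_j}$. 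Since $\nu$ refines~$\mu$, the parts of $\nu$ group into blocks $S_i$ with $\sum_{j\in S_i}\nu_j=\mu_i$, so the factorial ratio is the product of multinomials $\prod_i\binom{\mu_i m}{(\nu_j m)_{j\in S_i}}$, in particular a nonnegative integer.

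Since $c$ contains every prime factor of $\mu_1\cdots\mu_k\cdot r_2$, and hence of $Mr_2$, the next step is to pick $D\in\nn$ with $Mr_2\mid c^D$ and set $\sigma:=r_1Nc^D/(r_2M)\in\nn$, implicitly assuming $r\ge 0$ (forced by $f(n)/c^n\to A$ with $f(n)\in\nn$). Then $c^{Dm}\ts t_m=\sigma^m\prod_i\binom{\mu_i m}{(\nu_j m)_{j\in S_i}}\in\nn$, and the candidate to try is
$$
f(n) \. := \. \sum_{m\ge 0}\ts c^{n-Dm}\ts\sigma^m\ts\prod_i\binom{\mu_i m}{(\nu_j m)_{j\in S_i}}\ts.
$$
Expanding each $c^{n-Dm}$ and $\sigma^m$ as iterated binomial sums (via the standard identity $K^k=\sum_{v\in\zz^{K-1}}\binom{k}{v_1}\binom{k-v_1}{v_2}\cdots\binom{k-v_1-\cdots-v_{K-2}}{v_{K-1}}$) and each multinomial as a product of ordinary binomials places $f(n)$ in the template of Main Theorem~\ref{t:main-new}, so $f\in\mathcal{B}=\mathcal{F}$. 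The binomial convention of~$\S$\ref{ss:def-binom} makes the $c^{n-Dm}$ factor vanish for $n-Dm\le -2$, so the sum is effectively truncated at $m\le n/D$.

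For the asymptotic, $f(n)/c^n$ differs from the partial sum $\sum_{0\le m\le n/D}t_m$ only by a vanishing correction from the convention $\binom{-1}{0}=1$ applied at $m=(n+1)/D$ when this is an integer; since the partial sums converge to~$A$ by hypothesis, this gives $f(n)\sim A\ts c^n$. The main obstacle will be bookkeeping in the second step: confirming that after the iterated expansions every binomial argument is genuinely integer-linear in $n$ and the auxiliary summation variables, and that the implicit non-negativity and summation constraints are all enforced by vanishing of the respective binomials. A separate subtlety is the case $r<0$ with $A$ still nonnegative (by sign cancellation), which this positivity-based construction does not address and would presumably need an additional reduction, for example via an Euler contiguous-function transform to the $r\ge 0$ case.
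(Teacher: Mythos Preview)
Your proposal is correct and follows essentially the same approach as the paper's proof: both construct $f(n)$ as a truncated hypergeometric partial sum times $c^n$, rewrite the summand as a product of multinomial coefficients times integer powers, expand the powers as iterated binomial sums, and invoke the Main Theorem. Your use of Gauss's multiplication formula and the single exponent~$D$ to absorb the denominator $Mr_2$ into $c^D$ is slightly more streamlined than the paper's version, which first computes the term ratio $g(\ell+1)/g(\ell)$ directly and obtains the specific base $c=\mu_1^{\mu_1}\cdots\mu_k^{\mu_k}r_2$, then performs a separate horizontal-rescaling argument at the end to reach an arbitrary admissible~$c$; your observation about the implicit assumption $r\ge 0$ is also apt, as the paper's construction has the same unstated restriction.
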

The proof of Theorem~\ref{t:hypo} is given in Section~\ref{s:tech-apps}.

\begin{cor}
There exists a tile counting function $f\in\mathcal{F}$, such that
$$
f(n)\sim\frac{\sqrt{\pi}}{\Gamma(5/8)\ts \Gamma(7/8)}\, 128^{n}.
$$
\end{cor}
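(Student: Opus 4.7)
The plan is to apply Theorem~\ref{t:hypo} to a carefully chosen quadruple $(\mu,\nu,r,c)$, reducing the problem to a classical closed-form evaluation of $_2F_1$. The shape of the target constant $\sqrt{\pi}/(\Gamma(5/8)\ts\Gamma(7/8))$, namely $\Gamma(1/2)/\bigl(\Gamma(\tfrac{a+1}{2})\ts\Gamma(\tfrac{b+1}{2})\bigr)$ with $a+b+1=2$, strongly suggests Gauss's second summation theorem
$$
\ _2F_1\!\left(a,\,b;\,\tfrac{a+b+1}{2};\,\tfrac12\right) \, = \, \frac{\Gamma(1/2)\ts\Gamma\bigl(\tfrac{a+b+1}{2}\bigr)}{\Gamma\bigl(\tfrac{a+1}{2}\bigr)\ts\Gamma\bigl(\tfrac{b+1}{2}\bigr)}\,.
$$
Specializing to $a=\tfrac14$, $b=\tfrac34$ gives $(a+b+1)/2=1$, $(a+1)/2=5/8$, $(b+1)/2=7/8$, and hence
$$
\ _2F_1\!\left(\tfrac14,\,\tfrac34;\,1;\,\tfrac12\right) \, = \, \frac{\sqrt{\pi}}{\Gamma(5/8)\ts\Gamma(7/8)}\ts.
$$

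Next I would take $p=4$ with $\mu=(4)$ and $\nu=(2,1,1)$; plainly $\nu \vdash 4$ is a refinement of $\mu \vdash 4$ since $2+1+1=4$. The associated multisets are
$$
\Ups_\mu \. = \. \{1/4,\,1/2,\,3/4,\,1\}\ts, \qquad \Ups_\nu \. = \. \{1/2,\,1,\,1,\,1\}\ts,
$$
so one copy of $1/2$ and one copy of $1$ cancel in the product form of the hypergeometric, yielding
$$
\ _5F_4\bigl(\tfrac14,\tfrac12,\tfrac34,1,1;\,\tfrac12,1,1,1;\,r\bigr) \, = \, \sum_{m\ge 0} \frac{(1/4)_m\,(3/4)_m}{(m!)^2}\, r^m \, = \, \ _2F_1\!\left(\tfrac14,\tfrac34;\,1;\,r\right)\ts.
$$
Setting $r=1/2$ (so $r_1=1$, $r_2=2$) makes the left-hand side well defined by absolute convergence and, by the identity above, equal to $A=\sqrt{\pi}/(\Gamma(5/8)\ts\Gamma(7/8))$.

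It remains to verify the divisibility hypothesis on $c$. Since $\mu_1\cdot r_2 = 4\cdot 2 = 8$ has only $2$ as a prime factor, any positive integer divisible by $2$ is admissible, and in particular $c=128=2^7$. Theorem~\ref{t:hypo} then furnishes a tile counting function $f\in\mathcal{F}$ with $f(n)\sim A\ts c^n$, which is exactly the claim.

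This plan has no substantive obstacle: the hypergeometric cancellation is mechanical and the arithmetic condition on $c$ is trivially met. The one piece of insight is to use Gauss's second summation at $z=1/2$ rather than the more familiar Gauss summation at $z=1$, whose matching parameters $\{-1/8,-3/8\}$ would lie outside any $\Ups_\mu$; together with the matching refinement $\nu=(2,1,1)$, this is what forces the conclusion of Theorem~\ref{t:hypo} to land on precisely the target constant.
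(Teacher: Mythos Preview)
Your proof is correct and follows exactly the same route as the paper: choose $p=4$, $\mu=(4)$, $\nu=(2,1,1)$, $r=1/2$, $c=128$, reduce $\ _5F_4$ to $\ _2F_1(\tfrac14,\tfrac34;1;\tfrac12)$, and evaluate. The only addition is that you explicitly identify the closed-form evaluation as an instance of Gauss's second summation theorem, which the paper simply quotes without naming.
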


\begin{proof}
Let $p=4$, let $\mu=(4)$, $\nu=(2,1,1)$, and set $r=1/2$.
Then $\Ups_\mu=\{1/4,1/2,3/4,1\}$ and $\Ups_\nu=\{1/2,1,1,1\}$.
Since any even $c$ is allowed, we can take $c=128$.  Then,
by Theorem~\ref{t:hypo}, there exists
$f\in \cF$, s.t.
$$
\frac{f(n)}{128^n} \, \sim \ _5F_{4}\Big(\frac{1}{4},\frac{1}{2},\frac{3}{4},1,1;\frac{1}{2},1,1,1;\frac{1}{2}\Big)
\, =  \ _2F_{1}\Big(\frac{1}{4},\frac{3}{4};1;\frac{1}{2}\Big) \, = \, \frac{\sqrt{\pi}}{\Gamma(5/8)\ts\Gamma(7/8)}\.,
$$
as desired.
\end{proof}

Since the proof of Theorem~\ref{t:hypo} is constructive, we can obtain an
explicit tile counting function~$f(n)$ as in the corollary:
$$f(n)\, = \, \sum_{k=0}^{n} \. {4k \choose k}{3k \choose k} \. 128^{n-k} \,
\sim\, \frac{\sqrt{\pi}}{\Gamma(5/8)\ts\Gamma(7/8)}\, 128^{n}\ts.
$$
The corollary and the theorem suggest that there is no easy characterization
of constants~$A$ in Conjecture~\ref{conj:growth}, at least not enough to
obtain Conjecture~\ref{conj:cat} this way.  Here is yet another quick
variation on the theme.

\begin{cor}\label{c:hypo-transc-6}
There exists a tile counting function $f\in\mathcal{F}$, such that
$$f(n)\, \sim\, \frac{\Gamma(3/4)^3}{\sqrt[3]{2}\,\pi} \, 6^n.$$
\end{cor}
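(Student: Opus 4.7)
The plan is to imitate the proof of the preceding corollary. I will look for a partition $\mu\vdash p$ all of whose parts are of the form $2^a 3^b$, a refinement $\nu\vdash p$, and a rational $r = r_1/r_2$ with $r_2$ likewise of that form, so that
\[
A \ := \ {}_{p+1}F_p\bigl(\Ups_\mu,\ts 1;\ \Ups_\nu;\ r\bigr) \ = \ \frac{\Gamma(3/4)^3}{\sqrt[3]{2}\,\pi}\ts.
\]
The prime-factor condition guarantees that $c=6$ is a permissible choice in Theorem~\ref{t:hypo}, and the theorem then produces a tile counting function $f\in\cF$ with $f(n)\sim A\cdot 6^n$. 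An explicit binomial multisum for $f(n)$ can be read off from the constructive proof of Theorem~\ref{t:hypo}, in the same manner as the formula displayed just after the preceding corollary.

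To select $\mu,\nu,r$ I reason as follows. Since $\Gamma(3/4)$ arises most naturally from parameters $1/4$ and $3/4$, I would begin by including a part of size~$4$ in $\mu$, which generates the subset $\{1/4,1/2,3/4,1\}\subset\Ups_\mu$. The factor $\sqrt[3]{2}$ in the target signals that Gauss' triplication formula enters the simplification, suggesting that $\mu$ should also contain a part of size~$3$ (or~$6$), or that $r_2$ be divisible by~$3$. The refinement $\nu$ is then chosen so that after cancellation of parameters common to $\Ups_\mu\cup\{1\}$ and $\Ups_\nu$, the residual hypergeometric falls under a classical summation theorem --- second Gauss, Watson's, Whipple's, Dixon's, a Clausen-type square, or an Orr product identity --- whose closed form is finally recast in the desired shape using the reflection identity $\Gamma(1/4)\ts\Gamma(3/4)=\pi\sqrt{2}$ together with Gauss' multiplication formulas.

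The principal obstacle is twofold. First, the target $A\approx 0.465$ is strictly less than~$1$, whereas for $r>0$ the series ${}_{p+1}F_p(\Ups_\mu,1;\Ups_\nu;r)$ has all nonnegative terms and value at least~$1$; this forces $r<0$ (say $r=-1/2$, still compatible with $c=6$), which removes the second Gauss identity used in the preceding corollary from the toolbox and restricts me to evaluations of Kummer, Bailey, or Whipple type at $z=-1$. Second, each of the standard summation theorems produces a product of four gamma factors, so collapsing this to an odd cube $\Gamma(3/4)^3$ requires a delicate match of parameters: the most plausible route is a Kummer-type formula for ${}_2F_1(\cdot,\cdot;\cdot;-1)$ with cancellations induced by a well-chosen $\nu$, or a well-poised ${}_3F_2(\cdot,\cdot,\cdot;\cdot,\cdot;-1)$ evaluation, in which the $\sqrt[3]{2}$ emerges from Gauss' triplication formula applied to a $\Gamma(k/3)$ sub-product appearing in the gamma ratio.
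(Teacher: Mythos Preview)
Your proposal is not a proof: you outline a search strategy and then explicitly identify two obstacles that you do not resolve. No concrete $\mu$, $\nu$, $r$ are ever exhibited, and no hypergeometric evaluation is verified to equal the stated constant. That is the gap --- the argument stops at exactly the point where the work would have to begin.

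The search is also misdirected compared to the paper. Its proof reads, in full: ``Take $p=3$, $\mu=(3)$, $\nu=(1,1,1)$, and proceed as above.'' No part of size~$4$ enters the construction at all. You were led by the factor $\Gamma(3/4)$ in the target to build $\mu$ around a part~$4$, and then pushed toward negative $r$ and Kummer/Whipple/Dixon-type evaluations at $z=-1$; but the gamma factors appearing in a closed-form hypergeometric value need not match the denominators in $\Ups_\mu$ one-for-one, and the lesson of the preceding corollary is rather to try the simplest choice $\mu=(p)$, $\nu=(1^p)$ first. Your observation that the target $A\approx 0.465<1$ rules out $r>0$ (since the series then has constant term~$1$ and nonnegative higher terms) is a correct sanity check, but it does not by itself locate the right data, and you never follow it through to an actual evaluation.
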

\begin{proof}
Take $p=3$, $\mu=(3)$, $\nu=(1,1,1)$, and proceed as above.
\end{proof}

\bigskip

\section{Tile counting functions are binomial multisums}\label{s:enum}

\nin
In this section, we prove the following result towards the proof of Main Theorem~\ref{t:main-countR}.

\begin{lemma}\label{countR1}
$\mathcal{F}\subseteq\mathcal{B}$.
\end{lemma}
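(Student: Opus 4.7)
The plan is to express $f(n)=f_{T,\ve}(n)$ as a constrained multinomial sum over tile-usage vectors and then encode both the summand and the constraints using the paper's binomial convention.

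I first treat the clean case in which every tile $\tau_i\in T$ is a rectangle, say of width $w_i\in\rr_{>0}$. Then a tiling of $[1\times(n+\ve)]$ using $\tau_i$ exactly $k_i$ times is just an ordered sequence of tiles whose widths sum to $n+\ve$, so the number of such tilings is the multinomial $\binom{k_1+\cdots+k_r}{k_1,\ldots,k_r}$ when $\sum_i k_iw_i=n+\ve$ and zero otherwise. Hence
$$
f(n) \,=\, \sum_{\substack{(k_1,\ldots,k_r)\in\nn^r \\ \sum_i k_iw_i=n+\ve}} \binom{k_1+\cdots+k_r}{k_1,\ldots,k_r}.
$$

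Next I rationalize the width-equation. Fix a $\qqq$-basis $\{1,\al_1,\ldots,\al_\ell\}$ of the finite-dimensional $\qqq$-vector space $\qqq\langle 1,w_1,\ldots,w_r,\ve\rangle$ containing $1$. Expanding $\sum_i k_iw_i=n+\ve$ coordinate-wise in this basis produces $\ell+1$ linear equations with rational coefficients in $(k_1,\ldots,k_r,n)$, which after clearing denominators become $\zz$-linear equations $L_j(k_1,\ldots,k_r,n)=0$ for $j=0,\ldots,\ell$. I then convert everything to binomial form: expand the multinomial as the product $\prod_{i=1}^{r-1}\binom{k_i+\cdots+k_r}{k_i}$, and encode each linear constraint by the factor $\binom{L_j}{L_j}\binom{-L_j}{-L_j}$. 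Under the paper's convention, $\binom{L}{L}=1$ iff $L\ge 0$ and $\binom{-L}{-L}=1$ iff $L\le 0$, so their product is $1$ precisely when $L_j=0$ and $0$ otherwise; the non-negativity constraints $k_i\ge 0$ are automatic because $\binom{\cdot}{k_i}=0$ whenever $k_i<0$. Summing over $(k_1,\ldots,k_r)\in\zz^r$ then yields the required $\cB$-form of $f$.

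The remaining case of tall but possibly non-rectangular tiles is reduced to the above by a standard transfer-matrix preliminary: one constructs a finite directed weighted multigraph $\scg$ via a vertical-sweep / profile construction, so that tilings of $[1\times(n+\ve)]$ biject with $s\to t$ walks in $\scg$ of total edge-weight $n+\ve$, and then one plays the same game with edge multiplicities of $\scg$ in place of tile multiplicities. The main obstacle lies here: counting walks in a general directed multigraph with prescribed edge multiplicities is no longer a single multinomial, but is given by a BEST-theorem-style formula involving spanning arborescences. This can still be packaged as a binomial multisum by introducing auxiliary $\{0,1\}$-valued indicator indices (each constrained by a factor $\binom{1}{\cdot}$) to select an arborescence in the support subgraph, using that the arborescence count is a polynomial in the $k_e$'s with non-negative integer coefficients, and converting the local vertex factorials into products of binomials via the multinomial identity.
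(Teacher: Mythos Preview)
Your rectangular case is correct, and the reduction of general tall tiles to closed walks of weight $n+\varepsilon$ from $v_0$ in a finite weighted multigraph matches the paper's Lemma~\ref{lemGraph}. The divergence is in how the walks are then enumerated. The paper does \emph{not} parametrize walks by edge multiplicities; instead it orders the vertices, introduces \emph{irreducible cycles} $\rho_1,\ldots,\rho_r$ (positive cycles containing no shorter positive subcycle), and parametrizes each walk by its irreducible-cycle multiplicities $z_i=m(\rho_i,\gamma)$. Because every irreducible cycle $\rho_i$ can be inserted at any occurrence of its starting vertex in what has already been built from $\rho_1,\ldots,\rho_{i-1}$, the walk count with prescribed $(z_1,\ldots,z_r)$ is a single product of stars-and-bars binomials $\prod_i\binom{a_{i,0}+\sum_{j<i}a_{i,j}z_j+z_i-1}{z_i}$ (Lemma~\ref{l:enum-BT}); one then parametrizes the affine lattice $\sum w(\rho_i)z_i=n+\varepsilon$ and is immediately in~$\mathcal{B}$. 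No arborescence factor, no vertex factorials, no case analysis.

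Your BEST-theorem route has a genuine gap. Writing the walk count for a fixed edge-multiplicity vector $(k_e)$ as $\sum_A\binom{d_{v_0}}{\,\cdot\,}\prod_{v\ne v_0}\binom{d_v-1}{k_{e_v^A}-1,\ldots}$, with $A$ ranging over spanning arborescences of $\scg$ rooted at $v_0$, is correct only when the walk visits every vertex: if some $v$ has $d^+_v=0$ then no spanning $A$ contributes, yet there may be many walks that simply avoid~$v$. You therefore need an outer finite sum over subsets $S\ni v_0$ of visited vertices (enforcing $k_e=0$ for $e\notin\scg[S]$) and an inner sum over arborescences of $\scg[S]$. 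This yields a finite sum of binomial products, but at this point in the argument $\mathcal{B}$ is \emph{not yet} known to be closed under addition (that is Corollary~\ref{c:add-mult}, which logically depends on the lemma you are proving). Your ``auxiliary $\{0,1\}$-indices to select an arborescence'' can in principle absorb the sum over $(S,A)$ into a single $\mathcal{B}$-expression (enumerate the finitely many pairs and let each $a_{v,j}$ be an affine combination of the indicators), but you have not said how to encode the global acyclicity/connectivity of the selected edges with binomial factors, and doing so is substantially more work than your sketch suggests. The irreducible-cycle decomposition is precisely the device that sidesteps all of this.
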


\smallskip

\nin
The proof first restates the lemma in the language of counting cycles in
multi-graphs~$\scg$ (see $\S$\ref{ss:def-graphs}), and then uses graph theoretic
tools to give a binomial-multisum formula for the latter.

\medskip

\subsection{Cycles in graphs}\label{ss:enumR-cycles}
We first show how to compute tile counting functions in the
language of cycles in weighted graph.

\begin{lemma}\label{lemGraph}
For every tile counting function $f(n)$ there exists a finite weighted directed
multi-graph $\scg_T$ with vertices $v_0,\ldots,v_N$, such that $f(n)$ is the number
of paths of \tweight $n+\varepsilon$, which start and end at~$v_0$.
\end{lemma}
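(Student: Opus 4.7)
The plan is to prove Lemma~\ref{lemGraph} via a canonical-order transfer-matrix construction, extending the classical rational-tile approach to accommodate irrational side-lengths. Fix once and for all a \emph{canonical order} on the tiles of any tiling with~$T$: sort tiles by leftmost $x$-coordinate, with ties broken by lowest $y$-coordinate of the left edge. This associates each tiling of $\rR_{n+\ve}$ with a unique linear sequence of tile placements. I would then define $\scg_T$ as follows: vertices are equivalence classes, up to horizontal translation, of \emph{frontier} curves -- the piecewise axis-parallel curves of vertical extent~$1$ separating the tiled region from the untiled portion of the half-strip after a canonical-order prefix has been placed. Let $v_0$ be the class of the single vertical segment, which represents both the empty tiling and any fully completed tiling. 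For each vertex $[P]$ and each tile $\tau \in T$ whose placement at the canonically-next slot of $P$ is valid and produces frontier class $[P']$, put a directed edge $[P] \to [P']$ of weight $|\tau|$.

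The main obstacle -- and what makes this step non-trivial in the irrational setting -- is proving the vertex set is finite. The crucial observation is that canonical order forces the leftmost point of the frontier to advance monotonically: after placing canonical tiles $\tau_{i_1},\ldots,\tau_{i_j}$ whose leftmost $x$-coordinates are $t_1 \le \cdots \le t_j$, the leftmost point of the current frontier equals $t_{j+1}$, the leftmost $x$-coordinate of the next canonical tile, while every previously placed tile has leftmost coordinate~$\le t_{j+1}$ and hence rightmost coordinate at most $t_{j+1} + W$, where $W := \max_{\tau\in T}\mathrm{width}(\tau)$. Thus the horizontal spread of the frontier is uniformly bounded by~$W$. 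Combined with the fixed vertical extent~$1$, the finite supply of tile-edge types, and positive lower bounds on all edge lengths coming from~$T$, only finitely many combinatorial frontier shapes can arise; within each shape the metric data is pinned down by the requirement of being assembled from tiles of~$T$, so the state space is finite.

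Finally, I would establish the bijection between tilings of $\rR_{n+\ve}$ and closed walks at~$v_0$ of total weight $n+\ve$. In the forward direction, a tiling's canonical tile sequence traces a unique closed walk at~$v_0$ whose total weight is $\sum_j |\tau_{i_j}|$, equal to the area of the strip, namely $n+\ve$. In the reverse direction, a closed walk at~$v_0$ can be ``played back'': at each step the labeled tile~$\tau$ is placed at the (uniquely determined) canonical next slot of the current frontier, producing a well-defined partial tiling whose final state is the empty frontier, and hence a tiling of some $\rR_{L}$ with $L = n+\ve$. Distinct label sequences yield distinct tilings since each placement is deterministic given the current frontier class and the chosen tile. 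Thus $f_{T,\ts\ve}(n)$ equals the number of closed walks at~$v_0$ of weight $n+\ve$, completing the proof.
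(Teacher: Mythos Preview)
Your construction is correct and ultimately produces the same graph as the paper, but you have worked much harder than necessary. The paper exploits the single hypothesis you never invoke: every tile in~$T$ has height~$1$. This means a tiling of $\rR_{n+\ve}$ is literally a left-to-right sequence $(\tau_0,\ldots,\tau_\ell)$ of tiles with matching boundaries $\partial_R(\tau_i)=\partial_L(\tau_{i+1})$, vertical first and last boundaries, and total area $n+\ve$. The vertices of $\scg_T$ are then simply the distinct boundary curves $\partial_L(\tau),\partial_R(\tau)$ over $\tau\in T$ (up to translation), with $v_0$ the vertical segment; edges are the tiles themselves, weighted by area. Finiteness is immediate because $T$ is finite.

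Your canonical-order and frontier machinery is the right tool for strip tilings with tiles of \emph{smaller} height, where tiles genuinely stack and the interface is a staircase curve. In that generality, however, your finiteness argument would be the delicate step: the claim that ``within each shape the metric data is pinned down'' can fail with irrational widths, since the horizontal offset between two rows can take infinitely many values of the form $c+d\alpha$ in a bounded interval. Here the tall-tile assumption collapses your frontier to a single tile boundary, so the issue disappears and your argument goes through; but you should recognize that the height-$1$ hypothesis is doing all the work, and state the proof accordingly.
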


\begin{proof}
Fix an $f(n)=\CT(\Rven)$. Recall that each tile $\tau\in T$ has height~1.
Denote by $\partial_L(\tau)$ and $\partial_R(\tau)$ the left boundary and
right boundary curves of~$\tau$ of height~1, respectively.
A sequence of tiles $(\tau_0,\ldots,\tau_\ell)$ is a tiling
of~$\Rven$ if and only if
\begin{enumerate}
  \item $\partial_R(\tau_i)=\partial_L(\tau_{i+1})$ for $0\le i \le \ell-1$\ts,
  \item $\partial_L(\tau_0)$ is a vertical line\ts,
  \item $\partial_R(\tau_\ell)$ is a vertical line\ts,
  \item $|\tau_0|+\ldots+|\tau_\ell|=n+\ve$\ts.
\end{enumerate}
Here the first condition implies that all the tiles fit together with no gaps,
the second and third conditions imply that the union of the tiles is actually
a rectangle, and the fourth condition implies that the rectangle
has length~$n+\ve$.

We now construct a weighted directed multi-graph $\scg_T$ corresponding
to~$T$ as follows. The vertices of $\scg_T$ are exactly the set of left
or right boundaries of tiles (up to translation). Denote them
$v_0,\ldots,v_N$, where $v_0$ is the vertical line.
Let the edge $e_{ij}=(v_i,v_j)$ in $\scg_T$ correspond to tile~$\tau\in T$,
such that $\partial_L(\tau)=v_i$,   $\partial_R(\tau)=v_j$, and
let $\weight(e_{ij})=|\tau|$.  Note that edges $e_{ij}$ and~$e_{ij}'$,
corresponding to tiles $\tau$ and~$\tau'$, can have different weight.
By construction, the paths in $\scg_T$  of \tweight $n+\varepsilon$,
which start and end at~$v_0$, are in bijection with tilings of~$\Rven$\ts.
\end{proof}

\subsection{Irreducible cycles}
To count the number of cycles in a directed
multi-graph starting at~$v_0$ of \tweight $n+\varepsilon$,
we factor the cycles into irreducible cycles.

\smallskip

Let $\scg=(V,E)$ be a finite directed multi-graph, and let
$V=\{v_0,\ldots,v_N\}$.
A cycle~$\ga$ in $\scg$ is called \textit{positive} if it starts and
ends at~$v_i$, and only passes through vertices~$v_j$ with $j\geq i$.
Cycle~$\ga$ is called \textit{irreducible} if it is a positive and
contains no positive shorter cycle~$\ga'$; we refer to~$\ga'$ as
\emph{subcycle} of~$\ga$.

\begin{lemma}\label{l:fin}
There are finitely many irreducible cycles in~$\scg$.
\end{lemma}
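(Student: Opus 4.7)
The plan is to prove a combinatorial bound on the length of every irreducible cycle in $\scg$; since $\scg$ has finitely many walks of any given length, this is enough for the lemma.

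First I would reduce to the case of irreducible cycles starting at $v_0$. An irreducible cycle at $v_i$ lives in the induced subgraph on $\{v_i,\ldots,v_N\}$ and is irreducible there as well, since positivity and irreducibility depend only on the sequence of visited vertices. By relabelling, it therefore suffices to bound the number of irreducible cycles at the least-indexed vertex of a finite multigraph on $N+1$ vertices.

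Let $\ga=(u_0,\ldots,u_L)$ be irreducible at $v_0$. The first observation is that $v_0$ appears only at the two endpoints: an interior occurrence $u_p=v_0$ with $0<p<L$ would make $(u_0,\ldots,u_p)$ a strictly shorter positive subcycle at $v_0$ (positive because $\ga$ visits only indices $\geq 0$), contradicting irreducibility. Let $s_k$ denote the number of interior positions at which $v_k$ appears, so $s_0=0$. I would then establish the key recursion
$$
s_k \,-\, 1 \, \leq \, \sum_{j<k} s_j \qquad \text{for all } k\ge 1.
$$
Indeed, if $v_k$ occurs at interior positions $p_1<\cdots<p_{s_k}$, each subcycle $(u_{p_j},\ldots,u_{p_{j+1}})$ is strictly shorter than $\ga$ and therefore, by irreducibility, cannot be positive; hence there exists $p_j<r_j<p_{j+1}$ with $u_{r_j}$ of index strictly below $k$. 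The $r_j$ lie in pairwise disjoint open intervals $(p_j,p_{j+1})$, so are distinct, and each is an interior position of index $<k$, thus counted in $\sum_{j<k} s_j$.

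An easy induction on $k$ yields $s_k \leq 2^{k-1}$ for $k \geq 1$, so the total length satisfies $L = 1 + \sum_{k=0}^{N} s_k \leq 2^N$. Since the number of walks of length at most $2^N$ starting at $v_0$ in the finite multigraph $\scg$ is at most $|E|^{2^N}$, there are only finitely many irreducible cycles at $v_0$; summing over the $N+1$ choices of starting vertex completes the proof. The one point requiring a little care is verifying that each "dip" position $r_j$ genuinely lies in the interior of $\ga$, so that it is counted in some $s_j$ with $j<k$; but this is automatic since $p_j$ and $p_{j+1}$ are themselves interior and $r_j$ lies strictly between them.
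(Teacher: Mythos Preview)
Your proof is correct and takes a genuinely different route from the paper's. The paper argues by induction on the number of vertices: if an irreducible cycle misses some vertex, delete that vertex and apply induction; if it visits every vertex, then it starts at $v_0$, visits $v_1$ exactly once, and after identifying $v_0$ with $v_1$ it splits into two irreducible cycles in a strictly smaller graph. Your argument instead produces an explicit length bound. The key recursion $s_k-1\le\sum_{j<k}s_j$ is a clean observation: between any two consecutive interior visits to $v_k$ the cycle must dip below index $k$ (else that segment would be a shorter positive subcycle), and these dips are pairwise distinct interior positions of lower index. This yields $s_k\le 2^{k-1}$ and hence length at most $2^N$. Your approach is more elementary and quantitative, giving an explicit (albeit exponential) bound on the length of an irreducible cycle; the paper's vertex-identification argument is shorter but non-effective as stated. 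A tiny cosmetic point: the bound ``at most $|E|^{2^N}$'' should really be a sum over lengths $\le 2^N$, but of course this does not affect finiteness.
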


\begin{proof} 
We proceed by induction on the number $N+1$ of vertices in~$\scg$.
The claim is trivial for $N=0$.
Suppose $N\ge 1$ and let $\gamma$ be an irreducible cycle in $\scg$.
If $\gamma$ does not contain every vertex in $\scg$, we can delete
an unvisited vertex $v_i$ and apply inductive assumption to
a smaller graph $\scg'=\scg-v_i$.  Thus we can assume that $\ga$
contains all vertices.

Since $\ga$ is positive and contains all vertices,
it must start at~$v_0$.  Since $\ga$ is irreducible,
it never come back to $v_0$ until the end.
Note that $\ga$ passes through $v_1$ exactly one, since
otherwise it is not irreducible.  Identify vertices
$v_0$ and~$v_1$, and denote by~$H$ the resulting
smaller graph.  The cycle $\ga$ is then mapped into a
concatenation of two irreducible cycles in~$H$. Applying
inductive assumption to~$H$ gives the result.
\end{proof}

\subsection{Multiplicities of irreducible cycles}
Let $\rho$ be an irreducible subcycle of a positive
cycle~$\gamma$.  Define $\gamma-\rho$ to be the positive cycle
given by traversing $\gamma$, but skipping over $\rho$.
The \emph{multiplicity} of $\rho$ in $\gamma$,
denoted $\mult(\rho,\gamma)$, is defined to be:
\[
 \rho(\gamma)=
  \begin{cases}
   1 & \text{if } \gamma=\rho,\\
   0 & \text{if } \gamma \text{ is irreducible and not equal to } \rho,\\
   \mult(\rho,\gamma^\prime)+\mult(\rho,\gamma-\gamma^\prime)       &
   \text{if } \gamma^\prime \text{ is an irreducible positive subcycle of } \gamma.
  \end{cases}
\]

\smallskip

\begin{lemma}\label{l:welld}
The multiplicity $\mult(\rho,\gamma)$ is well defined.
\end{lemma}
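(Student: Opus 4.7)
The plan is strong induction on the length $\ell(\gamma)$ (number of edges) of the positive cycle $\gamma$. If $\gamma$ is irreducible, the first two clauses of the definition apply and yield a unique value, so termination and well-definedness are immediate in that case. The content is entirely in the reducible case, where I must show that the expression $\mult(\rho,\gamma') + \mult(\rho,\gamma-\gamma')$ is independent of which irreducible positive subcycle $\gamma'$ is chosen. Termination of the overall recursion is automatic, since both $\gamma'$ and $\gamma - \gamma'$ are strictly shorter than~$\gamma$.

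The key combinatorial lemma, which I would prove first, is that any two distinct irreducible positive subcycles $\gamma_1', \gamma_2'$ of~$\gamma$ are \emph{edge-disjoint}. View each $\gamma_i'$ as a contiguous sub-interval $[a_i+1,b_i]$ of the edge sequence of $\gamma$, write $u_s$ for the vertex at position $s$, and let $j_i$ denote the index of the common endpoint $u_{a_i}=u_{b_i}$ of $\gamma_i'$. Assume $\gamma_1'\ne\gamma_2'$ share an edge; by symmetry take $a_1 \le a_2$. If $b_2 \le b_1$, then $\gamma_2'$ is a proper positive subcycle of $\gamma_1'$ (its vertices lie on $\gamma_1'$ and all have index~$\ge j_2$), contradicting irreducibility of~$\gamma_1'$. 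Otherwise $a_1<a_2\le b_1<b_2$: positivity of $\gamma_1'$ applied at $u_{a_2}$ forces $j_2\ge j_1$, while positivity of $\gamma_2'$ applied at $u_{b_1}=u_{a_1}$ forces $j_1\ge j_2$, whence $j_1=j_2$ and $u_{a_1}=u_{a_2}$, so the subsegment $[a_1+1,a_2]$ is a proper positive subcycle of $\gamma_1'$ — again a contradiction.

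With edge-disjointness in hand, the confluence step is routine. For two distinct choices $\gamma_1',\gamma_2'$, disjointness means $\gamma_2'$ appears unchanged as a contiguous irreducible positive subcycle of the strictly shorter cycle $\gamma - \gamma_1'$, and the two deletions commute: $(\gamma-\gamma_1')-\gamma_2' = (\gamma-\gamma_2')-\gamma_1'$. The induction hypothesis applied to $\gamma - \gamma_1'$ yields
$$
\mult(\rho,\gamma-\gamma_1') \,=\, \mult(\rho,\gamma_2') \,+\, \mult(\rho,(\gamma-\gamma_1')-\gamma_2'),
$$
and a symmetric identity holds for $\gamma - \gamma_2'$; adding $\mult(\rho,\gamma_i')$ to each and using commutation of the deletions shows that both choices of $\gamma'$ give the same value for $\mult(\rho,\gamma)$.

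The main obstacle is the edge-disjointness lemma — it is the only place where the geometric structure of positive cycles (vertex indices and the requirement $j \ge i$) is genuinely used. Once it is settled, the rest is a standard confluence-style argument, and no further subtlety is expected in the termination, case analysis, or in the interpretation of "positive subcycle as an interval of edges."
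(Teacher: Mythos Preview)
Your proposal is correct and follows essentially the same strategy as the paper: the heart of both arguments is the claim that two distinct irreducible positive subcycles of~$\gamma$ are edge-disjoint, after which well-definedness follows by a short induction/minimal-counterexample argument. Your interval case analysis for edge-disjointness is in fact more detailed than the paper's one-line sketch (``if they have non-empty overlap, one of them must contain the end of another which contradicts positivity''), and your confluence formulation is the standard inductive counterpart to the paper's minimal-counterexample phrasing.
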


In other words, the multiplicity $\mult(\rho,\gamma)$
represents the number of times $\rho$ appears in the
decomposition of~$\gamma$.  This allows us to count cycles
in~$\scg_T$, which start and end at~$v_0$, that decompose into
a given list of irreducible cycles.

\begin{proof}[Proof of Lemma~\ref{l:welld}] 
By contradiction, assume $\ga$ is the smallest positive cycle
with irreducible decompositions $\rho_1,\ldots,\rho_k$ and
$\rho'_1,\ldots,\rho'_\ell$, giving different multiplicities.

We claim that $\rho_1'$ must appear on the first list
as $\rho_i$ and does not intersect (edge-wise) any of the
previous cycles $\rho_j$, $j<i$.  Indeed, neither $\rho_j$
can contain $\rho_1'$ or vice versa since both are irreducible.
However, if they have non-empty overlap, one of them must
contain the end of another which contradicts positivity.
Since the edges of $\rho_1'$ have to be eventually removed,
we have the claim.

By construction, we now have a new positive cycle $\ga'=\ga-\rho_1'$
with irreducible decompositions $\rho_1,\ldots,\rho_{i-1},\rho_{i+1},\ldots,\rho_k$
and $\rho'_2,\ldots,\rho'_\ell$, giving different multiplicities.
This contradicts the assumption that~$\ga$ is minimal.
\end{proof}

\subsection{Counting cycles}
Let $T$ be a tall set of tiles and $f(n)=\CT(\Rven)$.
Consider graph~$\scg_T$ constructed in Lemma~\ref{lemGraph}, and
let $\rho_1,\ldots,\rho_r$ be the list of irreducible cycles in~$\scg_T$,
ordered lexicographically.
Denote by $\BT(z_1,\ldots,z_r)$ the number of cycles~$\ga$ in $\scg_T$,
which start at~$v_0$ and have multiplicity $\mult(\rho_i,\ga)=z_i$.

For each $0<j<i$, let $a_{i,j}$ be the number of times the
first vertex in $\rho_i$ is visited in $\rho_j$, where
the first and last vertex in $\rho_j$ is considered to be visited exactly once.
Let $a_{i,0}=1$ if the first vertex of $\rho_i$ is $v_0$
and let $a_{i,0}=0$ otherwise.

\begin{lemma}\label{l:enum-BT}
We have
$$
\BT(z_1,\ldots,z_r)\,=\,
\prod_{i=1}^{r}{a_{i,0}+a_{i,1}z_1+\ldots+a_{i,i-1}z_{i-1}+z_i-1\choose z_i}.
$$
\end{lemma}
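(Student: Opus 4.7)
The plan is to set up a bijection between cycles $\gamma$ counted by $\BT(z_1,\ldots,z_r)$ and ``insertion sequences'' $(S_1,\ldots,S_r)$, where each $S_i$ is a multiset of size $z_i$ drawn from a set of $N_i$ ``slots''. Once the bijection is in place, the product formula is immediate by stars-and-bars, since the number of multisets of size $z_i$ in $N_i$ slots is $\binom{N_i+z_i-1}{z_i}$, and we will verify that $N_i=a_{i,0}+\sum_{j<i}a_{i,j}z_j$.

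Let $u_i$ denote the first vertex of $\rho_i$. The forward map is constructive: starting from the trivial cycle $\gamma_0=(v_0)$, I would inductively build $\gamma_i$ by splicing $z_i$ copies of $\rho_i$ into $\gamma_{i-1}$ at the $u_i$-visits specified by $S_i$, and set $\gamma:=\gamma_r$. The key calculation is that the total number of $u_i$-visits in $\gamma_{i-1}$ is exactly $N_i$: the term $a_{i,0}$ accounts for the initial $v_0$-visit in $\gamma_0$, and each previously inserted copy of $\rho_j$ ($j<i$) adds exactly $a_{i,j}$ new $u_i$-visits, as can be seen by inspecting the splice operation, which replaces one $u_j$-vertex by the sequence $u_j\,\alpha_j\,u_j$ where $\alpha_j$ is the interior of $\rho_j$ (with the two endpoints of $\rho_j$ collapsed exactly as in the definition of $a_{i,j}$).

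To verify that the forward map lands in the set of cycles with multiplicity vector $(z_1,\ldots,z_r)$, I would show by induction on $k$ that $\mult(\rho_i,\gamma_k)=z_i$ for $i\le k$ and zero for $i>k$. For the inductive step, each inserted copy of $\rho_k$ in $\gamma_k$ is an irreducible positive subcycle, so peeling all $z_k$ copies via the recursion reduces $\gamma_k$ to $\gamma_{k-1}$ and contributes $z_k$ to $\mult(\rho_k,\gamma_k)$ and $0$ to $\mult(\rho_i,\gamma_k)$ for $i\neq k$, since $\mult(\rho_i,\rho_k)=\delta_{ik}$. By Lemma~\ref{l:welld} this multiplicity computation is unambiguous.

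The main obstacle is injectivity, which I would establish by constructing an inverse that processes irreducibles in reverse lex order. The crucial observation is that in $\gamma=\gamma_r$, the $z_r$ inserted copies of $\rho_r$ are the \emph{only} contiguous $\rho_r$-subwords: any two such subwords are disjoint (overlap would endow $\rho_r$ with a period shorter than its length, producing a proper positive subcycle and contradicting irreducibility), and the multiplicity computation $\mult(\rho_r,\gamma)=z_r$ together with the peeling recursion forbids ``spurious'' extra copies beyond the $z_r$ inserted ones. Hence one can uniquely recover $S_r$ from $\gamma$ by locating these $z_r$ subwords, reading off their positions in $\gamma_{r-1}$, and removing them; iterating through $i=r-1,\ldots,1$ reconstructs the full insertion sequence. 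Combining this inverse with the forward map yields the bijection, and the product of the binomial factors gives the claimed formula.
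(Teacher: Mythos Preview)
Your approach is the same as the paper's in spirit: both build cycles by inserting the irreducible cycles $\rho_1,\ldots,\rho_r$ in lex order and count the $u_i$-slots in $\gamma_{i-1}$ via stars-and-bars. Your computation of $N_i$ and the verification that $\mult(\rho_i,\gamma_k)=z_i$ for $i\le k$ (using Lemma~\ref{l:welld}) are correct, and the periodicity argument for disjointness of $\rho_r$-subwords is a clean way to see that $u_r$ cannot occur in the interior of~$\rho_r$.

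However, you have only established injectivity of the forward map, not surjectivity, and your phrase ``the main obstacle is injectivity'' suggests you have not noticed this. Your inverse is verified only on cycles $\gamma=\gamma_r$ in the \emph{image} of $F$: there ``the $z_r$ inserted copies'' makes sense, and a spurious $\rho_r$-subword would (by disjointness) survive into $\gamma_{r-1}$, contradicting $\mult(\rho_r,\gamma_{r-1})=0$. For an \emph{arbitrary} $\gamma$ with multiplicity vector $(z_1,\ldots,z_r)$ you must show directly that $\gamma$ has exactly $z_r$ contiguous $\rho_r$-subwords; your argument gives at most $z_r$ (peel them all off, multiplicities are nonnegative), but for equality you need that the remainder $\gamma'$, which has no $\rho_r$-subword, satisfies $\mult(\rho_r,\gamma')=0$. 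This is precisely where the lex ordering enters, and it is the key point in the paper's proof: since $\rho_r$ is lex-maximal, every $u_j$ with $j<r$ satisfies $u_j\le u_r$, and since $u_r$ lies only at the endpoints of $\rho_r$, no peeling of a $\rho_j$ from $\gamma'$ can create a new $\rho_r$-subword straddling the glue point. Hence $\rho_r$ never appears in any decomposition of $\gamma'$, so $\mult(\rho_r,\gamma')=0$. With this observation your inverse map is well-defined on all of $\{\gamma:\mult(\rho_i,\gamma)=z_i\}$ and the bijection follows.
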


\begin{proof}
Given a cycle $\gamma$ in $\scg_T$ starting at~$v_0$ with
$\mult(\rho_j,\gamma)=z_j$ for all~$j$,
we can remove irreducible subcycles of $\gamma$ one at a time,
until we are left with the empty cycle at~$v_0$.
Reversing the process, we can also speak of ``adding''
irreducible cycles to build~$\ga$.

Let~$i$, $0\le i\le r$, be maximal index, such that $z_i>0$.
By definition, every vertex in $\rho_i$ has index greater than every vertex
at the start of a irreducible cycle with positive multiplicity in~$\gamma$.
Thus, irrespectively of order in which we add the irreducible cycles,
no cycles are inserted in the middle of a copy of $\rho_i$.  Therefore,
we may assume that the copies of $\rho_i$ are added last.
Further, one can take the cycle $\gamma$ and determine the
cycle with all of the copies of $\rho_i$ removed, and the locations
where the $\rho_i(\gamma)$ copies of $\rho_i$ were inserted.

Let $\gamma^\prime$ be the cycle $\gamma$ with all copies of $\rho_i$ removed,
and let $v_k$ be the start of~$\rho_i$. Note that the number
of $v_k$ in $\gamma^\prime$ is exactly
$$
a_{i,0}+a_{i,1}z_1+\ldots+a_{i,i-1}z_{i-1}\ts,
$$
since adding the cycle $\rho_j$ adds $a_{i,j}$ more vertices~$v_k$.
Therefore, the number of ways to add $z_i$ copies of $\rho_i$
to $\gamma^\prime$ is equal to
\begin{equation}
{a_{i,0}+a_{i,1}z_1+\ldots+a_{i,i-1}z_{i-1}+z_i-1\choose z_i}\ts.
\tag{$\star$}
\end{equation}
We conclude that $\BT(z_1,\ldots,z_r)$ is ($\star$) times the number
of possible strings you can get after removing all $z_i$ copies of~$\rho_i$.
This gives the recursive formula:
$$
\BT(z_1,\ldots,z_i,0,\ldots,0)\.=\. {a_{i,0}+a_{i,1}z_1+\ldots+a_{i,i-1}z_{i-1}+z_i-
1\choose z_i}\BT(z_1,\ldots,z_{i-1},0,\ldots,0)\ts.
$$
Since $B_T(0,\ldots,0)=1$, iterating the above formula gives the result.
\end{proof}

\smallskip

We can now count all cycles~$\ga$ which start at~$v_0$ by summing over all
lists of irreducible cycles as above giving decompositions of~$\ga$.

\begin{lemma}\label{countR0}
Every tile counting function $f\in \cF$ can be written as
$$
f(n)\, =\, \sum \. \prod_{i=1}^{r}\.
{a_{i,0}+a_{i,1}z_1+\ldots+a_{i,i-1}z_{i-1}+z_i-1\choose z_i}\.,
$$
where the sum is over all $(z_1,\ldots,z_r)\in\mathbb{Z}^r$ satisfying
$c_1 z_1+\ldots+c_r z_r=n+\varepsilon$, where all $c_i\in\mathbb{R}$
and $a_{i,j}\in\mathbb{N}$.
\end{lemma}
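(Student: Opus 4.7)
The plan is to combine the three preceding lemmas in the obvious way, using Lemma~\ref{l:welld} to index tilings by their irreducible decomposition data.

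First, I would apply Lemma~\ref{lemGraph} to replace the problem of counting tilings of $\Rven$ by the problem of counting cycles $\ga$ in $\scg_T$ which start and end at~$v_0$ and have \tweight exactly $n+\ve$. Next, since $v_0$ has the smallest index among all vertices, any such cycle is positive in the sense of~$\S$5.2, so by Lemma~\ref{l:welld} it has a well-defined multiplicity vector
$\bigl(\mult(\rho_1,\ga),\ldots,\mult(\rho_r,\ga)\bigr)\in \nn^r$ recording how many times each irreducible cycle $\rho_i$ appears in its decomposition. Setting $c_i := w(\rho_i)\in \rr$, the additivity of weight under decomposition gives
$$w(\ga)\,=\,c_1\mult(\rho_1,\ga)+\ldots+c_r\mult(\rho_r,\ga),$$
so $\ga$ has \tweight $n+\ve$ if and only if its multiplicity vector $(z_1,\ldots,z_r)$ satisfies the linear constraint $c_1 z_1+\ldots+c_r z_r=n+\ve$.

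Once this is set up, I would partition the set of cycles by their multiplicity vector. By the definition of $\BT$, the number of cycles at~$v_0$ with multiplicity vector exactly $(z_1,\ldots,z_r)$ is $\BT(z_1,\ldots,z_r)$. Therefore
$$
f(n)\,=\,\sum_{\substack{(z_1,\ldots,z_r)\in\nn^r\\ c_1z_1+\ldots+c_r z_r=n+\ve}} \BT(z_1,\ldots,z_r),
$$
and substituting the closed form from Lemma~\ref{l:enum-BT} yields the formula claimed. Finally I would observe that extending the summation from $\nn^r$ to $\zz^r$ is harmless: whenever some $z_i<0$, the binomial factor $\binom{a_{i,0}+\ldots+z_i-1}{z_i}$ equals zero by the convention of~$\S$\ref{ss:def-binom}, so no spurious terms are introduced. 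The constants $a_{i,j}\in \nn$ arise naturally as counts of vertex visits, exactly as defined before Lemma~\ref{l:enum-BT}.

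The only step with any subtlety is justifying that grouping cycles by multiplicity vector is both exhaustive and disjoint, i.e.\ that each cycle $\ga$ at $v_0$ contributes to exactly one vector. Exhaustiveness is clear since every positive cycle admits at least one irreducible decomposition (peel off any irreducible positive subcycle and iterate), and disjointness is precisely the content of Lemma~\ref{l:welld}. Beyond this, the proof is just assembly of the pieces already in place.
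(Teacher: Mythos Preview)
Your proposal is correct and follows essentially the same approach as the paper: apply Lemma~\ref{lemGraph}, group cycles by their multiplicity vectors so that the weight constraint becomes $\sum c_iz_i=n+\ve$ with $c_i=w(\rho_i)$, sum $\BT(z_1,\ldots,z_r)$ over the admissible vectors, and plug in Lemma~\ref{l:enum-BT}. You spell out a few points the paper leaves implicit (the harmlessness of extending the sum to $\zz^r$ and the exhaustive/disjoint grouping via Lemma~\ref{l:welld}), but the argument is the same.
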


\begin{proof}
By Lemma~\ref{lemGraph}, the function $f(n)$ counts the number of cycles~$\ga$ in $\scg_T$
which start at~$v_0$ of weight $n+\varepsilon$. In notation above, we have for such~$\ga$~:
$$
w(\ga) \, = \, w(\rho_1)\mult(\rho_1,\gamma)+\ldots+w(\rho_r)\mult(\rho_r,\gamma)
\, = \, n+\varepsilon\ts.,
$$
Therefore,
$$
f(n) \, = \, \sum \. \BT(z_1,\ldots,z_r)\.,
$$
where the summation is over all $(z_1,\ldots,z_r)$ such that
$w(\rho_1)z_1+\ldots+w(\rho_r)z_r \, = \, n+\varepsilon$.
Now Lemma~\ref{l:enum-BT} implies the result.
\end{proof}

\subsection{Proof of Lemma~\ref{countR1}}
In notation above, denote by $Z_n$ the set of of all vectors
$\bz = (z_1,\ldots,z_r)\in\mathbb{Z}^r$ satisfying
$c_1 z_1+\ldots+c_r z_r=n+\varepsilon$, where all $c_i\in\mathbb{R}$
and $a_{i,j}\in\mathbb{N}$.
By Lemma \ref{countR0}, every $f\in \cF$ can be written as
$$
f(n)\, =\, \sum_{\bz \in \Z_n} \. \prod_{i=1}^{r}\.
{a_{i,0}+a_{i,1}z_1+\ldots+a_{i,i-1}z_{i-1}+z_i-1\choose z_i}\..
$$

Without loss of generality, we may assume that $c_r=\varepsilon$ and $c_{r-1}=1$, since if this were not the case, we could add two tiles to $T$ of area $\varepsilon$ and $1$, each with a new boundary that only fits together with itself. This adds two disjoint loops to $\scg$, and we can label the vertices so that these two disjoint loops are the last two irreducible cycles. Note that for any $n$, the set $Z_n$ is nonempty. In particular, it contains the vector $(0,0,\ldots, n,1)$.

Consider the set $W\ssu \zz^r$ of all integer vectors $(w_1,\ldots w_r)$ with $c_1 w_1+\ldots+c_r w_r=0$. This set forms a lattice, and therefore has a basis, ${\bf b}_1,\ldots, {\bf b}_d$. Note that the set $Z_{n}$ is exactly the set of all vectors $\bz = v_1{\bf b}_1+v_2{\bf b}_2+\ldots+v_d{\bf b}_d+(0,\ldots,n,1)$, with each $v_i\in\zz$, and each vector is expressible uniquely in this way. Thus, each coordinate $z_i=\be_i(v_1,\ldots,v_d,n)$ is an integer coefficient affine function of $(v_1,\ldots,v_d,n)$.  This implies
$$a_{i,0}+a_{i,1}z_1+\ldots+a_{i,i-1}z_{i-1}+z_i-1 \, = \, \al_i(v_1,\ldots,v_d,n)\.,
$$
where $\al_i$ is also integer coefficient affine functions of  $(v_1,\ldots,v_d,n)$.
Therefore,
$$f(n) = \sum_{v\in\mathbb{Z}^{d}}\,\ts\prod_{i=1}^{r}\. {\alpha_{i}(v,n)\choose\beta_{i}(v,n)}, $$
where $\alpha_{i},\beta_{i}$ and $r, d\in\pp$ are as desired. \ $\sq$

\bigskip

\section{Diagonals of $\mathbb{N}$-rational functions are tile counting functions}\label{s:Diag}

\nin
In this section, we make the next step towards
the proof of Main Theorem~\ref{t:main-countR}.

\begin{lemma}\label{D0}
$\cN\subseteq\mathcal{F}$.
\end{lemma}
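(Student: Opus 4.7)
The plan is to realize each $F \in \CR_k$ via a weighted automaton/labeled directed multi-graph, and then to convert walks in this graph into tilings using irrational tile lengths that enforce the diagonal constraint through $\qqq$-linear independence.

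For the first step I would invoke the classical Kleene/Sch\"{u}tzenberger-type theorem for $\nn$-rational series: every $F \in \CR_k$ admits an $\nn$-representation $F = \mu\bigl(I - \sum_{i=1}^k x_i M_i\bigr)^{-1}\nu$ with $\mu \in \nn^{1\times N}$, $\nu \in \nn^{N\times 1}$, $M_i \in \nn^{N\times N}$. Equivalently, there is a finite labeled directed multi-graph $H_F$ on vertex set $\{1,\ldots,N\}$ with $M_i^{ab}$ parallel edges of type~$i$ from~$a$ to~$b$, initial weights~$\mu_a$, and final weights~$\nu_b$, such that $[x_1^{n_1}\cdots x_k^{n_k}]F$ is the $\mu$-$\nu$-weighted count of walks using $n_i$ edges of type~$i$. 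This is proved by structural induction on $\CR_k$, with $\epsilon$-elimination taking care of the constant transitions produced by concatenation and star. To pass to a single source/sink presentation I adjoin two new vertices $s, t$: from $s$ install $\mu_a$ parallel $y$-edges to each vertex~$a$ (for a fresh type $y$), and into $t$ install $\nu_b$ parallel type-$y$ edges from each vertex~$b$. Since $s$ has no incoming and $t$ no outgoing edges in the resulting graph $H_F'$, every walk from $s$ to $t$ uses exactly one opening $y$-edge at the start and one closing $y$-edge at the end, and the number of such walks with $n$ edges of each type $i \le k$ equals $f(n)$.

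For the second step I would pick reals $\alpha_1,\ldots,\alpha_{k-1},\delta$ such that $\{1,\alpha_1,\ldots,\alpha_{k-1},\delta\}$ is $\qqq$-linearly independent, set $\alpha_k := -\sum_{i<k}\alpha_i$ so that $\sum_i\alpha_i = 0$, and choose them positive and small enough that $\tfrac{1}{k}+\alpha_i > 0$ and $\delta > 0$. Assign to every vertex $v$ of $H_F'$ a distinct axis-parallel step-curve boundary shape $\partial_v$ of small horizontal extent, taking $\partial_s = \partial_t$ to be the plain vertical segment, and build one tile per edge: a type-$i$ edge $u \to v$ gives a unit-height tile of length $\tfrac{1}{k}+\alpha_i$ with left/right boundaries $\partial_u,\partial_v$, and a $y$-edge gives a tile of length $\delta$ similarly shaped. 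If parallel edges of $H_F'$ produce geometrically coincident tiles I would first subdivide each such edge through a fresh private intermediate vertex and split its length in two; this makes every tile unique while preserving total length accounting. Finally set $\ve := 2\delta$.

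To verify that the resulting tile set $T$ realizes $f$, observe that boundary matching forces any tiling of $[1\times(n+\ve)]$ to correspond to a walk from $s$ to $t$ in $H_F'$. Writing $n_i$ for the number of original type-$i$ edges used and $n_y$ for the number of original $y$-edges used, the total-length equation reads
$$
\tfrac{1}{k}\sum_{i=1}^k n_i \. + \. \sum_{i=1}^k n_i \. \alpha_i \. + \. n_y \. \delta \, = \, n + 2\delta,
$$
which, after substituting $\alpha_k = -\sum_{i<k}\alpha_i$, becomes
$$
\tfrac{1}{k}\sum_{i=1}^k n_i \. + \. \sum_{i<k}(n_i-n_k)\alpha_i \. + \. n_y \. \delta \, = \, n + 2\delta.
$$
The $\qqq$-linear independence of $\{1,\alpha_1,\ldots,\alpha_{k-1},\delta\}$ then forces $n_y = 2$ and $n_1 = \ldots = n_k = n$; the graph structure pins down the two $y$-tiles as one opening at the left end and one closing at the right end; and the middle portion is a walk in the original graph with exactly $n$ edges of each type $i$, of which there are precisely $f(n)$ by the first step. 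The main obstacle will be the graph-theoretic bookkeeping behind the $\nn$-representation in the first step: producing an honest representation with nonnegative integer entries is a strictly stronger property than being a positive $\qqq$-rational series, and the inductive constructions for sum, product, and star each introduce constant ($\epsilon$) transitions that must be eliminated without spoiling nonnegativity. The remaining technicalities — management of multi-edges by subdivision and realization of the step-curve shapes as axis-parallel simply connected polygons of the prescribed small lengths — are straightforward once the boundary extents are chosen below $\min_i\{\tfrac{1}{k}+\alpha_i,\delta\}$.
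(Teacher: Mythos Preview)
Your proposal is correct and follows essentially the same approach as the paper: represent $F\in\CR_k$ by a finite edge-colored graph (the paper proves this by an explicit structural induction in Lemma~\ref{D6} rather than invoking Kleene--Sch\"utzenberger as a black box), then assign irrational edge-lengths whose unique $\qqq$-linear dependence forces all color-counts to be equal, attach two ``bookend'' edges carrying the $\varepsilon$-part, and read off tilings as paths. The only cosmetic differences are your normalization $\sum_i\alpha_i=0$ versus the paper's $\sum_i\alpha_i=1$, and that your $s$--$t$ construction handles the constant term $f(0)$ automatically whereas the paper patches it in by hand at the end.
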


\smallskip

\nin
In other words, we prove that every diagonal~$f$
of an $\mathbb{N}$-rational generating function,
is also a tile counting function.

\subsection{Paths in networks} \label{Dproofs}

Let $\G=(V,E)$ be a directed weighted multi-graph with a unique
\emph{source}~$v_1$ and \emph{sink}~$v_2$. Further, assume that the
edges of~$\G$ are colored with $k$ colors.  We call such graph a
\emph{$k$-network}.  We need the following technical lemma.

\begin{lemma}\label{D6}
Let $G(x_1,\ldots,x_k)\in \mathcal{R}_k$.
Then there exists a $k$-network~$\G$, such that for all
$(n_1,\ldots,n_k)\in \nn^k$, $n_1+\ldots+n_k\ge 1$,
the number of paths from $v_1$ to $v_2$ with exactly
$n_i$ edges of color~$i$ is equal to
$\bigl[x_1^{n_1}\ldots x_k^{n_k}\bigr]\,G$.
\end{lemma}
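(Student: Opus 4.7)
The plan is a structural induction on the derivation of $G \in \mathcal{R}_k$ under axioms~(1)--(3) of~$\S$\ref{ss:three-binom}. Throughout I maintain the invariant that the constructed network $\G_G$ is \emph{clean} in the sense that $v_1 \neq v_2$, the source $v_1$ has no incoming edges, and the sink $v_2$ has no outgoing edges; this is what will make the operations compose correctly.

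For the atoms, take the two-vertex network with no edges for $G = 0$, and the two-vertex network with a single color-$i$ edge $v_1 \to v_2$ for $G = x_i$. For the sum $G = F_1 + F_2$, take the disjoint union of clean networks $\G_{F_1}$ and $\G_{F_2}$ and identify their sources (resp.\ sinks) into a single new source (resp.\ sink); by cleanness, no path from $v_1$ to $v_2$ can cross between the two pieces, so the path counts add. For the product $G = F_1 \cdot F_2$, identify the sink of $\G_{F_1}$ with the source of $\G_{F_2}$ into a common intermediate vertex $u$ and set $v_1 := v_1^{F_1}$, $v_2 := v_2^{F_2}$; cleanness forces every path to cross $u$ exactly once, producing a unique $F_1$-prefix and $F_2$-suffix, whose counts multiply. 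In each case the cleanness invariant is preserved.

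The main obstacle is the star case $G = 1/(1-F)$ with $[1]\ts F = 0$, since $k$-networks have no ``$\varepsilon$''-edges with which to loop back cheaply. My proposed construction is to first form an auxiliary graph $\G'$ from $\G_F$ by identifying $v_1^F$ and $v_2^F$ into a single vertex $w$, and then attach two fresh vertices $s^{\ast}, t^{\ast}$ that will be the new source and sink. Concretely, for each edge $v_1^F \to y$ of $\G_F$ I add a parallel edge $s^{\ast} \to \hat{y}$ of the same color (with $\hat{y} = w$ if $y = v_2^F$ and $\hat{y} = y$ otherwise); symmetrically, for each edge $x \to v_2^F$ I add a parallel edge $\hat{x} \to t^{\ast}$; and for each edge $v_1^F \to v_2^F$ I additionally add a direct edge $s^{\ast} \to t^{\ast}$. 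The resulting network is clean by construction, since $s^{\ast}$ only has outgoing edges and $t^{\ast}$ only has incoming edges.

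The technical heart is verifying correctness of the star construction: paths from $s^{\ast}$ to $t^{\ast}$ in $\G_G$ should be in bijection with nonempty concatenations of $F$-walks in $\G_F$, meeting at $w$, and hence count $F + F^2 + F^3 + \cdots = F/(1-F)$, which is precisely the nonconstant part of $1/(1-F)$. The hypothesis $[1]\ts F = 0$ guarantees that each $F$-walk has positive length, so the geometric series makes combinatorial sense; cleanness of $\G_F$ is essential because it forces every ``first return'' to $w$ inside $\G'$ to correspond to exactly one complete $F$-walk (a walk cannot secretly exit $v_2^F$ mid-course, nor re-enter $v_1^F$), so the decomposition into irreducible pieces is unique. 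Once this bijection is in place, the path counts from source to sink in $\G_G$ match the nonconstant coefficients of $1/(1-F)$ on every color profile, completing the induction.
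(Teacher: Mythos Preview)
Your product step has a genuine gap: the serial-glue construction is correct only when $[1]F_1 = [1]F_2 = 0$. Because $v_1^{F_j} \neq v_2^{F_j}$ in a clean network, every $v_1 \to v_2$ path in your product network factors into a \emph{positive-length} $F_1$-prefix and a \emph{positive-length} $F_2$-suffix, so the network you build realizes the coefficients of $(F_1 - a)(F_2 - b)$ rather than of $F_1 F_2$, where $a = [1]F_1$ and $b = [1]F_2$. The induction genuinely produces nonzero constant terms, since $[1]\bigl(1/(1-F)\bigr) = 1$; concretely, with $F_1 = x$ and $F_2 = 1/(1-x)$ your product network has no path of length~$1$, yet $[x]\bigl(x/(1-x)\bigr) = 1$.

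The fix is exactly what the paper does: for profiles with $n_1+\cdots+n_k \ge 1$ one has $[x^n](F_1 F_2) = [x^n]\bigl((F_1-a)(F_2-b)\bigr) + a\,[x^n]F_2 + b\,[x^n]F_1$, so it suffices to adjoin $a$ parallel copies of $\G_{F_2}$ and $b$ parallel copies of $\G_{F_1}$ to your serial-glue network via the sum construction. Your star construction, by contrast, is correct as written and is actually more direct than the paper's; the paper avoids merging source and sink and instead uses the decomposition $\frac{1}{1-F} = 1 + F + F^2 + \frac{F^3}{1-F^2} + F\cdot\frac{F^3}{1-F^2}$, realizing $\frac{F^3}{1-F^2}$ by wiring four copies of $\G_F$ into a looped gadget.
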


\begin{proof} 
Let $\mathcal{Q}_k$ be the set of GFs, for which
there is a $k$-network as in the lemma.  We show that
$\mathcal{Q}_k$ satisfies the three conditions
in the definition of $\mathbb{N}$-rational generating
function, which proves the result.

Condition~$(1)$ is trivial.  To get $0\in \mathcal{R}_k^\prime$,
take the graph with vertices $v_1$ and $v_2$ and no edges.
Similarly, to get $x_i\in \mathcal{R}_k^\prime$, take the graph with
vertices $v_1$ and $v_2$ and a unique edge $(v_1,v_2)$ of color~$i$.

For~$(2)$, let $F,G \in \mathcal{Q}_k$, and let $\U,\W$ be the
corresponding $k$-networks.  
Attaching sinks and sources, and the rest of $\U$ and~$\W$ in parallel,
gives $F + G\in \mathcal{Q}_k$ (see Figure~\ref{f:nets}).
Similarly, if $[1]\ts F=[1]\ts G=0$,
attaching  $\U$ and~$\W$ sequentially gives GF $F \cdot G$.
More generally, for $a=[1]\ts F$ and $b=[1]\ts G$ not necessarily zero,
write $a\ts G = G +\ldots +G$ ($a$ times), and use
$$
F\cdot G \. = \. (F-a) \cdot (G-b) \. + \. a\ts G \. + \. b\ts F\ts.
$$
to obtain the desired $k$-network.

For~$(3)$, let $F \in \cQ_k$ and $[1]\ts F=0$.  To obtain $1/(1-F)$, write:
$$
\frac{1}{1-F} \, = \, 1+ \.  F \. + \. F\cdot F \. + \. \frac{F^3}{1-F^2}\. + \. F\ts \cdot\. \frac{F^3}{1-F^2}\,.
$$
For $\,\frac{F^3}{1-F^2}$\., arrange four copies
of $k$-network~$\U$ as shown in Figure~\ref{f:nets}.
The details are straightforward.
\end{proof}

\begin{figure}[hbt]
\begin{center}
%
%
\psfrag{F}{\small $\U$}
\psfrag{G}{\small $\W$}
\psfrag{F1}{\small $F\cdot G$}
\psfrag{F2}{\small $F + G$}
\psfrag{F0}{\small $F$}
\psfrag{FG}{\small $G$}
\psfrag{F3}{\small $\frac{F^3}{1-F^2}$}
\epsfig{file=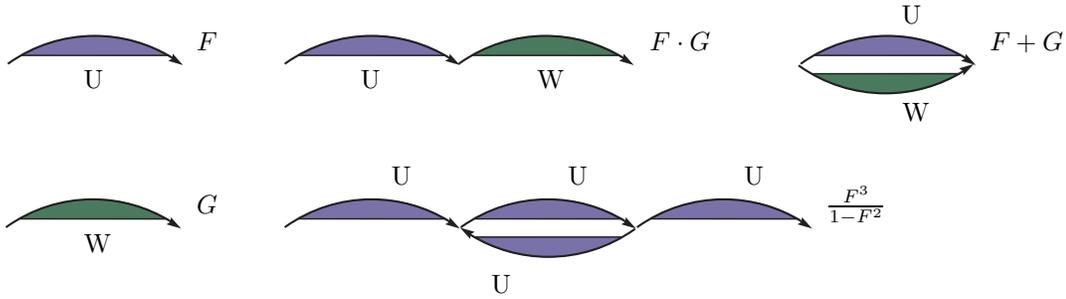,width=13.6cm}
\end{center}
\caption{Networks giving \ts $F\cdot G$, \ts $F+G$ \ts and \ts $F^3/(1-F^2)$.}
\label{f:nets}
\end{figure}

\subsection{Proof of Lemma ~\ref{D0}}
Let $G\in \mathcal{R}_k$ be such that
$f(n)=\bigl[x_1^{n}\ldots x_k^{n}\bigr]\,G$.
By Lemma~\ref{D6}, there is a $k$-network $\W$ with source~$v_1$
and sink~$v_2$, such that there are exactly~$f(n)$ paths from $v_1$
to~$v_2$, which pass through $n$ edges of color~$i$, for all~$i$.

Let $\varepsilon,\alpha_1,\ldots,\alpha_k>0$ be irrational numbers,
such that the only rational linear dependence between them is
$\alpha_1+\ldots+\alpha_k=1$. Assign weight~$\al_i$ to each edge
in~$\W$ with color~$i$.  Add vertex~$v_0$ and edges $(v_0,v_1)$
$(v_2,v_0)$, both of weight $\varepsilon/2$.  Denote the resulting
graph by $\hW$.

Note that cycles in~$\hW$ which start at~$v_0$ of \tweight $n+\varepsilon$
are in bijection with paths from $v_1$ to $v_2$ in~$\W$ with exactly~$n$
edges of each color. Therefore, there are exactly $f(n)$ of them.

In notation of the proof of Lemma~\ref{lemGraph}, we associate a
different tile boundary $\partial_i$ with vertices $v_i$, $1\le i \le N$,
and the vertical line segment with the vertex~$v_0$.  We can always
ensure \ts width$(\partial_i) < \frac1{3}\min\{\ve/2,\al_1,\ldots,\al_k\}$.

For every edge $e=(v_i,v_j)$ in~$\hW$ with weight $w_e$, denote by $\tau_e$
the unique tile with height~1, $\partial_L(\tau_e)=\partial_i$,
$\partial_R(\tau_e)=\partial_j$ and area $|\tau_e|=w_e$.  Note that such
tile exists by the width condition above.  Let $T$ be the set of
tiles~$\tau_e$.  From above, for all $n\geq 1$, the number of tilings
of $\Rven$ by~$T$ is equal to the number of cycles in $\hW$ starting
at $v_0$ of \tweight $n+\varepsilon$, which is equal to $f(n)$ by assumption.

When $n=0$, this tile set has zero tilings. Since \ts $a:= [1]\ts F\in \nn$,
we can make the number of tilings of $\textrm{R}_\varepsilon$ equal to $a$
by adding $a$ copies of a $1\times\varepsilon$ rectangle to $T$.
This does not change the number of tilings for any $n\geq 0$,
since every tiling for $n\geq0$ must already has two tiles of area~$\varepsilon/2$,
and thus cannot contain more tiles of area~$\ve$.

Finally, if $T$ has multiple copies of the same tile, replace each copy
with two tiles which only fit together with each other, to make a copy
of that tile.  We can always do this in such as way to make all new
tiles distinct.  This implies that $f\in \cF$, as desired.
\ $\sq$

\bigskip

\section{Binomial multisums are diagonals of $\mathbb{N}$-rational functions}\label{s:BinDiag}

\nin
In this section, we prove the following result towards the proof of Main Theorem~\ref{t:main-countR}.

\begin{lemma}\label{D1}
$\mathcal{B}\subseteq\cN$.
\end{lemma}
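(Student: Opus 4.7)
\smallskip

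\noindent\textbf{Proof plan.} Given $f\in\cB$, the plan is to exhibit $G\in\cR_k$ such that $f(n)=[x_1^n\cdots x_k^n]\ts G$. The basic building block is the $\cR_2$-rational identity
\[
\frac{1}{1-u-w}\.=\.\sum_{p,q\ge 0}\binom{p+q}{p}\ts u^p w^q
\]
and its $r$-fold product $P(\vec u,\vec w)=\prod_{i=1}^{r}(1-u_i-w_i)^{-1}\in\cR_{2r}$, whose coefficient at $\vec u^{\vec p}\vec w^{\vec q}$ is $\prod_i\binom{p_i+q_i}{p_i}$. The approach is to substitute monomials $u_i\mapsto\prod_j x_j^{U_{ij}}$, $w_i\mapsto\prod_j x_j^{W_{ij}}$ in fresh variables $x_1,\ldots,x_k$, chosen so that extracting $[x_1^n\cdots x_k^n]$ from the resulting GF imposes exactly the constraints $p_i=\beta_i(v,n)$ and $q_i=\alpha_i(v,n)-\beta_i(v,n)$ summed over $v\in\zz^d$. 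As a prototype,
\[
\sum_k\binom{n}{k}^2\.=\.\binom{2n}{n}\.=\.\bigl[x_1^nx_2^ny_1^ny_2^n\bigr]\frac{1}{(1-x_1y_1-x_1y_2)(1-x_2y_1-x_2y_2)}
\]
illustrates how shared variables $y_j$ across factors impose the linear relations that carry out the summation over $k$.

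\smallskip

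\noindent Before the main construction I would verify that $\cN$ is closed under addition: for $f_j(n)=[x_1^n\cdots x_{k_j}^n]F_j$ with $F_j\in\cR_{k_j}$, one enlarges both $F_j$ to a common variable set by multiplying by factors $(1-y_l)^{-1}\in\cR$ (valid since $[y_l^n](1-y_l)^{-1}=1$), after which $F_1+F_2\in\cR$ realizes $f_1+f_2$ as a main diagonal. Using this, I would decompose $v\in\zz^d$ into its $2^d$ orthants via substitutions $v_j\mapsto-v'_j-1$ (with $v'_j\in\nn$) for each negative coordinate, reducing to sums over $v\in\nn^d$ with adjusted (sign-flipped) coefficients and shifted constants.

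\smallskip

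\noindent For a single-orthant sum, the construction of the exponent matrix $(U_{ij},W_{ij})$ is a problem in integer linear algebra: the $k$ conditions "exponent of $x_j$ equals $n$" should be equivalent to the condition "$\exists\,v\in\nn^d$ with $p_i=\beta_i(v,n)$, $q_i=\gamma_i(v,n)$ for all $i$". Viewing the map $(n,v)\mapsto(\beta_i,\gamma_i)_{i=1}^r$ as an affine embedding of a sublattice of $\zz^{2r}$, one constructs a dual basis of integer linear functionals each evaluating to~$n$ on that parameterization; these functionals are exactly the columns of $(U_{ij},W_{ij})$. The constant shifts $a''_i,b''_i$ are absorbed via a monomial prefactor, while the finite exceptional contributions coming from the convention $\binom{-1}{0}=1$ are added as a polynomial correction using the addition closure above.

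\smallskip

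\noindent The main obstacle -- and the technical heart of the proof -- is that the dual functionals may carry negative integer entries, which cannot be realized as nonnegative-exponent monomial substitutions within $\cR$. To address this, I would perform an induction on the $\ell^1$-norm of the negative coefficients of the affine forms $\beta_i,\gamma_i$, applying Pascal's identity $\binom{a}{b}=\binom{a-1}{b}+\binom{a-1}{b-1}$ (and, when needed, the Vandermonde convolution) to rewrite each binomial with a mixed-sign affine form as a finite sum of binomials whose forms have strictly smaller negativity. Iterating until all forms are nonnegative on the orthant, each resulting summand admits the monomial-substitution construction above, and closure under addition aggregates them into a single $G\in\cR_k$ with $[x_1^n\cdots x_k^n]G=f(n)$. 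Proving termination of this reduction, tracking the constant-shift bookkeeping at every Pascal step, and correctly handling the $\binom{-1}{0}=1$ boundary effects are the delicate points I expect to demand the most care.
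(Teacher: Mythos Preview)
Your Pascal-reduction step is where the argument breaks. The identity $\binom{a}{b}=\binom{a-1}{b}+\binom{a-1}{b-1}$ only shifts the \emph{constant terms} of the affine forms $\alpha_i,\beta_i$; it never touches the coefficients of $v_1,\ldots,v_d$. Hence the ``$\ell^1$-norm of the negative coefficients'' is invariant under Pascal, and your induction makes no progress. Vandermonde does not help either: it introduces a new unbounded summation index and, more seriously, both Pascal and Vandermonde can fail under the paper's modified convention $\binom{-1}{0}=1$, so you cannot freely apply them inside these sums. Relatedly, your ``polynomial correction'' for the $\binom{-1}{0}$ boundary is not finite: the locus $\{\alpha_i=-1,\ \beta_i=0\}$ is an affine sublattice in $(v,n)$ and typically contributes to infinitely many~$n$.

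The paper circumvents negative coefficients by a different mechanism. First (Lemma~\ref{D4}) it uses finiteness of $f(n)$ to prove a uniform bound $|v_j|\le cn$. Then (Lemma~\ref{D5}) it introduces, for each $j$, \emph{two} geometric factors $(1-x_j h_j)^{-1}$ and $(1-x_j h_j')^{-1}$ and reads off the coefficient of $x_j^{2cn}$; the split $2cn=(cn+v_j)+(cn-v_j)$ makes both pieces nonnegative, and a negative $v_j$-coefficient in $\beta_i$ is handled simply by placing the corresponding $b_i$-power in $h_j'$ rather than $h_j$. The binomial $\binom{\alpha_i}{\beta_i}$ itself, including the case $\alpha_i=-1,\beta_i=0$, is produced directly by the factor $1+a_i\cdot\frac{a_i(1+b_i)}{1-a_i(1+b_i)}$ via $[a_i^{\alpha_i+1}b_i^{\beta_i}]$. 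This yields a \emph{quasi}-diagonal (different variables carry different multiples of~$n$), which is then upgraded to a genuine diagonal by a separate structural lemma (Lemma~\ref{D7}). The key idea you are missing is thus the boundedness-plus-shift trick, not an identity-based rewriting of the affine forms.
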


The proof of the lemma follows easily from five sub-lemmas, three
on diagonals and two on binomial multisums.  While the
former are somewhat standard, the latter are rather technical;
we prove them in the next section.

\subsection{Diagonals}
We start with the following three simple results.

\begin{lemma}\label{l:D2-multi}
The set of diagonals of an $\mathbb{N}$-rational generating functions
is closed under addition and multiplication.
\end{lemma}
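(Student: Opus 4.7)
\smallskip

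The plan is to build the desired diagonals by combining $\nn$-rational functions in disjoint sets of variables, so the only real work is to choose an auxiliary factor that contributes a harmless $1$ to the coefficient extraction.

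Suppose $f(n) = \bigl[x_1^n\cdots x_k^n\bigr]\ts F(x_1,\ldots,x_k)$ with $F\in\cR_k$, and $g(n) = \bigl[y_1^n\cdots y_\ell^n\bigr]\ts G(y_1,\ldots,y_\ell)$ with $G\in\cR_\ell$. First I would observe that $\cR_k\ssu\cR_{k+\ell}$ and $\cR_\ell\ssu\cR_{k+\ell}$: the generators $x_1,\ldots,x_k,y_1,\ldots,y_\ell$ all lie in $\cR_{k+\ell}$, and the closure operations $(2)$ and $(3)$ are the same, so the identical construction that produces $F$ inside $\cR_k$ also produces $F$ inside $\cR_{k+\ell}$, and likewise for $G$.

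For multiplication, set
$$
H_\times(x_1,\ldots,x_k,y_1,\ldots,y_\ell) \, = \, F(x_1,\ldots,x_k)\cdot G(y_1,\ldots,y_\ell)\ts.
$$
Then $H_\times \in \cR_{k+\ell}$ by condition~$(2)$, and since the variable sets are disjoint,
$$
\bigl[x_1^n\cdots x_k^n\ts y_1^n\cdots y_\ell^n\bigr]\, H_\times \, = \, f(n)\ts g(n)\ts,
$$
so $f\ts g\in\cN$. For addition, I would introduce the auxiliary series
$$
E_k(x_1,\ldots,x_k) \, = \, \frac{1}{1-x_1\cdots x_k}\ts, \qquad E_\ell(y_1,\ldots,y_\ell) \, = \, \frac{1}{1-y_1\cdots y_\ell}\ts,
$$
which lie in $\cR_{k+\ell}$ by condition~$(3)$ (the monomials $x_1\cdots x_k$ and $y_1\cdots y_\ell$ have zero constant term). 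Since $[y_1^n\cdots y_\ell^n]\ts E_\ell = 1$ and similarly for $E_k$, the function
$$
H_+ \, = \, F(x_1,\ldots,x_k)\cdot E_\ell(y_1,\ldots,y_\ell) \, + \, E_k(x_1,\ldots,x_k)\cdot G(y_1,\ldots,y_\ell)
$$
belongs to $\cR_{k+\ell}$ and satisfies
$$
\bigl[x_1^n\cdots x_k^n\ts y_1^n\cdots y_\ell^n\bigr]\, H_+ \, = \, f(n)\cdot 1 \. + \. 1\cdot g(n) \, = \, f(n)+g(n)\ts,
$$
so $f+g\in\cN$.

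There is no genuine obstacle here; the only thing to be careful about is verifying that the newly introduced generators $E_k$ and $E_\ell$ really qualify for condition~$(3)$ (their ``seed'' monomials have constant term~$0$), and that $F$ and~$G$ can legitimately be regarded as members of $\cR_{k+\ell}$ rather than only of their original classes. Both checks are immediate from the recursive definition of $\cR_{k+\ell}$.
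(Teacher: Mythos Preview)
Your argument is correct and follows essentially the same route as the paper: treat $F$ and $G$ as series in disjoint variable sets inside $\cR_{k+\ell}$, take their product for $f\cdot g$, and for $f+g$ multiply each by an $\nn$-rational factor in the other block of variables whose diagonal coefficient is~$1$. The only cosmetic difference is that the paper uses $\prod_i \frac{1}{1-y_i}$ and $\prod_i \frac{1}{1-x_i}$ as the auxiliary factors, whereas you use $\frac{1}{1-y_1\cdots y_\ell}$ and $\frac{1}{1-x_1\cdots x_k}$; both choices have diagonal coefficient~$1$ and both lie in $\cR_{k+\ell}$, so the proofs are interchangeable.
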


\begin{proof} Let $f, g \in \cN$.  We have
$$
f(n)=\bigl[x_1^{n}\ldots x_{k}^{n}\bigr]\,F(x_1,\ldots,x_{k}), \quad
g(n)=\bigl[y_1^{n}\ldots y_{\ell}^{n}\bigr]\,G(y_1,\ldots,y_{\ell})\.,
$$
for some $F\in \cR_k$ and $G\in \cR_\ell$. Consider
$A(x_1,\ldots x_{k},y_1,\ldots,y_{\ell})$ defined as
$$
A(x_1,\ldots x_{k},y_1,\ldots,y_{\ell})=
\left(\prod_{i=1}^{\ell}\frac{1}{1-y_i}\right)F\bigl(x_1,\ldots,
x_{k}\bigr)+\left(\prod_{i=1}^{k}\frac{1}{1-x_i}\right)G\bigl(y_1,\ldots,y_{\ell}\bigr)\..
$$
It follows from the definition of $\nn$-rational functions, that $A\in \mathcal{R}_{k+\ell}$.
We have
$$\aligned
& \bigl[x_1^{n}\ldots x_{k}^{n}y_1^{n}\ldots y_{\ell}^{n}\bigr]\,A \, = \,
\bigl[x_1^{n}\ldots x_{k}^{n}\bigr]\, F\bigl(x_1,\ldots,x_{k}\bigr) \\
& \quad \. + \. \bigl[y_1^{n}\ldots y_{\ell}^{n}\bigr]\, G\bigl(y_1,\ldots,y_{\ell}\bigr)
\, = \, f(n) + g(n)\..
\endaligned
$$
Similarly, define
$$B(x_1,\ldots x_{k},y_1,\ldots,y_{\ell}) \, = \, F\bigl(x_1,\ldots,
x_{k}\bigr) \cdot G\bigl(y_1,\ldots,y_{\ell}\bigr)\ts,
$$
and observe that
$$
\bigl[x_1^{n}\ldots x_{k}^{n}y_1^{n}\ldots y_{\ell}^{n}\bigr]\,B \,
= \, f(n) \cdot g(n)\.,
$$
as desired. \end{proof}

A function $f$ is called a {\em quasi-diagonal} of an $\mathbb{N}$-rational generating function if
$$
f(n)\. = \. \bigl[x_1^{c \ts n}\ldots x_{k}^{c\ts n}\bigr]\,F(x_1,\ldots,x_{k})\ts,
$$
for some fixed constant $c\in\mathbb{P}$.

\begin{lemma}\label{D7}
For every function $f$ which is the quasi-diagonal of $F\in \cR_k$,
there exists $\ell\in \pp$ and $G\in \cR_\ell$, such that $f$ is
the diagonal of~$G$.
\end{lemma}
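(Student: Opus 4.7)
The plan is to realize the $c$-quasi-diagonal of $F$ as the ordinary diagonal of an enlarged network obtained by inflating each color into a cyclic counter of length $c$. First, I would apply Lemma~\ref{D6} to produce a $k$-network $\W$ with source $v_1$ and sink $v_2$, whose source-to-sink paths with exactly $n_i$ edges of color $i$ are counted by $[x_1^{n_1}\cdots x_k^{n_k}]F$, for every $(n_1,\ldots,n_k)\in\nn^k$ with $n_1+\cdots+n_k\ge 1$.

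Next, I would construct a $(kc)$-network $\W'$ on the vertex set $V(\W)\times(\zz/c\zz)^k$: for each edge $u\to v$ of color $i$ in $\W$ and each $\mathbf j=(j_1,\ldots,j_k)\in(\zz/c\zz)^k$, put an edge $(u,\mathbf j)\to(v,\mathbf j+\mathbf e_i)$ of new color $(i,j_i)\in\{1,\ldots,k\}\times\{0,\ldots,c-1\}$, and designate $(v_1,\mathbf 0)$ as source and $(v_2,\mathbf 0)$ as sink. Since the counter $\mathbf j$ must return to $\mathbf 0$ by the end of any source-to-sink path, the projected $\W$-path has color-$i$ multiplicity some multiple $cm_i$, and along such a path each new color $(i,j)$ occurs exactly $m_i$ times. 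Writing $G_0$ for the generating function of $\W'$ by new-color multiplicities, one obtains
$$
[y_{1,0}^n\cdots y_{k,c-1}^n]\,G_0 \,=\, [x_1^{cn}\cdots x_k^{cn}]\,F \,=\, f(n) \qquad \text{for every } n\ge 1.
$$
Because $[1]G_0=0$ (source and sink are distinct) while $f(0)=[1]F$, I would set $G:=G_0+f(0)$; every nonnegative integer lies in $\cR_{kc}$ (built from $0$ and $1=1/(1-0)$ by repeated addition), so $G\in\cR_{kc}$ and $f$ is its diagonal.

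The main obstacle, and the main technical step, is showing $G_0\in\cR_{kc}$. This is the converse of Lemma~\ref{D6}, essentially Kleene's theorem for $\nn$-rational series, which I would establish by the standard state-elimination procedure: iteratively remove each vertex of $\W'$ other than the source and sink, replacing every configuration $u\to v\to w$ by a direct $u\to w$ edge with label $p\cdot(1-r_v)^{-1}\cdot q$, where $r_v$ is the current self-loop label at $v$. Every initial edge label in $\W'$ is a monomial $y_{i,j}$ of positive degree, and $\W'$ has no initial self-loops at all (since $\mathbf j+\mathbf e_i\ne\mathbf j$); consequently a straightforward induction on the elimination steps shows that every label remains in $\cR_{kc}$ and every self-loop label retains zero constant term, so each application of $1/(1-r_v)$ stays within the rules defining $\cR_{kc}$. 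The residual edge from source to sink then carries label $G_0\in\cR_{kc}$, yielding the lemma with $\ell=kc$.
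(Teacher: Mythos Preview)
Your argument is correct and takes a genuinely different route from the paper. The paper works syntactically, by structural induction on the $\nn$-rational expression for $F$: it shows how to ``split'' a single variable $x_1$ into two, producing $F_\circ\in\cR_{k+1}$ with $[x_0^{c_0}x_1^{c_1}\cdots x_k^{c_k}]\,F_\circ=[x_1^{c_0+c_1}x_2^{c_2}\cdots x_k^{c_k}]\,F$ whenever $c_1\ge 1$, handling the sum, product, and quasi-inverse cases separately, and then iterates this splitting $c$ times on each variable. You instead pass through the automaton picture: apply Lemma~\ref{D6} to get a network for~$F$, take a product with a cyclic $(\zz/c\zz)^k$-counter so that each old color is refined into $c$ residue classes, and then invoke the converse of Lemma~\ref{D6} (Kleene's theorem via state elimination) to recover an $\nn$-rational expression in $kc$ variables. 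Your approach is cleaner conceptually and sidesteps the somewhat delicate bookkeeping the paper needs for the product and quasi-inverse cases; the price is that you must supply the converse of Lemma~\ref{D6}, which the paper never states.

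One small point on that converse: the invariant ``every self-loop label retains zero constant term'' is not quite strong enough to propagate by itself. Eliminating a vertex $v$ can create a self-loop at $u$ with contribution $p\cdot(1-r_v)^{-1}\cdot q$ coming from $u\to v\to u$, and the constant term of this product vanishes only if $[1]\,p=0$ or $[1]\,q=0$, which your stated invariant does not guarantee. The fix is immediate: strengthen the invariant to ``\emph{every} edge label has zero constant term''. This holds initially (all labels are degree-one monomials $y_{i,j}$) and is preserved, since each new label is a sum of an old label and terms of the form $p\cdot(1-r_v)^{-1}\cdot q$ with $[1]\,p=0$. With this adjustment your state-elimination argument goes through and yields $G_0\in\cR_{kc}$ as claimed.
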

\begin{proof}
First, we show that for every~$F\in \cR_k$,
there exists a function $\Fpr \in \cR_{k+1}$,
such that for all $c_0, c_1,\ldots, c_k\in\mathbb{N}$~:
\[\bigl[x_0^{c_0} x_1^{c_1} x_2^{c_2}\ldots x_{k}^{c_k}\bigr]\,\Fpr(x_0,x_1,x_2,\ldots,x_{k})=
  \begin{cases}
  \bigl[x_1^{c_0+c_1}x_2^{c_2}\ldots x_{k}^{c_k}\bigr]\,F(x_1,\ldots,x_{k}) & \text{if } c_1\geq 1\ts,\\
   {\ \. 0  }     & \text{otherwise\ts.}
  \end{cases}
\]
We prove this by structural induction in the definition of~$\cR_k$.
For case~$(1)$, let $\Fpr=0$ for $F=x_i$, $i\ge 2$, and $\Fpr = x_1$
for $F = x_1$.  In case~$(2)$, let $\Fpr=\Gpr+\Hpr$ for $F=G+H$.
For~$F=G\cdot H$, let
$$
\Fpr(x_0,\ldots,x_{k})\. =\. \Gpr(x_0,\ldots,x_{k})\ts H(x_1,x_2,\ldots,x_k)\ts + \ts
G(x_0, x_2,\ldots,x_{k})\ts\Hpr(x_0,\ldots,x_k)\ts.
$$
For case~$(3)$, for $F=1/(1-G)$, let
$$
\Fpr(x_0,\ldots,x_{k})\. = \. F(x_0,x_2,\ldots,x_k)\ts\Gpr(x_0,\ldots,x_{k})\ts F(x_1,x_2,\ldots,x_k).
$$

\nin
It is clear that this works for case~(1), for $F=G+H$, and when $c_1=0$, so we only need to worry about the cases where $c_1>0$ and where $F=G\cdot H$ or $F=1/(1-G)$.

For $F=G\cdot H$, recall that
$$\bigl[x_1^{c_0+c_1}x_2^{c_2}\ldots x_{k}^{c_k}\bigr]\,F(x_1,\ldots,x_{k})$$ equals the sum over all $(d_1+e_1,\ldots, d_k+e_k)=(c_0+c_1,c_2,\ldots, c_k)$
$$\bigl[x_1^{d_1}x_2^{d_2}\ldots x_{k}^{d_k}\bigr]\,G(x_1,\ldots,x_{k})\bigl[x_1^{e_1}x_2^{e_2}\ldots x_{k}^{e_k}\bigr]\,H(x_1,\ldots,x_{k}).$$
Note that in the formula for $\Fpr,$ all instances of $x_0$ in $\Gpr(x_0,\ldots,x_{k})\ts H(x_1,x_2,\ldots,x_k),$ come from the $\Gpr(x_0,\ldots,x_{k})$. Thus, if there are $d_1$ instances of $x_0$ or~$x_1$ in the first summand, exactly $d_1-c_0$ instances of $x_1$ come from $\Gpr$. Similarly, if there are $e_1$ instances of $x_0$ or~$x_1$ in the second summand, exactly $e_1-c_1$ instances of $x_0$ must come from $\Hpr$.

We break the contributions to $\Fpr$ into two cases: $(a)$ with $d_1>c_0$, and $(b)$ with $d_1\leq c_0$. In the case~$(a)$, note that
$$\bigl[x_0^{c_0}x_1^{d_1-c_0}x_2^{d_2}\ldots x_{k}^{d_k}\bigr]\,\Gpr(x_0,\ldots,x_{k})\. =\. \bigl[x_1^{d_1}x_2^{d_2}\ldots x_{k}^{d_k}\bigr]\,G(x_1,\ldots,x_{k})\ts,
$$ 
and
$$\bigl[x_0^{e_1-c_1}x_1^{c_1}x_2^{e_2}\ldots x_{k}^{e_k}\bigr]\,\Hpr(x_0,\ldots,x_{k})\. =\. 0\ts.
$$ 
In the case~$(b)$, we similarly have:
$$\bigl[x_0^{c_0}x_1^{d_1-c_0}x_2^{d_2}\ldots x_{k}^{d_k}\bigr]\,\Gpr(x_0,\ldots,x_{k})\.=\. 0\ts,
$$ 
and
$$
\bigl[x_0^{e_1-c_1}x_1^{c_1}x_2^{e_2}\ldots x_{k}^{e_k}\bigr]\,\Hpr(x_0,\ldots,x_{k})\. 
=\. \bigl[x_1^{e_1}x_2^{e_2}\ldots x_{k}^{e_k}\bigr]\,H(x_1,\ldots,x_{k})\ts.
$$
Therefore, in the case~$(a)$, we have
$$
\bigl[x_0^{c_0}x_1^{d_1-c_0}x_2^{d_2}\ldots x_{k}^{d_k}\bigr]\,\Gpr(x_0,\ldots,x_{k})\bigl[x_1^{e_1}x_2^{e_2}\ldots x_{k}^{e_k}\bigr]\,H(x_1,\ldots,x_{k})
$$
$$
=\. \bigl[x_1^{d_1}x_2^{d_2}\ldots x_{k}^{d_k}\bigr]\,G(x_1,\ldots,x_{k})\bigl[x_1^{e_1}x_2^{e_2}\ldots x_{k}^{e_k}\bigr]\,H(x_1,\ldots,x_{k})\ts,
$$ 
and
$$
\bigl[x_1^{d_1}x_2^{d_2}\ldots x_{k}^{d_k}\bigr]\,G(x_1,\ldots,x_{k})
\bigl[x_0^{e_1-c_1}x_1^{c_1}x_2^{e_2}\ldots x_{k}^{e_k}\bigr]\,\Hpr(x_0,\ldots,x_{k})\.=\. 0\ts.
$$ 
In the case~$(b)$, we get a similar result with the r.h.s.'s interchanged.  
We conclude:  
$$
\bigl[x_0^{c_0} x_1^{c_1} x_2^{c_2}\ldots x_{k}^{c_k}\bigr]\,
\Fpr(x_0,x_1,x_2,\ldots,x_{k})\.=\.
\bigl[x_1^{c_0+c_1}x_2^{c_2}\ldots x_{k}^{c_k}\bigr]\,F(x_1,\ldots,x_{k})\ts.
$$

For case~(3), we think of any contribution to $F$ as coming from $G^r$ for some $r\in \nn$, 
and break into cases based on how many copies of $G$ we go through in this $G^r$ before 
we have seen more than $c_0$ instances of $x_1$.  We then proceed similarly.

\smallskip

Now, for every fixed $m\ge 2$, $F\in \cR_k$ and a function~$f(n)$ defined as
$$
f(n)\.=\.\bigl[x_1^{m \ts n}\ldots \ts x_{k}^{m\ts n}\bigr]\,F(x_1,\ldots,x_{k})\ts,
$$
we may recursively apply the above result to split each variable $x_i$ into~$m$
variables $x_{i\ts j}$, for $j=1,\ldots,m$.  We get a function $G\in\cR_{m\ts k}$
satisfying
$$
\bigl[x_{1 1}^{c_{11}}\ts x_{12}^{c_{12}} \cdots \ts x_{k\ts m}^{c_{k\ts m}}\bigr]\,G(x_{11},x_{12}, \ldots, x_{k\ts m})
\. = \. \bigl[x_1^{c_1}\ldots x_{k}^{c_k}\bigr]\,F(x_1,\ldots,x_{k}),
$$
whenever $c_{i1}+\ldots+c_{i\ts m}=c_i$ and $c_{i\ts j}\geq 1$, for all $i$ and~$j$.
In particular, for all $c_{i\ts j}=n\ge 1$, this gives
$$
\bigl[x_{11}^n \ts x_{12}^n \ts \cdots \ts x_{k\ts m}^n\bigr]\,G(x_{11},x_{12},\ldots,x_{k\ts m})
\. = \. f(n)\ts.
$$
Further $[1]\ts G =0$, so we can simply add
the constant term $f(0)$ to get the desired function with the diagonal~$f(n)$.
\end{proof}

\begin{lemma}\label{D8}
Let $f\in \cN$ and $g: \nn \to \nn$ satisfy $f(n)=g(n)$ for all
$n\ge 1$.  Then $g \in \cN$.
\end{lemma}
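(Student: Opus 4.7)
The plan is to construct an $\nn$-rational generating function in one extra variable whose full diagonal is~$g$. Write $f(n) = \bigl[x_1^n\cdots x_k^n\bigr]\ts F(x_1,\ldots,x_k)$ for some $F \in \cR_k$. Since $g(n)=f(n)$ for $n\ge 1$ but $g(0)\in\nn$ is unconstrained, the key obstacle is that a naive correction ``add $(g(0)-f(0))\cdot [n=0]$ to~$F$'' fails whenever $g(0)<f(0)$: the class $\cR_k$ is not closed under subtraction, so one cannot cancel the constant term of~$F$ internally.

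The trick I would use is to first erase the $n=0$ value of the diagonal by introducing a fresh variable~$y$, and then add~$g(0)$ back as a constant. Define
$$
G(x_1,\ldots,x_k,y) \, := \, F(x_1,\ldots,x_k)\cdot\frac{y}{1-y} \, + \, g(0).
$$
Since $y\in\cR_1$ and $[1]\ts y=0$, we have $\frac{1}{1-y}\in\cR_1$, and therefore $\frac{y}{1-y}\in\cR_1\subseteq\cR_{k+1}$. Likewise $1=\frac{1}{1-0}\in\cR_{k+1}$, so the constant $g(0)\in\nn$ lies in $\cR_{k+1}$ as a sum of $g(0)$ copies of~$1$. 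Since $F\in\cR_k\subseteq\cR_{k+1}$ and $\cR_{k+1}$ is closed under $+$ and $\cdot$, we obtain $G\in\cR_{k+1}$.

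Using $[y^n]\frac{y}{1-y}=1$ for $n\ge 1$ and $0$ for $n=0$, and noting that the constant $g(0)$ contributes only to the $n=0$ coefficient, the diagonal of~$G$ works out to
$$
\bigl[x_1^n\cdots x_k^n\ts y^n\bigr]\ts G \, = \, f(n)\cdot [n\ge 1] \, + \, g(0)\cdot [n=0] \, = \, g(n),
$$
for all $n\ge 0$. Thus $g\in\cN$. The entire difficulty is conceptual rather than technical: one has to see that widening the number of variables by one gives just enough room to filter out the unwanted constant term and replace it with the desired value, thereby bypassing the lack of subtraction in~$\cR_k$.
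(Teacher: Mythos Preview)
Your proof is correct and follows essentially the same idea as the paper's. The paper introduces the indicator functions $j(n)=[n=0]$ and $h(n)=[n\ge 1]$ as diagonals of the constants $1$ and of $x/(1-x)$, then writes $g(n)=g(0)\ts j(n)+h(n)\cdot f(n)$ and invokes Lemma~\ref{l:D2-multi} (closure of $\cN$ under addition and multiplication). Your construction $G=F\cdot \frac{y}{1-y}+g(0)$ is precisely the explicit $\nn$-rational GF that this decomposition produces; you have simply unwound the closure lemma in this special case rather than citing it.
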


\begin{proof}
The functions
\[
 j(n)=
  \begin{cases}
  1 & \text{if } n= 0,\\
   0       & \text{otherwise}
  \end{cases}
  \qquad \text{and} \qquad
 h(n)=
  \begin{cases}
  0 & \text{if } n = 0,\\
  1        & \text{otherwise}
  \end{cases}
\]
are trivially diagonals of functions $1$ and \ts $x/(1-x)\in \cR_1$.  Writing
$$
g(n) \. = \. g(0)\ts j(n) \cdot f(n) \. + \.  h(n) \cdot f(n)\.,
$$
implies the result by Lemma~\ref{l:D2-multi}.

\subsection{Finiteness of binomial multisums}
Next, we find a bound on which terms can contribute to a binomial multisum.

\begin{lemma}\label{D4}
In notation of~\,\.$\S\ref{ss:three-binom}$, let
\begin{displaymath}
f(n)\,=\,\sum_{v\in\mathbb{Z}^{d}}\,\prod_{i=1}^r{\alpha_i(v,n)\choose\beta_i(v,n)}
\end{displaymath}
be finite for all $n\in\mathbb{N}$.
Then there exists a constant $c\in\mathbb{N}$, such that for all $n\in\pp$,
and $|v_i|>c\ts n$ for all~$i$, the product on the right hand side is zero.
\end{lemma}

Finally, we show that binomial sums bounded as in Lemma~\ref{D4}
are in fact quasi-diagonals of $\mathbb{N}$-rational generating functions.

\begin{lemma}\label{D5}
In notation of~\,\.$\S\ref{ss:three-binom}$, let
$f(n):\mathbb{N}\rightarrow\mathbb{N}$ be defined as
\begin{displaymath}
f(n)\, = \, \sum_{v\in\mathbb{Z}^{d},|v_i|\leq c\ts n} \,
\prod_{i=1}^r{\alpha_i(v,n)\choose\beta_i(v,n)}\ts,
\end{displaymath}
for all $n\in \pp$.
Then $f(n)$ agrees with a quasi-diagonal of an $\mathbb{N}$-rational generating function
at all $n\geq 1$.
\end{lemma}

Both lemmas are proved in the next section.

\subsection{Proof of Lemma~\ref{D1}}
By Lemmas~\ref{D4} and~\ref{D5}, every function $f(n)$ as in Lemma~\ref{D4}
agrees with a quasi-diagonal of an $\mathbb{N}$-rational generating function
at all $n\geq 1$. By Lemma~\ref{D7}, any function of this form agrees with
a diagonal of an $\mathbb{N}$-rational generating function
at all $n\geq 1$. By Lemma~\ref{D8}, any such function is in $\cN$.
\end{proof}

\bigskip

\section{Proofs of lemmas~\ref{D4} and~\ref{D5}}\label{s:tech-proofs}

\subsection{A geometric lemma}
We first need the following simple result; we include a short proof
for completeness.

\begin{lemma}\label{bnd}
Let $\ts\alpha_1,\ldots,\alpha_r: \mathbb{R}^{d} \to \mathbb{R}\ts$
be integer coefficient affine functions.
Let $P\ssu \rr^d$ be the (possibly unbounded)
polyhedron of points satisfying $\alpha_i\geq0$ for all~$i$.
If~$P$ contains a positive finite number of integer lattice points,
then $P$ is bounded.
\end{lemma}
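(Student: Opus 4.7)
The plan is a standard recession-cone argument: I will show that unboundedness of $P$ would allow us to translate any integer point of $P$ by a non-zero integer direction and stay inside $P$, producing infinitely many lattice points and contradicting the hypothesis.

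Concretely, write $\alpha_i(x) = \langle a_i, x\rangle + c_i$ with $a_i \in \zz^d$, $c_i \in \zz$, so that $P = \{x \in \rr^d : \langle a_i,x\rangle \ge -c_i \text{ for all } i\}$. The \emph{recession cone} of $P$ is
$$
\rc(P) \. = \. \{u \in \rr^d \. : \. \langle a_i,u\rangle \ge 0 \ \text{ for all } \ i\}\ts.
$$
The first step is to recall the standard fact from polyhedral geometry that a non-empty polyhedron $P$ is bounded if and only if $\rc(P) = \{0\}$; equivalently, $P$ is unbounded iff $\rc(P)$ contains a non-zero vector. I would simply cite this (e.g.\ from Schrijver's book), or give a two-line proof using the fact that an unbounded closed convex set contains a ray.

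Now assume for contradiction that $P$ is unbounded, so that $\rc(P)$ contains some $u \ne 0$. Since the defining inequalities of $\rc(P)$ have integer (hence rational) coefficients, $\rc(P)$ is a rational polyhedral cone; by scaling $u$ to clear denominators and take any nonzero integer multiple of a rational vector in $\rc(P)$, I may assume $u \in \zz^d \smallsetminus \{0\}$. By hypothesis, $P$ contains at least one integer lattice point $p \in \zz^d$. Then for every $t \in \nn$,
$$
\langle a_i, \. p + t\ts u\rangle \. = \. \langle a_i, p\rangle + t\ts \langle a_i, u\rangle \. \ge \. \langle a_i, p\rangle \. \ge \. -c_i\ts,
$$
so $p + t\ts u \in P \cap \zz^d$. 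As $u \ne 0$, the points $\{p + t\ts u : t \in \nn\}$ are pairwise distinct, giving infinitely many integer lattice points in $P$. This contradicts the assumption that $P$ contains only finitely many such points, completing the proof.

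The only non-trivial ingredient is the equivalence ``unbounded iff non-trivial recession cone'' together with the rationality of $\rc(P)$; both are standard and mild, so I do not expect any real obstacle. The argument is essentially a one-paragraph application of the fact that rational polyhedra have rational recession cones and that every non-zero rational cone meets $\zz^d \smallsetminus \{0\}$.
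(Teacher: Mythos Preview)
Your proof is correct and follows essentially the same approach as the paper's: both argue by contradiction via the recession cone (the paper calls it the ``base cone'' $C_P$), using rationality of the cone to extract a nonzero integer direction along which one can slide an integer point of $P$ to produce infinitely many lattice points. Your write-up is slightly more explicit about the translation $p+tu$ and the verification that it stays in $P$, but the underlying idea is identical.
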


\begin{proof}  Suppose~$P$ is not bounded.  Without loss of generality,
assume that at the origin $O \in P$.  Consider the \emph{base cone}
$C_P$ of all infinite rays in $P$ starting at~$O$
(see e.g.~\cite[$\S$25.5]{Pak}). Since $P$ is a rational polyhedron,
the cone~$C_P$ is also rational and contains at least one ray of
rational slope. This ray contains an integer point and has a
rational slope, and therefore contains infinitely many integer
points, a contradiction. \end{proof}

\subsection{Proof of Lemma~\ref{D4}}
Let $S$ be a subset of $\{1,\ldots,r\}$, and let $P_S$ be the set of all points
${(v,n)\in\mathbb{R}^{d+1}}$ satisfying $\alpha_i(v,n),\beta_i(v,n)\geq0$ for
$i\not\in S$, satisfying $\alpha_i(v,n)=-1$ and $\beta_i(v,n)=0$ for $i\in S$,
and satisfying $n\geq 0$. Let $|v|$ denote $\max_i |v_i|$.

Note that we have $2^r$ polytopes $P_S$, and the integer lattice points in $P_S$
form a cover for the set of all $(v,n)\in\mathbb{Z}^{d+1}$ which contribute
a positive amount to~$f(n)$. For each $P_S$, we prove that there exists
a constant $c$ such that any integer lattice point in $P_S$, with $n\geq 1$
satisfies $|v_i|\leq cn$. Since $f$ is finite, we know that $P_S$ contains
finitely many integer lattice points for any fixed value of $n$.

We can assume that there are two distinct values $n_1<n_2$, such that there
exist integer lattice points in $P_S$ with $n=n_1$ and with $n=n_2$, since
otherwise $P_S$ only contains finitely many lattice points.
Let $(v_1,n_1)$ be an integer point in $P_S$. Consider the set of all points
in~$P_S$ satisfying $n=n_2$. This is a not necessarily bounded polytope with
a positive finite number of integer lattice points, so by Lemma~\ref{bnd},
it is bounded. Thus, there exists a $c_S$, such that $|v_2-v_1|<c_S$
for all $(v_2,n_2)$ in~$P_S$.

This implies that $|v-v_1|<c_S(n-n_1)$ for all $(v,n)$ in $P_S$ with $n>n_2$.
Indeed, otherwise the line segment connecting $(v_1,n_1)$ to $(v,n)$
would intersect the hyperplane $n=n_2$ at a point $(v_2,n_2)$ in~$P_S$
but not satisfying $|v_2-v_1|<c_S$.

Take a $c^\prime_S$ such that $c^\prime_S>c$, and all finitely many
integer lattice points $(v,n)$ in $P_S$ with $1\leq n<n_2$, including
$(v_1,n_1)$, satisfy $|v|\leq c^\prime_Sn$. Then, all integer lattice
points $(v,n)$ in $P_S$ with $n\geq 1,$ satisfy $|v|\leq c^\prime_Sn$.
Taking $\ts c = \max_S \{c^\prime_S\}$, proves the result.
\ $\sq$

\subsection{Proof of Lemma~\ref{D5}}
Let $f(n):\mathbb{N}\rightarrow\mathbb{N}$ be a function such that
for all $n\geq 1$,
\begin{displaymath}f(n)= \sum_{v\in\mathbb{Z}^{d},|v_i|\leq cn}
\prod_{i=1}^r{\alpha_i(v,n)\choose\beta_i(v,n)}\,,
\end{displaymath}
where $\alpha_i$ and $\beta_i$ are integer coefficient affine
functions of $v$ and~$n$.  We construct a function $g$ in $d+2r+1$ variables
$x_1,\ldots,x_d,a_1,\ldots a_r, b_1,\ldots,b_r,y$.
Let
$$
G(x_1,\ldots,x_d,a_1,\ldots a_r, b_1,\ldots,b_r, y)\, = \, \Pi_1\cdot \Pi_2\cdot \Pi_3\cdot\Pi_4\.,
$$
where
$$
\Pi_1=\prod_{j=1}^d\. \frac{1}{1-x_jh_j}\,, \qquad \Pi_2=\prod_{j=1}^d\. \frac{1}{1-x_jh_j^\prime}\,,
$$
$$
\Pi_3=\prod_{i=1}^r \left(1+a_i\.\frac{a_i+b_ia_i}{1-(a_i+b_ia_i)}\right)\,,
\quad \text{and} \quad \Pi_4=yq\.\frac{1}{1-yq^\prime}\,.
$$
The terms $h_j$, $h_j^\prime$, $q$, and $q^\prime$ are monomials
in variables~$a_i$ and $b_i$, to be determined later.

We consider the coefficients of terms in which the exponent on
each $x_j$ variable is $2cn$. The $\Pi_1$ part will contribute some
number of these $x_j$ factors, and the $\Pi_2$
will contribute the rest. This choice will represent the variable $v_j$.
We will use $cn+v_j$ to denote the number of factors of $x_j$ coming
from~$\Pi_1$, so then $cn-v_j$ will be the number of factors of $x_j$
coming from $\Pi_2$. Note that $v_j$ can be any integer between $-cn$
and~$cn$.

Define all the monomials in such a way that the $a_i$ monomial
needs to be repeated $\alpha_i(v,n)+1$ times in the $\Pi_3$ term, while
that $b_i$ monomial needs to be repeated $\beta_i(v,n)$ times in the $\Pi_3$ term.
By the definition in~$\S$\ref{ss:def-binom}, this is exactly $\alpha_i\choose\beta_i$.

We choose the monomials $h_j$, $h_j^\prime$, $q$, and $q^\prime$ a follows.
Let
$$
\beta_i(v,n)=\beta_{i,0}+\beta_{i,1}v_1+\ldots+\beta_{i,d}v_d+\beta_{i,d+1}n.
$$
First, consider the case where $\beta_{i,j}\leq 0$, for all $0\leq j\leq d$.
In this case, we put $b_i$ in $h_j$ with multiplicity $|\beta_{i,j}|$,
put $\beta_i$ in $q$ with multiplicity $|\beta_{0,j}|$, and not put any
$b_i$ terms in $h_j^\prime$ or $q^\prime$. This implies that outside
of the $\Pi_3$ term, the number of times $b_i$ appears is exactly
$$
-\beta_{i,0}\ts + \ts \sum_{j=1}^d\ts (cn+v_j)(-\beta_{i,j}) \. = \.
n\left(-\beta_{i,d+1} \ts -\ts \sum_{j=1}^dc\ts \beta_{i,j}\right)-\ts\beta(v,n)\ts.
$$
Therefore, for the coefficient of a term with total multiplicity \.
$n\left(-\beta_{i,d+1}-\sum_{j=1}^dc\beta_{i,j}\right)$ \.
of~$b_i$, we have $\beta(v,n)$ of the $b_i$ terms must
come from the $\Pi_3$ term.

We only consider coefficients where the multiplicity of the $y$ term is~$n$.
If $\beta_{i,0}$ is positive, then we can swap the multiplicity of $b_i$ in $q$
and $q^\prime$. Since the $q^\prime$ term is necessarily repeated $n-1$ times
in the $\Pi_4$ term, this implies that outside of the $\Pi_3$ term, $b_i$
appears exactly
$$
(n-1)\beta_{i,0}\ts +\ts \sum_{j=1}^d(cn+v_j)(-\beta_{i,j}) \. = \.
n\left(\beta_{i,0}-\beta_{i,d+1}\ts -\ts \sum_{j=1}^dc\beta_{i,j}\right)-\ts \beta(v,n)
$$
times. Therefore, for the coefficient of a term with
$n\left(\beta_{i,0}-\beta_{i,d+1}-\sum_{j=1}^dc\beta_{i,j}\right)$
total multiplicity of $b_i$, we again have $\beta(v,n)$ of the $b_i$ terms must
come from the $\Pi_3$ term.

If any of the $\beta_{i,j}$ terms, with $1\leq j\leq d$ are actually positive,
we swap the multiplicity of $b_i$ in $h_j$ and $h_j^\prime$, which gives the
same analysis with $v_j$ negated. Using the same method, we can require that
the number of $a_i$ terms coming from the $\Pi_3$ term is $\alpha_i+1$,
for all~$i$.

In summary,
$$ f(n) \, = \,
\bigl[x_1^{nc_{1}}\ldots x_d^{nc_{d}}a_1^{nc_{d+1}}\ldots a_r^{nc_{d+r}}
b_1^{nc_{d+r+1}}\ldots b_r^{nc_{d+2r}} y^{nc_{d+2r+1}}\bigr]\.G\ts.
$$
Take $c^\prime$ to be a common multiple of all~$c_i$ and make a
substitution $x_i \gets x_i^{c'/c_i}$, $a_j\gets a_j^{c'/c_{d+j}}$, etc.
This gives a desired quasi-diagonal.
\ $\sq$

\bigskip

\section{Proof of Theorem~\ref{t:mainR}}\label{s:alt}

\subsection{Preliminaries}
We start with the following simple result:

\begin{lemma}\label{boundR}
Let $f\in \cF$ be a tile counting function. Then $f(n)\le C^{n}$, for all $n\in\mathbb{P}$
for some $C>0$.
\end{lemma}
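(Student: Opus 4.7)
The plan is to bound the number of tilings by a straightforward counting argument using the positive area of each tile. The only subtlety to be aware of is that tiles may be irrational, but this does not affect the argument since we only use a lower bound on areas.

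First, I would let $\mu = \min_{\tau \in T} |\tau| > 0$, the minimum area of a tile in $T$. Since every tile has height $1$ and positive width, $\mu$ is well-defined and strictly positive. In any tiling of $\Rven$, all the tiles are pairwise non-overlapping and their total area equals $n+\ve$, so the number of tiles used in any single tiling is at most $(n+\ve)/\mu$.

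Next, I would observe that since every tile in $T$ has height exactly $1$ and the region $\Rven$ also has height $1$, the tiles in any tiling are aligned horizontally and can be listed from left to right. Thus a tiling is uniquely determined by the ordered sequence of tiles it uses. Writing $r = |T|$, the number of ordered sequences of length at most $L = \lfloor (n+\ve)/\mu \rfloor$ from the set $T$ is at most
\[
\sum_{k=0}^{L} r^{k} \, \le \, (L+1)\, r^{L}\,.
\]
This immediately yields
\[
f(n)\,=\,\CT(\Rven)\,\le\,(L+1)\,r^{L}\,\le\,C^{n}
\]
for a sufficiently large constant $C>0$ depending only on $r$, $\mu$ and $\ve$, proving the lemma.

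Essentially no step is a serious obstacle here; the only thing to double-check is that $\mu>0$, which follows from the fact that $T$ is finite and every tile has positive area. One could alternatively phrase the proof via the graph $\scg_T$ from Lemma~\ref{lemGraph}: since every edge in $\scg_T$ has positive weight bounded below by $\mu$, a path of total weight $n+\ve$ has at most $(n+\ve)/\mu$ edges, and the number of such paths is bounded by $|E(\scg_T)|^{(n+\ve)/\mu}$, giving the same exponential bound.
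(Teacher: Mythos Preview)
Your proof is correct and essentially identical to the paper's: both take $\mu=\min_{\tau\in T}|\tau|$, observe that a tiling of $\Rven$ is determined by its left-to-right sequence of at most $(n+\ve)/\mu$ tiles, and bound the number of such sequences exponentially (the paper uses the slightly slicker bound $(s+1)^{(n+\ve)/\mu}$ with $s=|T|$, but this is cosmetic). The alternative phrasing via paths in $\scg_T$ that you mention at the end is also perfectly fine and amounts to the same argument.
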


\begin{proof}
Let $T=\{\tau_1,\ldots,\tau_s\}$, and let $\mu=\min_i |\tau_i|$
be the minimum area of a tile in~$T$. Every tiling of $\Rven$ with~$T$
corresponds to a unique sequence of tiles in the tilings,
listed from left to right.
The length of this sequence is at most $(n+\varepsilon)/m$.
Therefore, $f_T(n) \le (s+1)^{(n+\varepsilon)/\mu} = e^{O(n)}$.
\end{proof}

Theorem~\ref{t:mainR} shows that this upper bound is usually tight,
and every function growing slower than this must be eventually quasi-polynomial.
The following lemma is a special case of Theorem~$1.1$ in~\cite{CLS};

\begin{lemma}[\cite{CLS}]\label{l:ehr}
Let $g(n)$ be the number of integer points $(x_1,\ldots,x_{r},n)\in\mathbb{Z}^{r+1}$
satisfying $m$ inequalities $a_i(x,n)>c_i$ where $a_i$ is
an integer coefficient linear function and $c_i$ is an integer
for all $i=1,\ldots,m$.  Then $g(n)$ is eventually quasi-polynomial.
\end{lemma}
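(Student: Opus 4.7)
The plan is to read off $g(n)$ as the count of integer points in a slice of a fixed rational polyhedron, and then apply the machinery of parametric Ehrhart theory. First, since $x$, $n$, $a_i$, and $c_i$ are all integer-valued, the strict inequality $a_i(x, n) > c_i$ is equivalent on the lattice to the closed inequality $a_i(x, n) \geq c_i + 1$. Hence $g(n) = |P_n \cap \mathbb{Z}^r|$, where $P_n \subset \mathbb{R}^r$ is the $n$-slice of the rational polyhedron
$$P \, = \, \bigl\{(x, n) \in \mathbb{R}^{r+1} \, : \, a_i(x, n) \geq c_i + 1, \ i = 1, \ldots, m\bigr\}.$$
The hypothesis that $g(n)$ is finite (otherwise the statement is vacuous) means that each slice $P_n$ is a bounded rational polytope whose vertices depend piecewise linearly on $n$.

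Next, I would pass to generating functions. By Brion's theorem, or directly via a signed decomposition of $P$ into simplicial tangent cones at its vertices, the two-variable series
$$\Phi(z, t) \, = \, \sum_{(x, n) \in P \cap \mathbb{Z}^{r+1}} z_1^{x_1} \cdots z_r^{x_r}\, t^n$$
is a rational function of $(z_1, \ldots, z_r, t)$, expressible as a signed sum of terms $z^a t^b / \prod_j (1 - z^{\alpha_j} t^{\beta_j})$ with $(\alpha_j, \beta_j) \in \mathbb{Z}^{r+1}$. After regularizing the specialization $z \to \mathbf{1}$ (via a generic perturbation $z = e^{s \xi}$ and $s \to 0$ in Barvinok's style, using that the poles at $z = \mathbf{1}$ cancel because each $P_n$ is bounded), one obtains a rational one-variable generating function
$$G(t) \, = \, \sum_{n \geq 0} g(n)\, t^n \, = \, \frac{N(t)}{\prod_{i} (1 - t^{d_i})},$$
with positive integer exponents $d_i$. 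A standard partial-fraction argument (see e.g.~\cite[Ch.~4]{Sta}) then shows the coefficients of such a series are eventually quasi-polynomial, giving the lemma.

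The main obstacle is justifying the specialization $z \to \mathbf{1}$: individual vertex-cone contributions to $\Phi$ are singular there, so one must argue that the singularities cancel in the Brion sum. This is equivalent to showing that the dependence of $|P_n \cap \mathbb{Z}^r|$ on $n$ is captured by a single ``chamber'' for $n$ large. I would address this either through Barvinok's explicit regularization, or, more cleanly, by invoking the chamber-decomposition theorem for parametric polytopes: the number of integer points in $P_n$ is a piecewise quasi-polynomial function of $n$, with pieces given by rational polyhedra in the one-dimensional parameter space. Since there are only finitely many such pieces and the parameter $n$ ranges over $\mathbb{N}$, eventually a single piece governs $g(n)$, so $g(n)$ is quasi-polynomial for $n$ large. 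This is precisely the content of \cite[Theorem~1.1]{CLS}, from which the lemma is deduced in a single step.
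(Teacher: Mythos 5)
Your proposal is correct and ends up taking essentially the same route as the paper: the paper offers no independent argument for this lemma, deducing it in one step as a special case of Theorem~1.1 of~\cite{CLS}, which is exactly your final reduction after converting the strict inequalities $a_i(x,n)>c_i$ to $a_i(x,n)\ge c_i+1$ on the lattice and viewing $g(n)$ as the lattice-point count in the $n$-slices of a fixed rational polyhedron. Your Brion/chamber-decomposition sketch is a reasonable outline of the machinery behind that citation, but it is not part of the paper's proof and is not needed once \cite[Theorem~1.1]{CLS} is invoked.
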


The proof of the lemma uses a generalization of \emph{Ehrhart polynomials}.
We refer to~\cite{Bar} for a review of the area and further references.

\medskip

\subsection{Proof setup}
From Main Theorem~\ref{t:main-countR}, function~$f$
can be expressed as \begin{displaymath}
(\circledast) \qquad
f(n)\, = \, \sum_{v\in\mathbb{Z}^{d}}\, \prod_{i=1}^r \. {\alpha_i(v,n)\choose\beta_i(v,n)}\ts,
\end{displaymath}
where each $\alpha_i$ and $\beta_i$ is an integer coefficient
affine function of $v$ and~$n$.

From Lemma~\ref{boundR}, function $f \le e^{cn}$ for some~$c$.
Therefore, it suffice to show that $f$ is either
greater than $e^{cn}$ for some $c>0$, or is eventually polynomial.
Furthermore, it suffices to show that $f$ is either greater
than $e^{cn}$ for some~$c$, or eventually quasi-polynomial.
We can decompose $f$ into even more functions, by multiplying $p$
by the periods of all of the quasi-polynomials, which proves that
each component function that does not grow exponentially
is eventually polynomial.

Denote by $k$ the number of indices $i$ in~$(\circledast)$, such
that $\alpha_i$, $\beta_i$, and $\gamma_i=\alpha_i-\beta_i$ are
three non-constant functions.  We use induction on~$k$.

\medskip

\subsection{Step of Induction}
Let $M$ be a constant integer satisfying $\ts M>|\beta_i(0,0)|,|\gamma_i(0,0)|$,
for all $i$. We decompose $f$ into a sum of $(M+2)^{2r}$
functions depending on the values of $\beta_i$ and $\gamma_i$, for all~$i$.
For each $\beta_i$ and $\gamma_i$, we either require that
$\beta_i\geq M$ or that the value of $\beta_i$ be some constant~$<M$.
There are $M+2$ possibilities for each function, since only values
$\ge -1$ give non-zero binomial coefficients, giving $(M+2)^{2r}$
bound as above.

To ensure that $\beta_i=z\geq-1$ for some constant~$z$,
we replace $\beta_i(v,n)$ by $z$, and multiply the binomial coefficients
${\beta_i(v,n)+1\choose z+1}$ and ${z+1\choose \beta_i(v,n)+1}$ to the
existing product. This works because
${\beta_i(v,n)+1\choose z+1}{z+1\choose \beta_i(v,n)+1}$ is~1
if $\gamma_i=z$, and 0 otherwise.

Similarly, to ensure that $\gamma_i=z\geq-1$,
we replace $\alpha_i(v,n)$ with $\beta_i+z$, and multiply
the binomial coefficients ${\gamma_i(v,n)+1\choose z+1}$ and
${z+1\choose \gamma_i(v,n)+1}$ to the existing product.

Finally, to ensure that $\beta_i\geq M$, we multiply
the binomial coefficient ${\beta_i-M-1\choose 0}$ to the existing product.
This binomial coefficient is 1 if $\beta_i-M-1\geq-1$, and 0 otherwise.
Similarly for enforcing that $\gamma_i\geq M$.

Note that when we require that $\beta_i$ or $\gamma_i$ equal
to a constant, we reduce $k$ by 1, and when we specify that
$\beta_i\geq M$ or $\gamma_i\geq M$, we keep $k$ the same.
Therefore of these $(M+2)^{2r}$ functions which add to $g$,
we get by induction that all but one of them is quasi-polynomial.
The only one we have to worry about is the function $g$
in which each $\beta_i$ and $\gamma_i$ is specified
to be at least $\mathcal{M}$, and we show that this function is identically~0.

Assume by way of contradiction that $g$ is not identically~0.
Then, there exists some $(v_1,n_1)$ such that
\begin{displaymath}
\prod_{i=1}^r{\alpha_i(v_1,n_1)\choose\beta_i(v_1,n_1)}>0, \ \
\beta_i(v_1,n_1)-\beta_i(0,0)>0, \ \ \text{and} \ \ \gamma_i(v_1,n_1)-\gamma_i(0,0) >0,
\end{displaymath}
for all~$i$.  Adding $(v_1,n_1)$ to any point $(v,n)$ would
increase every $\beta_i(v,n)$ and $\gamma_i(v,n)$ by at least~1.
Consider the sequence of points $(v_t,n_t)=(tv_1,tn_1)$,
where~$t$ is a positive integer. Note that every $\beta_i(v_t,n_t)$
and every $\gamma_i(v_t,n_t)$ is at least $t$. Therefore,
${\alpha_1(v_t,n_t)\choose\beta_1(v_t,n_t)}\geq{2t\choose t}$,
so $f(n_1t)\geq {2t\choose t}\geq 2^t$, contradicting the fact
that $f(n)<e^{cn}$ for all positive~$c$.

\medskip

\subsection{Base of Induction:}
Now consider the case $k=0$.  In notation of~$(\circledast)$, this means
\begin{displaymath}
f(n)\, =\, \sum_{v\in\mathbb{Z}^{d}}\prod_{i=1}^r\. {\alpha_i(v,n)\choose\beta_i(v,n)}\ts,
\end{displaymath}
where for each~$i$, at least one of $\alpha_i$, $\beta_i$, and $\gamma_i$
is constant. Without loss of generality, we can assume that either
$\alpha_i$ or $\beta_i$ are constant.

When $\alpha_i$ is a nonzero constant, then we can write $f$
as a sum of functions where we condition on the value of $\beta_i$
to be~$z$, by replacing $\beta_i$ with $z$ and multiplying
the existing product by ${0\choose \beta_i(v,n)-z}$.
Since $\alpha_i\choose z$ is a constant, we can again express
our functions as a sum of $\alpha_i\choose z$ copies of that
function with the $\alpha_i\choose z$ term removed.
Therefore, we can assume that if $\alpha_i$ is constant,
that constant is~0.

We can also replace every $0\choose\beta_i(v,n)$
with ${\beta_i(v,n)-1\choose 0}{-\beta_i(v,n)-1\choose 0}$,
since we are replacing the indicator that
$\beta_i=0$ with the indicators $\beta_i-1\geq-1$ and
$-\beta_i-1\geq -1$. Therefore, we may assume that every
$\beta_i$ is a constant.

In summary,
\begin{displaymath}
f(n)\, = \, \sum_{v\in\mathbb{Z}^{d}}\, \prod_{i=1}^{r_1}\. {\alpha_i(v,n)\choose 0}\.
\prod_{i=r_1+1}^{r}\. {\alpha_i(v,n)\choose z_i}\ts,
\end{displaymath}
where each $\alpha_i$ and $\beta_i$ is an integer coefficient
affine function of $v$ and~$n$, and each $z_i\in \pp$.

Note that each ${\alpha_i(v,n)\choose 0}$ term is just an indicator
function that $\alpha_i(v,n)\geq -1$. Therefore,
\begin{displaymath}
f(n)\,=\sum_{v\in P_n}\prod_{i=r_1+1}^{r_2}{\alpha_i(v,n)\choose z_i},
\end{displaymath}
where $P_n$ is the polytope of all integer points such that
$\alpha_i(v,n)\geq -1$ for all $1\leq i\leq d_1$.

Also, note that ${\alpha_i(v,n)\choose z_i}$ is equal to the
number of integer points $(x_1,\ldots,x_{z_i})$, such that
$$
{0\leq x_1<x_2<\ldots<x_{z_i}<\alpha_i(v,n).}
$$
Therefore, $f(n)$ is equal to the number of points in the polytope
$$
P_n\times\mathbb{Z}^{z_{r_1+1}}\times\ldots\times\mathbb{Z}^{z_r}\ts,
$$
where $v$ is the point in $P_n$, and the coordinates
$(x_1,\ldots,x_{z_i})$ satisfying
$$
0\leq x_1<x_2<\ldots<x_{z_i}<\alpha_i(v,n)\ts.
$$
By Lemma~\ref{l:ehr}, $f(n)$ is eventually quasi-polynomial.
This proves the base of induction, and completes the proof
of Theorem~\ref{t:mainR}.

\bigskip

\section{Proofs of applications}\label{s:tech-apps}

\subsection{Proof of Theorem~\ref{t:main-cbb}} \label{ss:proof-balanced}
First, we show that $\mathcal{B}^\prime\subseteq \mathcal{B}$. Since $\mathcal{B}$ 
is closed under addition, it suffices to show that every balanced multisum 
$$
g(n)\,=\,\sum_{v\in\mathbb{Z}^d}\.\prod_{i=1}^r\frac{\alpha_i(v,n)!}{\beta_i(v,n)!\ts\ga_i(v,n)!}
$$ 
is in $\mathcal{B}$. This follows since
$$\frac{\alpha_i(v,n)!}{\beta_i(v,n)!\ts\ga_i(v,n)!} \,=\,
{\alpha_i(v,n)\choose\beta_i(v,n)}{\alpha_i(v,n)-1\choose 0}.
$$ 
The second factor ensures that $\alpha_i\geq 0$, so the first factor is never~$-1\choose 0$.

To show that $\mathcal{B}\subseteq \mathcal{B}^\prime$, take
$$
g(n)\,=\, \sum_{v\in\mathbb{Z}^d}\.\prod_{i=1}^r{\alpha_i(v,n)\choose\beta_i(v,n)}.
$$ 
Denote by~$S$ the set of all subsets of $\{1,\ldots,r\}$, and $\ga_i=\al_i-\be_i$. 
For each $s\in S$, let 
$$g_s(n)\, =\, \sum_{v\in\mathbb{Z}^d}\. e_s(v,n) \cdot \prod_{i\in s}\frac{\alpha_i(v,n)!}{\beta_i(v,n)!\ts\ga_i(v,n))!}\,, \qquad \text{where}
$$
$$e_s(v,n) \, = \, \prod_{i\not \in s}\,\left[\frac{0!}{(\alpha_i(v,n)+1)!(-\alpha_i(v,n)-1)!}\right]\ts 
\left[\frac{0!}{\beta_i(v,n)!(-\beta_i(v,n))!}\right].
$$
Observe that $e_s(v,n) = 1$ if $\alpha_i(v,n)=-1$ and $\beta_i(v,n)=0$, and $e_s(v,n) = 0$ otherwise.  

For every $v$, let $s_v$ be the set of indices $i\in\{1,\ldots,r\}$ for which $\alpha_i(v,n),\beta_i(v,n)\geq 0$. 
Then
$$
e_s(v,n) \cdot \prod_{i\in s}{\frac{\alpha_i(v,n)!}{\beta_i(v,n)!\ts \ga_i(v,n)!}} \, = \, {\alpha_i(v,n)\choose\beta_i(v,n)}
\quad \text{when \ $s=s_v$\.,}
$$
and $0$ otherwise.  Therefore,
$$g(n)\, = \, \sum_{s\in S}\. g_s(n)\.. 
$$ 
This implies that $g\in \mathcal{B}^\prime$, and completes the proof. \ $\sq$

\subsection{Getting close to Catalan numbers}
Before we prove Proposition~\ref{cat4}, we need the following weaker result.
\begin{lemma}\label{l:cat3}
There exists a tile counting function $f$ such that
$$
f(n) \, \sim \, \frac{3\sqrt{3}}{\pi}\. C_n\,, \ \quad \ \text{as} \quad \,\, n\to \infty\..
$$
\end{lemma}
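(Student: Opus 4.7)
By Main Theorem~\ref{t:main-countR} it suffices to construct a binomial multisum $f\in\cB$ (equivalently, a diagonal of an $\nn$-rational GF) with $f(n)\sim \tfrac{3\sqrt{3}}{\pi}\,C_n$. Since $C_n\sim \tfrac{1}{\sqrt{\pi}}\cdot 4^n/n^{3/2}$, the target asymptotic rewrites as
\[
f(n)\,\sim\,\frac{3\sqrt{3}}{\pi^{3/2}}\cdot\frac{4^n}{n^{3/2}}.
\]

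The guiding observation is that the transcendental constant $3\sqrt{3}$ arises naturally from the discriminant of the Motzkin generating function. Writing $M(x)=(1-x-\sqrt{(1-3x)(1+x)})/(2x^2)=\sum_{n\ge 0}M_n x^n$, one computes near the dominant singularity $x=1/3$ the expansion $M(x)=3-3\sqrt{3}\sqrt{1-3x}+O(1-3x)$, and the Flajolet--Odlyzko transfer theorem then yields $M_n\sim \tfrac{3\sqrt{3}}{2\sqrt{\pi}}\cdot 3^n/n^{3/2}$. The plan is to reproduce this Motzkin-type singular profile, but transported to $x=1/4$ so that the growth rate becomes $4^n$, and suitably rescaled so that the singular coefficient becomes $\tfrac{6\sqrt{3}}{\pi}$: that is, one engineers $f$ so that the univariate GF $\sum_n f(n)x^n = (\text{analytic at } 1/4) - \tfrac{6\sqrt{3}}{\pi}\sqrt{1-4x} + O(1-4x)$ near $x=1/4$. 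The transfer theorem then produces $f(n)\sim \tfrac{6\sqrt{3}}{\pi}\cdot\tfrac{4^n}{2\sqrt{\pi}\,n^{3/2}} = \tfrac{3\sqrt{3}}{\pi^{3/2}}\cdot 4^n/n^{3/2}$, as required.

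Concretely, I would search for a bivariate $\nn$-rational $G(x,y)\in\cR_2$, of walk-generating-function type, such that the algebraic equation governing its univariate diagonal $D(x) = \sum_n ([x^n y^n]G)\,x^n$ has discriminant factoring as $(1-4x)(1+\gamma x)$ for a rational $\gamma$ chosen so that the leading square-root coefficient in the expansion of $D(x)$ at $x=1/4$ equals $-\tfrac{6\sqrt{3}}{\pi}$. The main obstacle is matching this constant \emph{exactly}: the factor $\sqrt{3}$ is essentially forced by the Motzkin-style discriminant factorization at $x=1/4$, while the $1/\pi$ (rather than the more common $1/\sqrt{\pi}$) must arise from the two-dimensional saddle-point integration inherent in the diagonal operation (as in Pemantle--Wilson). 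Both constraints must be satisfied simultaneously by a single $\nn$-rational $G$, which is the delicate design step. Once a suitable $G$ is identified and its singular expansion is verified by standard multivariate singularity analysis, Main Theorem~\ref{t:main-countR} promotes $f$ to a tile counting function and the lemma follows.
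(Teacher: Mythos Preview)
Your proposal is not a proof but a plan, and the plan has a genuine obstruction. You propose to find a \emph{bivariate} $\nn$-rational $G\in\cR_2$ whose diagonal $D(x)=\sum_n f(n)x^n$ has a square-root singularity at $x=1/4$ with leading singular coefficient $\tfrac{6\sqrt{3}}{\pi}$. But by Furstenberg's theorem the diagonal of a bivariate rational function over~$\qqq$ is algebraic over~$\qqq(x)$, so its Puiseux expansion at the algebraic point $x=1/4$ has \emph{algebraic} coefficients. The number $\tfrac{6\sqrt{3}}{\pi}$ is transcendental, hence no such $G\in\cR_2$ exists. Your remark that the $1/\pi$ ``must arise from the two-dimensional saddle-point integration'' is a red herring in two variables: the Pemantle--Wilson analysis of a bivariate rational diagonal simply recovers the coefficient asymptotics of the algebraic $D$, and those constants are algebraic multiples of $1/\sqrt{\pi}$. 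To manufacture a genuine $\pi^{-3/2}$ (equivalently, $1/\pi$ after dividing by the $1/\sqrt{\pi}$ in $C_n$) one needs at least three variables.

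The paper's proof does precisely this, and in a completely explicit way. The key observation is that $\binom{2m}{m}^3\sim 4^{3m}/(\pi m)^{3/2}$ already carries the factor $\pi^{-3/2}$, and setting $m=n/3$ simultaneously turns $64^m$ into $4^n$ and produces the prefactor $(3)^{3/2}=3\sqrt{3}$. Concretely, the paper takes
\[
f_1(n)\,=\,\sum_{v\in\zz}\binom{n}{3v}\binom{3v}{n}\binom{2v}{v}^3,
\]
which is $\binom{2n/3}{n/3}^3$ when $3\mid n$ and $0$ otherwise, and then adds shifted copies $f_2(n)=4\,f_1(n-1)$ and $f_3(n)=16\,f_1(n-2)$ to cover the residues $n\equiv 1,2\pmod 3$. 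Each $f_i\in\cB$, closure under addition gives $f=f_1+f_2+f_3\in\cB=\cF$, and a direct Stirling estimate yields $f(n)\sim\tfrac{3\sqrt{3}}{\pi}\,C_n$. There is no search and no singularity analysis needed; the construction is a one-line binomial multisum.
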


\begin{proof} Consider the following three binomial multisums $f_1,f_2,f_3 \in \cB$~:
$$f_1(n)\, = \, \sum_{v\in\mathbb{Z}} {n\choose 3v}{3v\choose n}{2v\choose v}^3, \quad
f_2(n)\, =\, 4 \. \sum_{v\in\mathbb{Z}} {n-1\choose 3v}{3v\choose n-1}{2v\choose v}^3,
$$
$$
\text{and} \quad \ f_3(n)\, = \,16\. \sum_{v\in\mathbb{Z}} {n-2\choose 3v}{3v\choose n-2}{2v\choose v}^3.
$$
Let $f=f_1+f_2+f_3$. By Main Theorem~\ref{t:main-countR} and Corollary~\ref{c:add-mult}, we know that $f\in \cF$.

Observe that $f_1(n)\ne 0$ only when $n$ is a multiple of~3.  We have:
$$
f_1(n)\, = \, {2n/3\choose n/3}^3 \, \sim \, \left(\frac{4^{n/3}}{\sqrt{n/3}\sqrt{\pi}}\right)^3
\,\sim \, \frac{3\sqrt{3}}{\pi}\,C_n\.,\quad \mbox{ for } \ 3|n.
$$
Analyzing $f_2$ and $f_3$ gives the same result when $n=1,2$~mod~$3$, respectively.
\end{proof}

\subsection{Proof of Proposition~\ref{cat4}}
Let $f\in \cF$ be the tile counting function from Lemma~\ref{l:cat3}.
For each $i\in\mathbb{N}$, let $g_i(n)=f(n-i)$ if $n\geq i$, and let
$g_i(n)=0$ otherwise. Each $g_i$ is also a tile counting function,
since we can take the exact same tile set,
and replace $\varepsilon$ with $\varepsilon+i$.

Denote $\xi = 3\sqrt{3}/\pi$.
Note that $g_i(n)\sim f(n)/4^i\sim C_n\ts \xi/4^i$.
Given any $\varepsilon>0$, we can take $i$ large enough so that
$\xi/4^i<\varepsilon$, and $m\in \pp$ such that
$1-\ep <m\ts \xi/4^i<1+\ep$.
This gives \ts  $m\ts g_i(n) \sim C_n \ts \xi \ts m/4^i$
which is between $1-\ep$ and $1+\ep$, as desired.  Finally,
we have $m\ts g_i \in \cF$ since $\cB = \mathcal{F}$ is closed
under addition. \ $\sq$

\subsection{Proof of Proposition~\ref{cat6}}
Given an $m\geq 1$, let
$$
f(n)={2n \choose n}+(m-1){2n\choose n+1}.
$$
Note that $f$ is a tile counting function, since it is a finite sum
of binomial coefficients of affine functions of $n$. Since
$C_n={2n\choose n}-{2n\choose n+1}$, we have that $f(n)$ and $C_n$
differ by $m{2n\choose n}$, and are therefore congruent modulo~$m$.
 \ $\sq$

\subsection{Proof of Proposition~\ref{cat7}}
Given a prime $p\ge 2$, let
$$
f(n)={2n \choose n}+(p^{2n}-1){2n\choose n+1}.
$$
By Corollary~\ref{c:posapp}, $p^{2n}-1 \in \cB$, and
binomial coefficients are in $\cB$ by definition.  Since
$\cB$ is closed under addition and multiplication,
we obtain $f\in \cB$.
Note that $p^{2n}>C_n$, so adding or subtracting an integer multiple
of $p^{2n}$ to $C_n$ does not change the order of $p$. Therefore,
$$
\text{ord}_p(C_n)\. = \. \text{ord}_p\left(C_n+p^{2n}{2n\choose n+1}\right)
\. = \. \text{ord}_p(f(n))\.,
$$
as desired. \ $\sq$

\subsection{Proof of Theorem~\ref{t:hypo}}
We start with the case where $k=1$ and $\ell=2$. Let $r=r_1/r_2$,
$c=\mu_1^{\mu_1}r_2$, and let
$$
f(n)=\sum_{\ell=0}^n{\mu_1 \ell\choose \nu_1\ell}c^{n-\ell}(\nu_1^{\nu_1}\nu_2^{\nu_2}r_1)^\ell
\. = \. \sum_{\ell\in\mathbb{Z}}
{\ell-1\choose 0}{n-\ell-1\choose 0}
{\mu_1 \ell\choose \nu_1\ell}c^{n-\ell}(\nu_1^{\nu_1}\nu_2^{\nu_2}r_1)^\ell\ts.
$$
First, let us prove that $f$ is a tile counting function. Replace $c^{n-\ell}$ with
$$
\sum_{v_1,\ldots,v_{c}\in\mathbb{Z}}{n-\ell\choose v_1}{n-\ell-v_1\choose v_2}{n-\ell-v_1-v_2\choose v_3}
\ldots{n-\ell-v_1-v_2-\ldots- v_{c-1}\choose v_{c}}.
$$
We can ignore the fact that ${-1 \choose 0}=1$,
since if we take the least $i$ such that $n-\ell-v_1-v_2-\ldots- v_{i}=-1$,
we have ${n-\ell-v_1-v_2-\ldots- v_{i-1}\choose v_i}=0.$
We then make a similar replacement for $(\nu_1^{\nu_1}\nu_2^{\nu_2}r_1)^\ell$.
Therefore, $f\in\cF$ by the Main Theorem~\ref{t:main-countR}.

Letting
$$
g(\ell)={\mu_1 \ell\choose \nu_1\ell}c^{-\ell}(\nu_1^{\nu_1}\nu_2^{\nu_2}r_1)^\ell,
$$
we get
$$
f(n)\. = \. \sum_{\ell=0}^n \. g(\ell)\ts c^n\ts.
$$
Note that $g(0)=1$, and
$$
g(\ell+1)/g(\ell) \, = \,
\frac{\nu_1^{\nu_1}\nu_2^{\nu_2} \. r\. \prod_{i=1}^{\mu_1} (\mu_1\ell+i)}{\mu_1^{\mu_1}\.
\prod_{i=1}^{\nu_1} \left(\nu_1\ell+i\right) \.\ts
\prod_{i=1}^{\nu_2} \left(\nu_2\ell+i\right)} \, = \,
\frac{(\ell+a_1)(\ell+a_2)\ldots(\ell+a_p)\ts r}{(\ell+b_1)(\ell+b_2)\ldots(\ell+b_p)}.
$$
Therefore,
$$
f(n)/c^n\, = \, \sum_{\ell=0}^n\, \prod_{k=0}^{\ell-1}\frac{(k+a_1)(k+a_2)\ldots(k+a_p)\ts r}{(k+b_1)(k+b_2)\ldots(k+b_p)}
\, \to \, A \quad \text{as} \ \ \. n\to \infty\..
$$
In general, let each part $\mu_i$ be subdivided into $\ell_i$ parts
$\nu_{i,1},\ldots,\nu_{i,\ell_i}$. Let $r=r_1/r_2$, and let
$c=(\mu_1)^{\mu_1}\ldots(\mu_k)^{\mu_k}r_2$.
Similarly to in the previous case, we define $f$ as
$$
f(n)\. = \. \sum_{\ell=0}^n \. c^n \ts r_1^\ell\ts r_2^{-\ell}\.  \prod_{i=1}^p \ts g_i(\ell),
$$
where each
$$
g_i(\ell)={\mu_i\ell\choose \nu_{i,1}}{\mu_i\ell-\nu_{i,1}\ell\choose \nu_{i,2}\ell}
\ldots{\mu_i\ell-\nu_{i,1}\ell-\ldots-\nu_{i,\ell_i-1\ell}\choose\nu_{i,l_i}}
\left(\mu_i^{\mu_i}\right)^{-\ell}\left(\nu_{i,1}^{\nu_{i,1}}\ldots\nu_{i,\ell_i}^{\nu_{i,\ell_i}}\right)^\ell.
$$
This is a tile counting function for reasons similar to in the previous case.
Note that all the negative exponents are canceled out by the $c^n$ term.
We have:
$$
g_i(\ell+1)/g(\ell) \, = \,
\frac{\nu_{i,1}^{\nu_{i,1}}\ldots\nu_{i,\ell_i}^{\nu_{i,\ell_i}}
\ts\prod_{j=1}^{\mu_i} \left(\mu_i\ell+j\right)}
{\mu_i^{\mu_i}\prod_{j=1}^{\nu_{i,1}}\left(\nu_{i,1}\ell+j\right)\cdots
\prod_{j=1}^{\nu_{i,\ell_1}} \left(\nu_{i,\ell_i}\ell+j\right)}\,.
$$
Therefore,
$$
\frac{r_1^{\ell+1}\ts r_2^{-(\ell+1)}\. \prod_{i=1}^pg_i(\ell)}{r_1^\ell \ts  r_2^{-\ell} \.
\prod_{i=1}^pg_i(\ell)}=\frac{(\ell+a_1)(\ell+a_2)\. \cdots \.
(\ell+a_p)\ts r}{(\ell+b_1)(\ell+b_2)\ldots(\ell+b_p)}\,.
$$
Since
$$
r_1^0\. r_2^{0} \, \prod_{i=1}^p \. g_i(0) \, = \. 1\ts,
$$
we have
$$
f(n)/c^n=\sum_{\ell=0}^n\prod_{k=0}^{\ell-1}\frac{(k+a_1)(k+a_2)\ldots(k+a_p)\ts r}{(k+b_1)(k+b_2)\ldots(k+b_p)}
\, \to \, A \quad \text{as} \ \ \. n\to \infty\..
$$

The base of exponent we get from this construction is
$c=(\mu_1)^{\mu_1}\ldots(\mu_k)^{\mu_k}r_2$.
However, note that it is easy to multiply $c$ by any positive integer~$N$,
simply by multiplying $f$ by $N^n$. In particular, let $\pid$ be the product
of all primes which are factors of $\mu_1\ldots\mu_kr_2$. Then there exists
some positive integer~$d$, such that $\pid^d$ is a multiple of
$(\mu_1)^{\mu_1}\ldots(\mu_k)^{\mu_k}r_2$.  This implies that
there exists a function $h\in \cF$ with \ts $h(n) \sim A\ts\pid^{dn}$.

Note now, that we can scale all the tiles horizontally by $d$, and scale
$\varepsilon$ by~$d$, to get a new function $f_0(n)$ such that $f_0(dn)=h(n)$.
We may assume that $f_0(n)=0$ when $n$ is not a multiple of $d$, because we can
multiply $f_0$ by the indicator that $d|n$. We can similarly get a function $f_i$
for $i=1,\dots,d-1$, such that $f_i(n)$ is nonzero only when $n=i\mbox{ mod }d$,
and $f_i(nd+i)=\pid^i h(n)$. We have
$$
f(n)=\sum_{i=0}^{d-1}f_i(n)\in\mathcal{F},
$$
and by the way we constructed $f$ we obtain
\ts $f(n) \sim A\ts\pid^{n}$.
Thus, we can take $c=\pid$ or any integer multiple of $\pid$,
as desired. \ $\sq$

\bigskip

{\small

\section{Final Remarks}\label{s:fin}

\subsection{}
The idea of irrational tilings was first introduced by Korn,
who found a bijection between \emph{Baxter permutations}
and tilings of large rectangles with three fixed irrational
rectangles~\cite[$\S$6]{Korn}.

\subsection{}\label{ss:fin-nn}
For Theorem~\ref{t:classical}, much of the credit goes to
Sch\"{u}tzenberger~\cite{Schu} who proved the
equivalence between \emph{regular languages}
and the (weakest in power) \emph{deterministic
finite automata} (DFA).  He used the earlier work of Kleene~(1956)
and the language of semirings; the GF reformulation
in the language of $\nn$-rational functions came later,
see~\cite{SS}.  We refer to~\cite{BR,SS} for a thorough treatment of the
subject and connections to GFs, and to~\cite{Pin} for a more recent survey.

Now, the relationship between (polyomino) tilings of the strip,
regular languages (as well as DFAs) were proved more recently
in~\cite{BL,MSV}.  Theorem~\ref{t:classical} now follows as
combination of these results in several different ways.

Let us mention here that in the usual polyomino tiling setting
there is no height condition, so in fact Theorem~\ref{t:classical}
remains unchanged when rational tiles of smaller height are allowed.
In the irrational tiling setting, the standard ``finite number
of cut paths'' argument fails.  Still, we conjecture that
Theorem~\ref{t:main} also extends to tiles with smaller heights.

\subsection{}\label{ss:fin-asym}
The history of Theorem~\ref{t:growth-DFinite} is somewhat confusing.
In fact, it holds for integer \emph{$G$-sequences} defined as
integer \emph{$D$-finite $($holonomic$)$ sequences}
with at most at most exponential growth.  It is stated in this form
since diagonals of all rational GFs are $D$-finite~\cite{Ges1}
and at most exponential.  We refer to~\cite{FS,Sta} for more on
$D$-finite sequences, examples and applications, and to~\cite{DGS,Gar}
for $G$-sequences.

The asymptotics of $D$-finite GFs go back to Birkhoff and
Trjitzinsky (1932), and Turrittin (1960).
See~\cite[$\S$VIII.7]{FS} and~\cite[$\S$9.2]{Odl}
for various formulations of general asymptotic estimates,
and an extensive discussion of priority and validity issues.
However, for $G$-sequences, the result seems to be
accepted and well understood, see~\cite[$\S$2.2]{BRS}
and~\cite{Gar}.

\subsection{}\label{ss:fin-algebraic}
Note that $D$-finite sequences can be superexponential, e.g.~$n!$.
They can also have \ts $\exp(n^\ga)$ terms with $\ga \in \qqq$,
e.g.~the number $a_n$ of involutions in~$S_n$~:
$$
a_n \, \sim \,  2^{-1/2} e^{-1/4} \left(\frac{n}{e}\right)^{n/2} e^{\sqrt{n}}\ts,
$$
(see~\cite{Sta} and {\tt A000085} in~\cite{OEIS}).

In notation of Theorem~\ref{t:growth-DFinite}, the $\al \in\qqq$
conclusion cannot be substantially strengthened even for $k=2$ variables.
To understand this, recall Furstenberg's theorem (see~\cite[$\S$6.3]{Sta})
that every algebraic function is a diagonal of $P(x,y)/Q(x,y)$, and that
by Theorem~2 in~\cite{BD} there exist algebraic functions with
asymptotics $A\ts \la^n \ts n^\al$, for all $\al\in \qqq \sm \{-1,-2,\ldots\}$.
For example, the number $g(n)$ of \emph{Gessel walks} (see {\tt A135404}
in~\cite{OEIS}) is famously algebraic~\cite{BK}, and has asymptotics
$$
g(n) \, \sim \, \frac{ 2^{2/3}\ts\Ga(\frac13)}{3\ts\pi} \. 16^n \ts n^{-7/3}\ts.
$$

\subsection{}\label{ss:fin-two-ways}  There is more than one way
a sequence can be a diagonal of a rational function.  For example,
the Catalan numbers $C_n$ are the diagonals of
$$
\frac{1-x/y}{1-x-y} \quad \, \text{and} \quad
\frac{y\ts(1-2xy-2xy^2)}{1-x-2xy-xy^2}\,.
$$
The former follows from $\ts C_n= \binom{2n}{n}-\binom{2n}{n-1}$\ts, while
the second is given in~\cite{RY}.

\subsection{}\label{ss:fin-multisums}
There is a vast literature on binomials sums and multisums,
both classical and modern, see e.g.~\cite{PWZ,Rio}.
It was shown by Zeilberger~\cite{Zei} (see also~\cite{WZ}),
that under certain restrictions, the
resulting functions are $D$-finite, a crucial discovery
which paved a way to \emph{WZ~algorithm}, see~\cite{PWZ,WZ}.
A subclass of \emph{balanced multisums}, related but larger 
than~$\cBB$, was defined and studied in~\cite{Gar}. 
Note that the positivity is the not the only constraint we add.  
For example, balanced multisums in~\cite{Gar} easily contain 
Catalan numbers:  
$$\frac{(2n)! \. 1!}{n!\ts (n+1)!} \. = \. C_n\..
$$
We refer to~\cite[$\S$5.1]{B+} and~\cite{BLS} for the recent
investigations of binomial multisums which are diagonals
of rational functions, but without $\nn$-rationality
restriction. 

\subsection{}\label{ss:fin-gessel}
The class $\cR_1$ of $\nn$-rational GFs does not contain \emph{all}
of~$\nn[[x]]$ (Berstel,~1971); see~\cite{BR2,Ges} for some examples.
These are rare, however;  e.g.~Koutschan investigated
``about 60'' nonnegative rational GFs from~\cite{OEIS},
and found all of them to be in $\cR_1$, see~\cite[$\S$4.4]{Kou}.
In fact, there is a complete characterization of~$\cR_1$
by analytic means, via the Berstel~(1971) and Soittola~(1976) theorems.
We refer to~\cite{BR,SS} for these results and further references, and
to~\cite{Ges} for a friendly introduction.

Unfortunately, there is no such characterization of~$\cN$, nor
we expect there to be one, as singularities in higher dimensions
are most daunting~\cite{FS,PW}.  Even the most natural questions
remain open in that case (cf.~Conjecture~\ref{conj:growth}).
Here is one such question.

\begin{op}\label{op:gesel-comb-int}
Let $f \in \cF$ such that the corresponding {\rm GF} \. $F(x) \in \nn[[x]]$.
Does it follow that $f \in \cF_1$?
\end{op}

Personally, we favor a negative answer.  In~\cite{Ges}, Gessel asks whether
there are (nonnegative) rational GF which have a \emph{combinatorial
interpretation}, but are not $\nn$-rational.  Thus, a negative
answer to Problem~\ref{op:gesel-comb-int} would give a positive answer
to Gessel's question.\footnote{Christophe Reutenauer writes to us
that according the ``general metamathematical principle
that goes back to Sch\"utzenberger'' (see~\cite[p.~149]{BR2}),
the logic must be reversed:  a negative answer to
Gessel's question implies that the answer to
Problem~\ref{op:gesel-comb-int} must be positive.}
Of course, what's a combinatorial interpretation
is in the eye of the beholder; here we are implicitly assuming that our
irrational tilings or paths in graphs (see $\S$\ref{ss:enumR-cycles})
are \emph{nice} enough to pass this test (cf.~\cite{Ges}).  We plan
to revisit this problem in the future.

\subsection{}
There are over 200 different combinatorial interpretation of Catalan
numbers~\cite{Sta2}, some of them 1-dimensional such as the \emph{ballot
sequences}.  A quick review suggests that none of them can be
verified with a bounded memory read only TM.  For example, for the
ballot 0--1 sequences one must remember the running differences
$(\#0 \.- \.\#1)$, which can be large.  This gives some informal support
in favor of our Conjecture~\ref{conj:cat}.
Let us make following, highly speculative and
priceless claim.\footnote{Cf.\, {\tt http://tinyurl.com/mc3h8tn}.}

\begin{conj} \label{conj:cat-asym}
There is no tile counting function $f\in \cF$ which
is \emph{asymptotically Catalan:}
$$
f(n) \, \sim \, C_n \quad \ \text{as} \quad n \to \infty\ts.
$$
\end{conj}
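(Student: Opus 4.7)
The plan is to reduce the conjecture to a rigidity statement about asymptotic constants of $\nn$-rational diagonals, and then attempt to rule out the value $1/\sqrt{\pi}$ by an algebraic-independence argument. Suppose, for contradiction, that $f\in\cF$ satisfies $f(n)\sim C_n$. By Main Theorem~\ref{t:main-countR}, $f$ is the diagonal of an $\nn$-rational function, hence $D$-finite with at most exponential growth. By Theorem~\ref{t:growth-DFinite}, there exist $m\ge 1$ and constants $A_i,\la_i,\al_i,\be_i$ such that on each residue class $n\equiv i\bmod m$ we have $f(n)\sim A_i\la_i^n n^{\al_i}(\log n)^{\be_i}$. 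Matching against $C_n\sim \pi^{-1/2}\,4^n\,n^{-3/2}$, which is positive on every residue class, forces $\la_i=4$, $\al_i=-3/2$, $\be_i=0$ and $A_i=1/\sqrt{\pi}$ for every~$i$. Hence the conjecture reduces to the claim that $1/\sqrt{\pi}$ cannot occur as a leading constant of any $\nn$-rational diagonal with parameters $(4,-3/2,0)$.

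The main obstacle is that none of the invariants introduced so far can separate $1/\sqrt{\pi}$ from constants that are known to occur. Proposition~\ref{cat4} produces tile counting functions whose leading constants at the target parameters are arbitrarily close to $1/\sqrt{\pi}$, so no approximation argument can work; Propositions~\ref{cat6} and~\ref{cat7} rule out mod-$m$ and $p$-adic attacks; and Theorem~\ref{t:hypo} shows that transcendental $\Gamma$-value ratios do appear as leading constants in~$\cF$, so pure transcendence is insufficient. Any proof must therefore exploit some fine arithmetic structure of the achievable leading constants at the fixed triple $(\la,\al,\be)=(4,-3/2,0)$.

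The route I would attempt goes through the balanced-multisum description $\cF=\cBB$ (Theorem~\ref{t:main-cbb}). Writing $f=g_1+\cdots+g_k$ as a finite sum of balanced positive multisums and applying multivariate saddle-point asymptotics (Pemantle--Wilson) to each $g_j$, one extracts a leading constant $A_j$ expressible as an algebraic number times the reciprocal square root of a Hessian determinant at a critical point of an explicit concave function over a rational polyhedron. The goal would be a closure theorem of the following shape: for $(\la_j,\al_j,\be_j)=(4,-3/2,0)$, every such $A_j$ lies in a fixed $\overline\qqq$-algebra $\mathbb{A}\subset\rr$ containing the constants produced by the constructions of Section~\ref{s:app}---in particular, the value $3\sqrt{3}/\pi^{3/2}$ extracted from Lemma~\ref{l:cat3}---and closed under addition. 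If one can then prove, by Nesterenko-style transcendence results for $\Gamma(1/2)$ over periods built from $\pi$ and algebraic numbers, that $1/\sqrt{\pi}\notin\mathbb{A}$, the desired contradiction follows.

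The hard part will be the closure theorem. It requires a structural rigidity result for asymptotics of $\nn$-rational diagonals far beyond what is currently available (cf.\ $\S$\ref{ss:fin-asym}), and in particular a precise control over which algebraic combinations of Hessian square roots can arise at a fixed polynomial exponent. Both the formulation and the proof of such a result appear to be genuinely open, which is why the authors, and we, present Conjecture~\ref{conj:cat-asym} as speculative.
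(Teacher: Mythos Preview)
The statement is a \emph{conjecture}, and the paper does not prove it; indeed the authors call it ``highly speculative'' and explain immediately afterward that their own attempts to disprove it via Theorem~\ref{t:hypo} failed because their hypergeometric constructions ``seem too specialized to give value~$\pi$.'' So there is no proof in the paper to compare your proposal against.

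Your write-up is not a proof either, and you say so honestly in the final paragraph. What you have produced is a thoughtful heuristic reduction: you correctly use Theorem~\ref{t:growth-DFinite} to pin down the parameters $(\la,\al,\be)=(4,-3/2,0)$ and isolate the single obstruction $A=1/\sqrt{\pi}$, and you correctly note that Propositions~\ref{cat4}--\ref{cat7} and Theorem~\ref{t:hypo} block the naive approximation, congruence, $p$-adic, and transcendence approaches. This is exactly the content of the paper's own discussion surrounding the conjecture, just organized more systematically. Your proposed route through $\cBB$ and Pemantle--Wilson saddle-point constants is reasonable as a research program, but the ``closure theorem'' you need---that all achievable leading constants at $(4,-3/2,0)$ lie in some explicit $\overline\qqq$-algebra $\mathbb{A}$ not containing $1/\sqrt{\pi}$---is, as you acknowledge, not a lemma you can prove but the entire open problem restated. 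In short: your assessment of the situation is accurate, and matches the paper's, but nothing here constitutes a proof.
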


We initially tried to disprove the conjecture.
Recall that by Lemma~\ref{l:cat3} and the technology in
Section~\ref{s:tech-apps}, it suffices to obtain
the constant $\ts\frac{\pi}{3\sqrt{3}}\ts$ as the product
of values of the hypergeometric functions given in Theorem~\ref{t:hypo}.
While $\ts\frac{1}{3\sqrt{3}}\ts$ is easy to obtain,
our hypergeometric sums seem too specialized to give value~$\pi$.
This is somewhat similar to the conjecture that $\frac{1}\pi$
is not a \emph{period}~\cite{KZ}.

We should mention here that it is rare when we can say anything
at all about the constant~$A$ in Conjecture~\ref{conj:growth}.
The constant in Corollary~\ref{c:hypo-transc-6} is an exception:
is known to be transcendental by the celebrated 1996 result of
Nesterenko on algebraic independence of $\pi$ and $\Ga(\frac14)$,
see~\cite{NP}.

\subsection{}\label{ss:fin-la}
The proof of Lemma~\ref{l:fin} uses a
generalization of a standard argument in combinatorial
linear algebra, for computing the number of rational tilings:
$$
F(x) \, = \, \sum_{n=0}^\infty \. f_T(n) \ts x^n \,
= \, \sum_{n=0}^\infty \. \left(M^n\right)_{00} \. x^n
\, = \,  \left(\frac{1}{1-M \ts x}\right)_{00} \, = \,
\frac{\det(1-M^{00} \ts x)}{\det(1-M \ts x)} \ts,
$$
where $M$ is the weighted adjacency matrix of~$G_T$.  It is thus
not surprising that we use a cycle decomposition argument somewhat
similar but more general than that in~\cite{CF,KP}.

In the same vein, the ``well-defined multiplicities'' argument in
the proof of Lemma~\ref{l:welld} is similar to the ``cycle popping''
argument in~\cite{Wil} (see also~\cite{GP1,Mar}).
The details are quite different, however.



\subsection{}\label{ss:fin-ord}
The values $\ts \text{ord}_p(C_n)$ \ts in Proposition~\ref{cat7}
were computed by Kummer (1852);
see~\cite{DS} for a recent combinatorial proof.

\vskip.6cm


\nin
{\bf Acknowledgements.} \ts We are grateful to Cyril Banderier,
Michael Drmota,  Ira Gessel, Marni Mishna, Cris Moore, Greta Panova,
Robin Pemantle, Christophe Reutenauer, Bruno Salvy, Andy Soffer,
and Richard Stanley for their interesting comments and generous
help with the references.  The first author was partially
supported by the University of California
Eugene V.~Cota-Robles Fellowship; the second author was partially
supported by the~NSF.

}


\vskip1.5cm

\end{document}